\documentclass{amsart}              
\usepackage{amssymb,amsmath}
\usepackage[T1]{fontenc}
\usepackage[utf8]{inputenc}
\usepackage{enumitem}
\usepackage{hyperref}
\usepackage[labelfont=rm]{caption}
\usepackage[defaultlines=3,all]{nowidow}
\usepackage{tikz}

\usepackage{pgf}
\usepackage{xcolor}
\usepackage{nicematrix}

\usetikzlibrary{braids,
shapes, matrix, arrows, knots, arrows.meta, 3d, pgfplots.groupplots, angles, ext.transformations.mirror,
calc, decorations.markings, intersections, cd}

\DeclareMathOperator{\sing}{Sing}

\DeclareMathOperator{\disc}{disc}
\DeclareMathOperator{\ctr}{Center}
\DeclareMathOperator{\aut}{Aut}
\newcommand{\bc}{\mathbb{C}}
\newcommand{\bp}{\mathbb{P}}
\newcommand{\br}{\mathbb{R}}
\newcommand{\aff}{\text{\rm aff}}
\newcommand{\proy}{\text{\rm proj}}
\newcommand{\abs}[1]{\left\lvert #1\right\rvert}
\newcommand{\ns}[1]{\left\langle\!\left\langle #1\right\rangle\!\right\rangle}
\newtheorem{thm}{Theorem}[section]
\newtheorem{prop}[thm]{Proposition}
\newtheorem{propdef}[thm]{Proposition-Definition}
\newtheorem{cor}[thm]{Corollary}
\newtheorem{lema}[thm]{Lemma}

\theoremstyle{definition}
\newtheorem{dfn}[thm]{Definition}

\newtheorem{ejm}[thm]{Example}

\theoremstyle{remark}
\newtheorem{remark}[thm]{Remark}

\numberwithin{equation}{section}

\begin{document}
\title[Topology of curves]{Topology of complex plane curves: braid monodromy, local and global problems}    \author[E. Artal]{Enrique Artal Bartolo}
\address[E.~Artal]{Departamento de Matem\'{a}ticas, IUMA \\
Universidad de Zaragoza \\
C.~Pedro Cerbuna 12, 50009, Zaragoza, Spain}
\urladdr{\url{http://riemann.unizar.es/~artal}}
\email{\href{mailto:artal@unizar.es}{artal@unizar.es}}

\thanks{\noindent Partially supported by PID2024-156181NB-C33 funded by 
MICIU/AEI/10.13039/ 501100011033 and by FEDER, UE}
\date{\today}

\dedicatory{A mi amigo Alejandro con el que he construido m{\'a}s que matem{\'a}ticas.}

\subjclass[2020]{Primary 14H30, 14H20, 14H50, 14D05, 20F36.}

\keywords{Braid monodromy, plane curve singularities, fundamental group}

\date{}

\begin{abstract}
The embeddings of complex plane projective curves in the plane 
are a cornerstone of the topological study of algebraic varieties.
In this work, we deal with the local and global aspects of these 
embeddings, with a special attention to its historical progress.
\end{abstract}

\maketitle
\thispagestyle{empty}

\section*{Introduction}

Topology of algebraic varieties is an active field of study since the nineteenth century. Classical Riemann works showed that in the one-dimensional case, 
algebraic varieties can be seen as complex \emph{multivalued} functions with some ramification points, leading to a starting point
of an induction process. The development of the theory of fundamental groups and coverings showed that this is equivalent to the study 
of finite representations of free groups, since they are the fundamental groups of the punctured Riemann sphere or the punctured complex line.

At the beginning of the twentieth century, Lefschetz~\cite{Lef:50} added some strong bricks for this induction process: homological invariants
up to some dimension could be studied using \emph{generic} hyperplane sections. While the homotopic counterpart of these results must wait for several
decades, the particular case of fundamental group is settled quite soon by \cite{Zariski1937} (see~\cite{HL:73} for a modern proof):
\emph{The fundamental group of a (quasi-)projective smooth variety
of dimension $n$ coincides
with the one of its hyperplane section if $n>2$; the injection morphism is surjective on fundamental groups if $n=2$. }

The combination
of this result with the projection techniques yields Zariski's foundational paper~\cite{Zariski1929}.
In the  twenties and thirties
there is an active development on this topic. Van Kampen~\cite{Kampen1933} formalized Zariski's great ideas and 
set the groundwork 
for the celebrated Seifert-van Kampen Theorem for the computation of fundamental groups. It is not an isolated case that the
techniques used in this field became standard in geometric topology. Zariski has a series of papers 
that made a significant
progress in the study
of the topology of plane curve complements, which became a subject on its own.

As a summary we present some of Zariski's results. In~\cite{Zariski1931}, he relates the first homology groups of cyclic covers 
of the plane ramified along a curve with the \emph{position} of its singularities; these key ideas were developed much later by other
authors~\cite{Esnault1982, Esnault1982a, Libgober1982a, Esnault1987, Loeser1990, Artal1994}, see~\cite{li:11} for a more complete survey on Alexander invariants.
This paper has its origins in the so-called Zariski sextics of~\cite{Zariski1929}, where the fact that their six cusps
were on a conic seemed to have a topological consequence, as it was confirmed in~\cite{Zariski1931}.
There is a less known paper~\cite{zr:29a} which also deals with cyclic covers. If one looks carefully, what Zariski
states in loc.~cit. in modern terms looks like this: \emph{the Alexander polynomial of an irreducible projective curve 
whose degree is a prime power is~$1$}. Actually, Zariski's proof implies a stronger
result: \emph{the Alexander polynomial of an irreducible curve 
cannot have roots of unity whose order is a prime power}.

A few years later, in~\cite{Zariski1936}, he computed the fundamental group of the complement of maximal cuspidal rational
curves, i.e., the dual of nodal rational curves. An interesting point is their relationship with the braid groups of the $2$-sphere.
Actually, in~\cite{Zariski1937a}, the first presentation of braid groups over a closed orientable surface of genus~$g$ is given.
He computes the fundamental group of the dual of a smooth cubic (a sextic with nine cusps), and using nice deformation
arguments he finds the first example of what will be called \emph{Zariski pairs}, ending a guess started in \cite{Zariski1929}
(where he proved that the fundamental group of the complement of a sextic with six cusps not on a conic is not isomorphic
to this group when the cusps are on a conic).

There is also some activity in the local case, relating algebraic curves with knot theory. An \emph{algebraic knot} is the intersection of an algebraic curve with a small enough ball centered at a point of the curve. Its topological type does not depend on the radius (if small enough)
and it detects if a point is singular or smooth. Wirtinger~\cite{wirt:05} gave a method to compute the fundamental group of an algebraic knot
which has become standard for arbitrary knots; the relationship with iterated torus knots was studied by Brauner~\cite{Brauner1928}.
A nice exposition for algebraic knots and links is given in~\cite{Brieskorn1986}; another nice topological exposition~\cite{Michel-Weber}
by Michel and Weber is unfortunately unpublished.

In the thirties, O.~Chisini~\cite{Chisini1933} realized that the ideas of Zariski and van Kampen contained a stronger invariant
than the fundamental group, which became the so-called \emph{braid monodromy} by B.~Moishezon, who applied it to the topological
study of complex surfaces, see~\cite{Moi:81} and later works. The topological aspects of braid monodromy were also developed
by A.~Libgober~\cite{Libgober1986}, see Theorem~\ref{thm:lib}, from a homotopic point of view. Later on, Kulikov and Teicher~\cite{kt:00}
(for curves with double points), and Carmona~\cite{Carmona2003} (in the general case), showed that braid monodromy determined
the topology of the embedding of a curve in the plane. In a joint work with J.~Carmona, J.I. Cogolludo~\cite{acc:03, acc:07} (and in other work~\cite{accm:05} with M.~Marco), we proved a kind of converse which was used to prove 
the existence of new Zariski pairs (including some of them which cannot be distinguished algebraically).

One of the main problems of braid monodromy is that it is extremely hard to compute. Salvetti~\cite{Salvetti1988} gave a general method
for its construction for the complexification of real line arrangements. A related invariant, the \emph{wiring diagram}, was developed by Arvola
to be used for general line arrangements, but it is more difficult to be used, see e.g~\cite{accm:07, gb:16}. 
Starting from~\cite{Carmona2003}, a computer program was built for its computation, see also~\cite{Bessis2005} for another approach. 
From these ideas, Marco and Rodr{\'i}guez~\cite{mr:16} constructed a new method implemented in \texttt{Sagemath}~\cite{Sagemath} which guarantees the correctness of the output, when produced, though it uses numerical methods.

This paper is organized as follows. In~\S\ref{sec:braid}, we introduce the main definitions and results about braid groups.
There is a geometric action of braid groups on free groups seen as fundamental groups of punctured real planes which is crucial 
for braid monodromy; the main properties of this action and its notation is settled in~\S\ref{sec:free}.
In \S\ref{sec:local} we show the local structure of braid monodromy related with Weierstra{\ss} and Puiseux theory.
In the short section~\S\ref{sec:semilocal}, it is explained how to \emph{glue} the local braids of several singular points
of the projection in the same vertical line. In \S\ref{sec:global} we define the braid monodromy for the projection of an affine curve
with no vertical asymptotes, without any further genericity conditions; Hurwitz moves are defined.
Generic braid monodromy factorizations for projective curves are defined in \S\ref{sec:proj}; the relationship between the fundamental
group of the complement of a projective curve and a generic affinization is given.
In the last section \S\ref{sec:app}, some applications are given, mainly braid monodromy factorizations associated to Kummer covers and some words about braid monodromy factorizations of symplectic curves with special emphasis on some results by Moishezon.
 \section{Braids}\label{sec:braid}

The foundation of braid theory is laid in \cite{Artin1925, Artin1947} by Artin, 
where the 
classic presentation of the braid group~$\mathbb{B}_n$ is given:
\begin{equation}
\label{eq:artin}
\left\langle
\sigma_1,\dots,\sigma_{n-1}
\middle|\,
\underset{j-i>1}{[\sigma_i,\sigma_j]=1},\
\underset{1\leq i<n-1}{\sigma_i\cdot\sigma_{i+1}\cdot\sigma_i=\sigma_{i+1}\cdot\sigma_i\cdot\sigma_{i+1}}
\right\rangle.
\end{equation}
Braids can be seen as homotopy classes of sets of continuous maps
$\{\gamma_i:[0,1]\to\mathbb{C}_{i=1}^n\}$ such that $\{\gamma_i(t_0)\mid i=1,\dots,n\}=\{1,\dots,n\}$
for $t_0\in\{0,1\}$, and $\gamma_i(t)\neq\gamma_j(t)$ if $i\neq j$, $\forall t\in [0,1]$.
They are represented by their graphs in $\mathbb{C}\times[0,1]$. 
It is customary
to represent them in $\br\times[0,1]$ using the projection $\mathbb{C}\to\mathbb{R}$ by taking the real part.
The overpass of a crossing corresponds to the \emph{strand} with smaller imaginary part.
The standard Artin generators are shown in Figure~\ref{fig:gen}. 
The product of braids are done by putting one above the other and rescaling. It is clear that 
after a homotopy one can get only double points, which shows that $\sigma_1,\dots,\sigma_{n-1}$
are indeed a generator system.

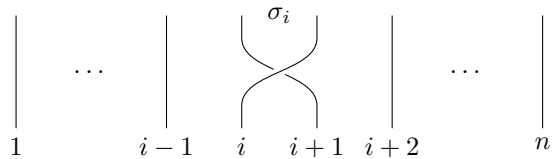
\begin{figure}[ht]
\begin{tikzpicture}
\pic[yscale=-1,
braid/.cd,
number of strands=4] {braid={s_2}};
\node[below] at (-2, 0) {$1$};
\node[below] at (0, 0) {$i-1$};
\node[below] at (1, 0) {$i$};
\node[below] at (2, 0) {$i+1$};
\node[below] at (3, 0) {$i+2$};
\node[below] at (5, 0) {$n$};
\draw (-2,0) -- (-2, 1.5);
\draw (5,0) -- (5, 1.5);
\node at (4, .75) {$\dots$};
\node at (-1, .75) {$\dots$};
\node at (1.5, 1.5) {$\sigma_i$};
\end{tikzpicture}
 \caption{Standard Artin generator}
\label{fig:gen}
\end{figure}

It is easy to see that the generators satisfy the relations in \eqref{eq:artin}, see
Figure~\ref{fig:rel} to see how they look. 
There is a natural morphism $\mathbb{B}_n\to\mathfrak{S}_n$ (the $n$-symmetric group) given 
by the permutation of the end points; in terms of the generators, the map is given by
$\sigma_i\mapsto(i, i+1)$. Its kernel $\mathbb{P}_n$ is the \emph{pure braid group}
and it consists of the braids such that $\gamma_i(0)=\gamma_i(1)$, which can be assumed
to be~$i$ after a reordering.

\begin{figure}[ht]
\begin{tikzpicture}[scale=.8]
\pic[yscale=-1,
braid/.cd,
number of strands=4] {braid={s_1 s_3}};
\node at (1.9,-.5) {$\sigma_1\cdot\sigma_3$};

\node at (4.2,1.25) {$=$};

\begin{scope}[xshift=4.5cm]
\pic[yscale=-1,
braid/.cd,
number of strands=4] {braid={s_3 s_1}};
\node at (1.9,-.5) {$\sigma_3\cdot\sigma_1$};
\end{scope}

\begin{scope}[xshift=9cm]
\pic[yscale=-1,
braid/.cd,
number of strands=3] {braid={s_1 s_2 s_1}};
\node at (1.1,-.5) {$\sigma_1\cdot\sigma_2\cdot\sigma_1$};
\node at (2.75,1.75) {$=$};
\end{scope}

\begin{scope}[xshift=12cm]
\pic[yscale=-1,
braid/.cd,
number of strands=3] {braid={s_2 s_1 s_2}};
\node at (1.1,-.5) {$\sigma_2\cdot\sigma_1\cdot\sigma_2$};
\end{scope}

\end{tikzpicture}
 \caption{Artin relations}
\label{fig:rel}
\end{figure}
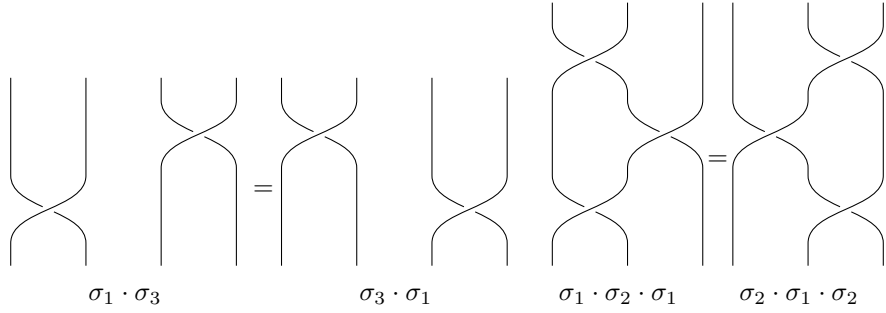

It is useful to interpret the braid and pure braid groups as fundamental groups.
Let
$X_n:=\{(x_1,\dots,x_n)\in\bc^n\mid \#\{x_1,\dots,x_n\}=n\}$,
the complement of the braid arrangement. The symmetric group
$\mathfrak{S}_n$ acts naturally on $X_n$ and we are interested in
its quotient $Y_n:=X_n/\mathfrak{S}_n$.
Let us interpret $Y_n$ in an another way. Let
\[
V_n:=\{p(t)\in\bc[t]\mid\deg p=n, p \text{ monic}\}.
\]
This space can be identified with $\bc^n$, via the coefficients. The map
\[
\begin{tikzcd}[row sep=0,/tikz/column 1/.append style={anchor=base east},/tikz/column 2/.append style={anchor=base west}]
\bc^n\rar["\Phi"] & V_n\\
(x_1,\dots,x_n)\rar[mapsto]&\displaystyle\prod_{j=1}^n (t -x_i)
\end{tikzcd}
\]
realizes the quotient map by the action of the symmetric group in $\bc^n$. Let 
\[
T_n:=\{p(t)\in V_n\mid p \text{ has only simple roots}\}.
\]
Then, the restriction $\Phi_{|}:X_n\to T_n$ is a model of the quotient
$X_n\to Y_n$. Note also that $T_n=V_n\setminus D_n$ where $D_n$
is the zero locus of the general discriminant polynomial. 
From now on we will not distinguish between $Y_n$ and $T_n$.
The projective
counterpart is the content of~\cite{Zariski1936}.

In a natural way, given $\mathbf{x}^0:=(x_1^0,\dots,x_n^0)\in X_n$ and
$\{x_1^0,\dots,x_n^0\}\equiv p^0(t):=\Phi(\mathbf{x}^0)\in T_n$, we have that the braid group
and the pure braid group are fundamental groups:
\[
\mathbb{B}_n\cong\pi_1(T_n; p^0),\qquad \mathbb{P}_n\cong\pi_1(X_n; \mathbf{x}^0).
\]
This identification is achieved fixing $\mathbf{x}^0:=(1,\dots,n)$.
There is a way to represent braids in the same way when $\mathbf{x}^0\neq(1,\dots,n)$.
Let us start first with the case where the real parts of $x_1,\dots,x_n$ are pairwise distinct.
In that case the same kind of projections allows to describe 
a braid as word in the standard Artin generators.
For arbitrary $\mathbf{x}^0$, when we have points with the same real part
we slightly deform the projection as in Figure~\ref{fig:proj} such that the bigger imaginary part
is moved to the right. Note that other conventions may work but it is important
to fix one.

\begin{figure}[ht]
\begin{tikzpicture}

\draw[] (0,2) -- (0,-1) -- (10,-1);
\fill (5,0) node[left] {$0$} circle [radius=.1];
\fill (7,0) node[right] {$1$} circle [radius=.1];
\fill (5,2) node[above] {$\sqrt{-1}$} circle [radius=.1];

\draw[->] (-1,.5) node[above] {$\mathbb{C}$} -- (-1,-2)
node[below] {$\mathbb{R}$} ;
\draw (0, -2.2) -- (10,-2.2);
\foreach \x in {.2, .6, ..., 3.6,7.6, 8, ..., 9.6}
{
\draw (\x,-1) -- (\x,2);
\draw[dotted] (\x,-1) -- (\x,-2.2);
}
\foreach \x in {0, .4, ...,1.2}
{
\draw (3.8 + 1.2*\x,-1) -- (3.8 + \x,2);
\draw[dotted] (3.8 + 1.2*\x,-1) -- (3.8 + 1.2 * \x,-2.2);
}
\foreach \x in {2, 1.6, ..., 0 }
{
\draw (7.6-\x,-1) -- (7.6 - 1.1 * \x,2);
\draw[dotted] (7.6-\x,-1) -- (7.6 - \x,-2.2);
}
\end{tikzpicture}
 \caption{Conventions about projections}
\label{fig:proj}
\end{figure}
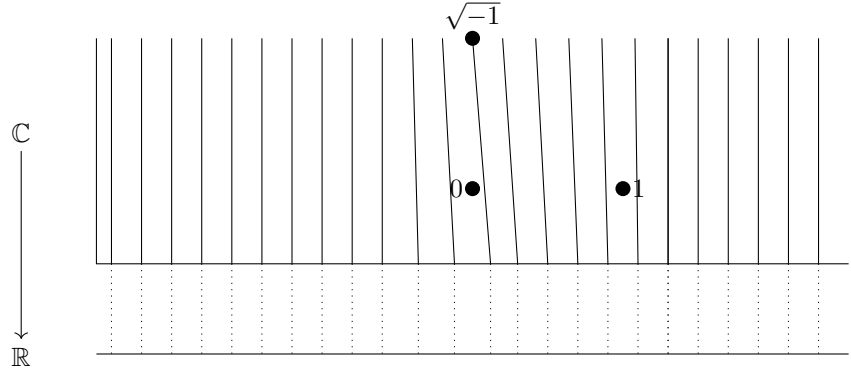

The same conventions can be applied when we consider an element
of the fundamental grupoid of $Y_n$, i.e., the homotopy
class of a path in $Y_n$ where the end points are not equal.
We can then identify any braid (with equal or distinct end points)
with an element in $\mathbb{B}_n$ as $\pi_1(Y_n;\{1,\dots,n\})$. 

\begin{remark}\label{rem:lex}
The above identification can also be understood from the choice of 
fixed homotopy classes of paths between elements of $Y_n$.
We index lexicographically $\{x_1,\dots,x_n\}$:
\[
\Re x_j< \Re x_{j+1}\text{ or } \Re x_j = \Re x_{j+1}\text{ and }\Im x_j < \Im x_{j+1}.
\]
Once such a braid is generically drawn using this projection, the order of the crossings
allows to write a word in the Artin generators.
\end{remark}

There is another way to identify $\pi_1(Y_n;\{x_1,\dots,x_n\})$ with 
$\mathbb{B}_n$, adding some extra data. We start an order of $\{x_1,\dots,x_n\}$ and 
we choose a 
piecewise $\mathcal{C}^{(1)}$ simple path $\Gamma$ from $x_1$ to $x_n$ passing through
all the points in the given order which is $\mathcal{C}^{(1)}$ at $x_i$. We denote by $\Gamma_i$, $i=1,\dots,n-1$, the subpaths
from $x_i$ to $x_{i+1}$.

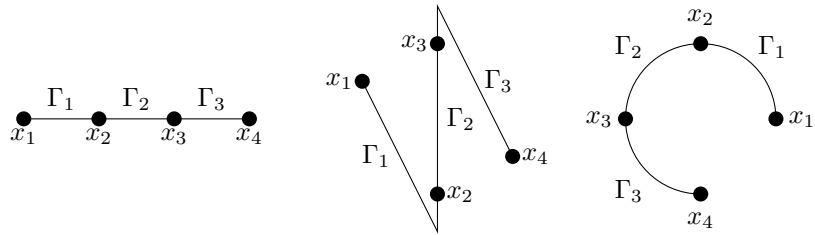
\begin{figure}[ht]
\centering
\begin{tikzpicture}
\begin{scope}
\draw (1,0) -- (4,0);
\foreach \x in {1,...,4}
{
\fill (\x,0) node[below] {$x_\x$} circle [radius=.1cm];
}
\foreach \x in {1,...,3}
{
\node[above] at ({\x+.5},0) {$\Gamma_\x$};
}
\end{scope}

\begin{scope}[xshift=4.5cm]
\coordinate (A1) at (1, 1/2);
\coordinate (A2) at (2, -1);
\coordinate (A3) at (2, 1);
\coordinate (A4) at (3, -1/2);
\coordinate (B1) at (1, 1/2);
\coordinate (B2) at (2, -1.5);
\coordinate (B3) at (2, 1.5);
\coordinate (B4) at (3, -1/2);
\fill (A1) node[left] {$x_1$} circle [radius=.1cm];
\fill (A2) node[right] {$x_2$} circle [radius=.1cm];
\fill (A3) node[left] {$x_3$} circle [radius=.1cm];
\fill (A4) node[right] {$x_4$} circle [radius=.1cm];
\draw (B1) -- node[left, pos=.5] {$\Gamma_1$} (B2)
-- node[right] {$\Gamma_2$}(B3)
-- node[right] {$\Gamma_3$}(B4);
\end{scope}

\begin{scope}[xshift=10cm]
\draw (1,0) arc [start angle=0,end angle=270,radius=1cm];
\foreach \x in {1,...,4}
{
\coordinate (X\x) at ({90*(\x -1)}:1);
\coordinate (Y\x) at ({90*(\x -1)+45}:1.35);
\fill (X\x) circle [radius=.1cm];
\node at ($1.35*(X\x)$) {$x_\x$};
}
\foreach \x in {1,...,3}
{
\node at (Y\x) {$\Gamma_\x$};
}
\draw ;
\end{scope}

\end{tikzpicture}
 \caption{Examples of paths $\Gamma$.}
\label{fig:path}
\end{figure}

The identification is done using the following braids for $\sigma_i$
as paths $\gamma:=(\gamma_1,\dots,\gamma_n)\!:\![0,1]\to X_n$ defining loops in $Y_n$, where $\gamma_j$ is constant for $j\neq i, i+1$, $\gamma_i$ runs from $x_i$ to $x_{i+1}$ to the \emph{right}
of $\Gamma_i$ and $\gamma_{i+1}$ runs from $x_{i+1}$ to $x_i$ to the \emph{left}
of $\Gamma_i$. The usual identification of $\mathbb{B}_n$ with $\pi_1(Y_n;\{1,\dots,n\})$
corresponds to the left-hand side of Figure~\ref{fig:path}; the middle part corresponds
with the identification with $\pi_1(Y_n;\{x_1,\dots,x_n\})$ using the lexicographic order 
as in Remark~\ref{rem:lex}. The picture in the right-hand side shows another path
which will be useful later.

 \section{Free Groups}\label{sec:free}

One main feature of free groups (with a fixed basis) is their connection with the topology of a punctured plane.
Fix $\mathbf{x}:=(x_1,\dots,x_n)\in X_n$ (the order will somehow be important); in order to not overload
the notation the set $\{x_1,\dots,x_n\}$ will also be denoted as $\mathbf{x}$ for short.
For $x_0\in\bc\setminus\mathbf{x}$,  
$\mathbb{F}_n:=\pi_1(\bc\setminus\mathbf{x};x_0)$ is a free group and we are going to describe a natural family of bases.

Before recalling some classical definitions, we introduce a notation. 
By $\mathbb{D}_r(x_i)$ (resp. $\mathbb{S}_r(x_i)$) we mean the disk (resp. the circle)
of radius~$r$ centered at~$x_i\in\mathbb{C}$; if the center is~$0$ they will be
denoted as $\mathbb{D}_r$ and $\mathbb{S}_r$.

\begin{dfn}\label{def:meridian}
An element $\mu\in\mathbb{F}_n$ is a \emph{meridian} of $x_i$ if it admits a representative as follows.
Let $\alpha$ be a path joining $x_0$ and $x'_i\in\partial\mathbb{D}_r(x_i)$ in 
$\mathbb{C}\setminus\mathbf{x}$, such that $x_j\notin\mathbb{D}_r(x_i)$
if $j\neq i$. Let $\delta$ be the loop based at $x'_i$ that runs counterclockwise along $\partial\mathbb{D}_r(x_i)$.
Then $\mu$ is the class of $\alpha\cdot\delta\cdot\alpha^{-1}$ (where $\alpha^{-1}$ refers to the opposite path to $\alpha$). The pair $(\alpha,\delta)$
is called a \emph{decomposition of the meridian}~$\mu$. Note that the meridians of $x_i$
form a conjugacy class of $\mathbb{F}_n$.
\end{dfn}

We can see $\bc$ as a subset of $\bp^1\equiv\bc\cup\{\infty\}$. In this way we can define also
the \emph{meridians} of $\infty$. The subpath $\delta$ of a meridian of $\infty$ runs clockwise since the 
positive orientation is defined as the boundary of a \emph{disk} centered at $\infty$. 

\begin{dfn}
An ordered basis $\mu_1,\dots,\mu_n$ of $\mathbb{F}_n$ is called \emph{pseudogeometric}
if there exists a permutation $\sigma\in\mathfrak{S}_n$ such that $\mu_i$ is a meridian
of $x_{i^\sigma}$ and $(\mu_1\cdot\ldots\cdot\mu_n)^{-1}$ is a meridian of~$\infty$.
\end{dfn}

In the case that $x_0$ is in the boundary of a disk $\mathbb{D}$ such that $\mathbf{x}\subset\mathring{\mathbb{D}}$,
there is a special meridian of $\infty$, namely the class of the loop based at $x_0$ that runs 
clockwise along $\partial\mathbb{D}$.

\begin{dfn}
If $x_0$ is in the boundary of a disk $\mathbb{D}$ such that $\mathbf{x}\subset\mathring{\mathbb{D}}$,
an ordered basis $\mu_1,\dots,\mu_n$ of $\mathbb{F}_n$ is called \emph{geometric}
if there exists a permutation $\sigma\in\mathfrak{S}_n$ such that $\mu_i$ is a meridian
of $x_{i^\sigma}$ and $\mu_1\cdot\ldots\cdot\mu_n$ is the class of the loop based at $x_0$ that runs 
counterclockwise along $\partial\mathbb{D}$.
\end{dfn}

\begin{ejm}
Let us fix such a disk $\mathbb{D}$ and $x_0\in\partial\mathbb{D}$; consider also $r>0$ such that 
the disks $\mathbb{D}_r(x_i)$ are pairwise disjoint and contained in $\mathring{\mathbb{D}}$.
Let $\Gamma$ be a path as in Figure~\ref{fig:path} (or Figure~\ref{fig:base}) 
for $\mathbf{x}$ and let $\tilde{\Gamma}_0$ be a path 
in $\mathbb{D}\setminus\Gamma$ joining $x_0$ and $x'_i\in\partial\mathbb{D}_r(x_1)$.
Inductively, for $i\in\{1,\dots,n-1\}$ we consider paths~$\tilde{\Gamma}_i$ (to the \emph{left} of $\Gamma_i$) from 
$x'_i$ to $x'_{i+1}\in\partial\mathbb{D}_r(x_{i+1})$. Let us denote $\delta_i$ the 
loop based at $x'_i$ that runs 
counterclockwise along $\partial\mathbb{D}_r(x_i)$. Then, it is not difficult to check that
\[
\mu_i:=\left(\prod_{j=0}^{i-1}\tilde{\Gamma}_j\right)\cdot\delta_j\cdot\left(\prod_{j=0}^{i-1}\tilde{\Gamma}_j\right)^{-1},\quad 
i=1,\dots,n,
\]
is a geometric basis of $\mathbb{F}_n$.
\end{ejm}

\begin{figure}[ht]
\centering
\begin{tikzpicture}
\coordinate (A0) at (-2,0);
\coordinate (A1) at (-1,1);
\coordinate (A2) at (1,-1);
\coordinate (A3) at (1,1);
\coordinate (A4) at (3,0);
\coordinate (A-1) at ($(A0)-(2,0)$);

\foreach \x in {-1,...,4}
{
\fill (A\x) circle [radius=.1cm];
}
\foreach \x in {0,...,4}
{
\draw (A\x) circle [radius=.25cm];
}
\draw(A-1);
\draw (A-1) node[left] {$x_0$} to[out=0, in=180] node[above] {$\tilde\Gamma_0$} ($(A0)-(.25,0)$);
\draw (A0) node[below=6pt] {$x_1$}
to[out=0, in=180] node[right, pos=.25] {$\Gamma_1$}
(A1) node[above=6pt] {$x_2$}
to[out=0, in=180] node[pos=.5, left] {$\Gamma_2$} (A2) node[below=6pt] {$x_3$} to[out=0, in=-90]node[pos =.5, right] {$\Gamma_3$} (A3) node[below right=6pt] {$x_4$} to[out=90, in=90]node[below] {$\Gamma_4$} (A4) node[right=6pt] {$x_5$};

\draw ($(A0)-(.25,0)$) to[out=90, in=180] node[above left,pos=.5] {$\tilde\Gamma_1$}  ($(A1)+(0,.25)$)
to[out=0, in=180] node[right,pos=.25] {$\tilde\Gamma_2$}  ($(A2)+(0,.25)$)
to[out=0, in=180] node[left,pos=.5] {$\tilde\Gamma_3$}  ($(A3)+(0,.25)$)
to[out=90, in=90] node[right=5pt,pos=.5] {$\tilde\Gamma_4$}  ($(A4)+(0,.25)$);
\end{tikzpicture}
 \caption{\emph{Lexicographic} geometric basis}
\label{fig:base}
\end{figure}
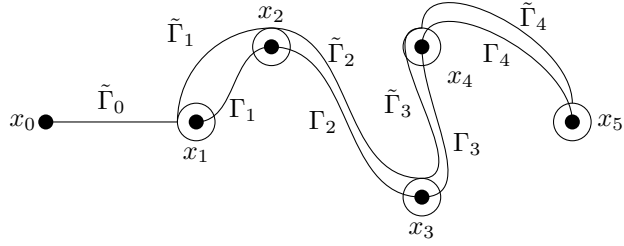

There is a natural action of $\mathbb{B}_n$ on $\mathbb{F}_n$. It can be understood from the fact that
the braid group is also the group of isotopy classes of homeomorphisms of $\bc$ globally fixing 
$\{x_1,\dots,x_n\}$
and being the identity outside of a compact set. There is another interpretation. 
Given a braid $\tau\in\pi_1(Y_n;\mathbf{x})$, let $X_\tau:=(\bc\times[0,1])\setminus\tau$ (where $\tau$ is seen as 
the graph of the $n$-maps which define it). The maps $i_j:\bc\setminus\mathbf{x}\to X_\tau$ defined 
as $i_j(z):=(z,j)$, for $j=0,1$, are homotopy equivalences. Hence, we can consider the composition $\Phi_\tau$
\[
\begin{tikzcd}
\pi_1(\bc\setminus\mathbf{x};x_0)\rar["i_{0,*}"]\ar[rrr, bend right=10pt, "\Phi_\tau" below]&\pi_1(X_\tau;(x_0,0))\rar["\alpha"]&\pi_1(X_\tau;(x_0,1))\rar["i_{1,*}^{-1}"]&\pi_1(\bc\setminus\mathbf{x};x_0)
\end{tikzcd}
\]
where $\alpha$ is the  change of base point isomorphism defined by the path $t\mapsto(x_0,t)$;
it is well defined if the representative of $\tau$ avoids this path, which can always be obtained
either by a homotopy or by a new choice of~$x_0$. 
If $\gamma\in\pi_1(\bc\setminus\mathbf{x};x_0)$ and $\tau\in\pi_1(Y_n;\mathbf{x})$, then 
$\gamma^\tau:=\Phi_\tau(\gamma)$.

If $\pi_1(\bc\setminus\mathbf{x};x_0)$ is identified with $\mathbb{F}_n$ and $\pi_1(Y_n;\mathbf{x})$ 
is identified with $\mathbb{B}_n$ via the same path~$\Gamma$ then this action is given by:
\[
\mu_i^{\sigma_j}=
\begin{cases}
\mu_i\cdot\mu_{i+1}\cdot\mu_i^{-1}&\text{if }i=j\\
\mu_{i-1}&\text{if }i=j+1\\
\mu_i&\text{otherwise.}
\end{cases}
\]
We can put this together as an action
\begin{equation}\label{eq:action}
\begin{tikzcd}[row sep=0pt]
\mathbb{F}_n\times\mathbb{B}_n\rar[]&\mathbb{F}_n\\
(\mu,\tau)\rar[mapsto]&\mu^\tau.
\end{tikzcd}
\end{equation}
This action has two main properties; let $\phi:=\phi_\tau$ be the automorphism defined by the action of $\tau$
\begin{enumerate}[label=\rm(A\arabic{enumi})]
\item\label{a1} If $\rho:\mathbb{B}_n\to\mathfrak{S}_n$ is the natural epimorphism defined by the ends of a braid,
let $\sigma:=\rho(\tau)$. Then
$\phi(\mu_i)$ is a meridian of $x_{i^{\sigma}}$, i.e., a conjugate of $\mu_{i^\sigma}$.
\item\label{a2} $\phi(\mu_1\cdot\ldots\cdot\mu_n)=\mu_1\cdot\ldots\cdot\mu_n$.
\end{enumerate}

The main result in \cite{Artin1947} says that this action defines an anti-monomorphism $\mathbb{B}_n\to\aut\mathbb{F}_n$, whose 
image is formed by the automorphisms satisfying \ref{a1} and \ref{a2} and it can also be reinterpreted as follows. We consider a base point
$x_0$ for which 
the definition of the geometric basis stands.

\begin{thm}\label{thm:artin_action}
Let $\mathcal{B}_n:=\{(\mu_1,\dots,\mu_n)\mid\text{geometric basis of }\mathbb{F}_n\}$. The action of 
$\mathbb{B}_n$ on $\mathbb{F}_n$ induces a free and transitive action on $\mathcal{B}_n$.
\end{thm}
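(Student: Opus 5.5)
The plan is to reduce both freeness and transitivity to Artin's characterization recalled just before the statement: the action gives an anti-monomorphism $\mathbb{B}_n\to\aut\mathbb{F}_n$ whose image is exactly the set of automorphisms satisfying \ref{a1} and \ref{a2}. I would use this characterization as the only real input. Everything else is bookkeeping about geometric bases.

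First I will check that the action preserves $\mathcal{B}_n$. A braid $\tau$ acts through $\phi_\tau$, and $(\mu_1,\dots,\mu_n)^\tau:=(\phi_\tau(\mu_1),\dots,\phi_\tau(\mu_n))$. By \ref{a1}, each $\phi_\tau(\mu_i)$ is again a meridian of some point of $\mathbf{x}$, and the points met are distinct since the permutation is composed with $\rho(\tau)$. By \ref{a2}, the product is still the counterclockwise boundary loop. This step needs an observation: by \ref{a1} and \ref{a2}, $\phi_\tau$ preserves the conjugacy class of every meridian and the boundary class. Applied to an arbitrary geometric basis, written as a conjugacy-class-permuting tuple, this gives the same properties, because $\phi_\tau$ is an automorphism and meridians of $x_j$ form a conjugacy class. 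Alternatively, one can note that the action is geometric, via homeomorphisms of the disk fixing $\partial\mathbb{D}$ and $x_0$, which visibly send meridians to meridians.

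\emph{Freeness.} If $\tau$ fixes a geometric basis $(\mu_1,\dots,\mu_n)$, then $\phi_\tau$ fixes a basis of $\mathbb{F}_n$. Hence $\phi_\tau=\mathrm{id}$, and injectivity of the anti-monomorphism gives $\tau=1$.

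\emph{Transitivity.} Fix the geometric basis $(\mu_1,\dots,\mu_n)$ of the Example, and let $(\nu_1,\dots,\nu_n)$ be any element of $\mathcal{B}_n$. Define $\phi\in\aut\mathbb{F}_n$ by $\mu_i\mapsto\nu_i$; this is an automorphism because both tuples are bases. By definition $\nu_i$ is a meridian of $x_{i^{\sigma'}}$, i.e.\ conjugate to $\mu_{i^{\sigma}}$ for some permutation $\sigma$, so \ref{a1} holds. Moreover $\phi(\mu_1\cdots\mu_n)=\nu_1\cdots\nu_n$ is the boundary loop, which equals $\mu_1\cdots\mu_n$, so \ref{a2} holds. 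By Artin's theorem $\phi=\phi_\tau$ for some $\tau$, and hence $(\mu_i)^\tau=(\nu_i)$.

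The main obstacle is that everything rests on Artin's image characterization. If that characterization could not be taken as given, the hard step would be proving that any automorphism satisfying \ref{a1} and \ref{a2} comes from a braid. For that I would try an induction on the total length of the conjugating words $w_i$ in $\phi(\mu_i)=w_i\mu_{i^\sigma}w_i^{-1}$. The argument would combine condition \ref{a2} with a cancellation argument in the free group to show that some adjacent pair admits a reduction by $\sigma_j^{\pm1}$ that strictly decreases the length. Injectivity would be obtained similarly, or topologically via the Alexander trick on the punctured disk.
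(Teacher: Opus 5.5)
Your proposal is correct and follows the paper's own route: the paper gives no separate proof and presents the theorem as a reinterpretation of Artin's result that $\tau\mapsto\phi_\tau$ is an anti-monomorphism whose image is the set of automorphisms satisfying \ref{a1} and \ref{a2}. Your two steps are the unpacking of that reinterpretation. Freeness holds because a braid fixing a basis acts trivially and is therefore trivial by injectivity. Transitivity holds because the automorphism $\mu_i\mapsto\nu_i$ satisfies \ref{a1} and \ref{a2}, so it equals some $\phi_\tau$. One wording fix: $\phi_\tau$ does not preserve each conjugacy class of meridians but permutes them according to $\rho(\tau)$, and that is all your argument for the stability of $\mathcal{B}_n$ actually uses.
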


 \section{Topology of the germ of a plane curve singularity}
\label{sec:local}

A germ $(C,0)\subset(\bc^2,0)$ is defined as the zero locus 
of a square-free non-constant germ $f(x,y)\in\bc\{x,y\}$,
the ring of convergent power series. 
The analytic study of these germs is based on Weierstra\ss{} theory.
There are many texts where is developed, see e.g. \cite{Griffiths1978, GuRo};
in the case we are interested in, a nice exposition is done in \cite{Gr:89}.

\begin{dfn}
A polynomial $p(x,y)\in\bc\{x\}[y]$ is said to be a \emph{Weierstra\ss{} polynomial}
if it is monic and the other coefficients vanish at $x=0$.
A series $f(x,y)\in\bc\{x,y\}$ is \emph{$y$-regular
of order $n>0$} if  $f(0,y)=y^n u(y)$, where $u(0)\neq0$, i.e., $u$
is a unit in $\bc\{y\}\subset\bc\{x,y\}$.
\end{dfn}

\begin{thm}[Weierstra\ss{} Preparation Theorem, {\cite[p.~8]{Griffiths1978}}]
Let $f(x,y)\in\bc\{x,y\}$ be $y$-regular
of order $n>0$. Then, there exist a
unique Weierstra\ss{} polynomial $p(x,y)\in\bc\{x\}[y]$ of degree~$n$ 
and a unit $u(x,y)$ such that
$f=pu$. In particular the germs of zero loci of $p$ and $f$ coincide.
\end{thm}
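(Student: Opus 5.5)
The plan is the classical Weierstra\ss{} division approach. First I prove a division theorem: if $f$ is $y$-regular of order $n$, then every $g\in\bc\{x,y\}$ can be written uniquely as $g=qf+r$, with $q\in\bc\{x,y\}$ and $r\in\bc\{x\}[y]$ of degree $<n$ in $y$. Applying this to $g=y^n$ gives $y^n=qf+r$. Set $p:=y^n-r$, so that $p=qf$. Then I need three facts. The first is that $q$ is a unit. The second is that $p$ is a Weierstra\ss{} polynomial. The third is uniqueness.

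For the first two, I restrict to $x=0$. This gives $y^n-r(0,y)=q(0,y)\,y^n u(y)$. The left-hand side has $y$-degree exactly $n$, and $r(0,y)$ has degree $<n$. Comparing the lowest-order terms therefore forces $r(0,y)=0$ and $q(0,0)u(0)=1$. Hence $q$ is a unit, and all lower coefficients of $p$ vanish at $x=0$. I then set $u:=q^{-1}$, giving $f=pu$.

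For uniqueness, suppose $f=p'u'$ is another such factorization. Then $y^n=(u')^{-1}f+(y^n-p')$ is a second division of $y^n$ by $f$ with remainder of degree $<n$. Uniqueness of division then gives $p'=p$. The statement about zero loci is immediate, since $u$ does not vanish near the origin.

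The main obstacle is the division theorem itself, specifically the convergence of $q$ and $r$. For this I plan the analytic route via Cauchy integrals. Choose $\rho>0$ such that $f(0,y)$ has only the zero $y=0$ (of multiplicity $n$) in $|y|\le\rho$. By continuity, choose $\varepsilon>0$ such that $f(x,y)\neq0$ for $|x|<\varepsilon$ and $|y|=\rho$. Define
\[
q(x,y):=\frac{1}{2\pi i}\oint_{|\eta|=\rho}\frac{g(x,\eta)}{f(x,\eta)(\eta-y)}\,d\eta
\]
for $|y|<\rho$; this is holomorphic in $(x,y)$. Then set $r:=g-qf$. Writing $r$ by Cauchy's formula gives
\[
r(x,y)=\frac{1}{2\pi i}\oint\frac{g(x,\eta)}{f(x,\eta)}\cdot\frac{f(x,\eta)-f(x,y)}{\eta-y}\,d\eta .
\]
To see that $r$ is a polynomial in $y$, I would first reduce $f$ on the domain. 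Equivalently, and more simply, I apply the construction directly with $f$ replaced by its value along roots: by Rouch\'e, $f(x,\cdot)$ has exactly $n$ roots in the disk, so $\prod(y-\text{roots})$ is already a candidate for $p$. Its coefficients are symmetric functions of the roots, hence power sums
\[
\frac{1}{2\pi i}\oint \eta^k\,\frac{\partial_\eta f}{f}\,d\eta,
\]
which are holomorphic in $x$. I will in fact use this root-product construction as the main proof: it gives the Weierstra\ss{} polynomial $p$ directly, $u:=f/p$ has removable singularities and no zeros, and uniqueness follows because any Weierstra\ss{} factor must have as roots exactly the $n$ zeros of $f(x,\cdot)$ in the small disk.
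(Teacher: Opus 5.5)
Your final root-product argument (argument principle, holomorphic power sums $\frac{1}{2\pi i}\oint \eta^k\,\partial_\eta f/f\,d\eta$, Newton's identities, $p=\prod_i (y-b_i(x))$, uniqueness because the roots of any Weierstra\ss{} factor, with multiplicity, are exactly the $n$ small zeros of $f(x,\cdot)$) is correct and essentially matches the cited proof. The paper gives no proof of its own and simply refers to Griffiths--Harris, p.~8, whose argument is this construction, so your opening division-theorem scaffolding is unnecessary. The one step to write out is the joint holomorphy of $u=f/p$, which the reference gets from $u(x,y)=\frac{1}{2\pi i}\oint_{\abs{\eta}=\rho}\frac{f(x,\eta)}{p(x,\eta)}\,\frac{d\eta}{\eta-y}$ rather than from removability in $y$ for each fixed $x$.
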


This theorem and related ones, such as the Weierstra\ss{} Division Theorem,
have important consequences.

\begin{cor}
\mbox{}
\begin{enumerate}[label=\rm(\roman{enumi})]
\item The ring $\bc\{x,y\}$ is factorial and Noetherian.
\item A Weierstra\ss{} polynomial is irreducible in $\bc\{x,y\}$
if and only if it is irreducible in $\bc\{x\}[y]$.
\item Let $C$ be the germ of the zero locus of $f\in\bc\{x,y\}$.
Then $f$ has only one irreducible factor if and only if $C\setminus\{0\}$
is connected.
\item Let $p\in\bc\{x\}[y]$ be a square-free Weierstra\ss{} polynomial.
Then, there exists $d\geq 0$ such that $\disc_y p\in\bc\{x\}$ is
of the form $x^d v(x)$, $v(0)\neq 0$.
\end{enumerate}
\end{cor}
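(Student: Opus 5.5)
The four items will be derived from the Weierstra\ss{} Preparation Theorem and the Weierstra\ss{} Division Theorem, by induction on the number of variables combined with Gauss-type arguments for $\bc\{x\}[y]$. Throughout, a linear change of coordinates lets us assume that any given nonzero $f$ is $y$-regular. Since units do not affect factoriality, Noetherianity or the zero locus, Preparation reduces every question about $f$ to one about a Weierstra\ss{} polynomial $p\in\bc\{x\}[y]$.

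For (i), the one-variable ring $\bc\{x\}$ is a discrete valuation ring, so it is factorial and Noetherian. By Gauss's lemma, $\bc\{x\}[y]$ is factorial, and by Hilbert's basis theorem it is Noetherian.
\begin{itemize}
\item \emph{Factoriality.} Write $f=pu$ with $p$ a Weierstra\ss{} polynomial. Factor $p$ in $\bc\{x\}[y]$ into monic irreducibles. Each monic factor of a Weierstra\ss{} polynomial is again Weierstra\ss{}, since modulo $x$ the factors multiply to $y^n$ and are monic, hence are powers of $y$. Using (ii), these factors are irreducible in $\bc\{x,y\}$. For uniqueness, apply the uniqueness in Preparation to the product of two factorizations.
\item \emph{Noetherianity.} Let $I\neq0$ be an ideal and pick a $y$-regular $f\in I$ of order $n$. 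Division by $f$ gives $\bc\{x,y\}/(f)\cong\bc\{x\}^n$ as a $\bc\{x\}$-module. Hence $I/(f)$ is a submodule of a finitely generated module over a Noetherian ring, so it is finitely generated.
\end{itemize}

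For (ii), suppose $p=gh$ in $\bc\{x,y\}$ with neither factor a unit. Then $g(0,y)h(0,y)=y^n$, so both $g$ and $h$ are $y$-regular. Apply Preparation to each: $g=p_1u_1$ and $h=p_2u_2$. Uniqueness gives $p=p_1p_2$ with both factors of positive degree, so $p$ is reducible in $\bc\{x\}[y]$. Conversely, a nontrivial factorization in $\bc\{x\}[y]$ can be taken with monic Weierstra\ss{} factors of positive degree, and these are nonunits in $\bc\{x,y\}$.

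For (iv), let $p$ be a square-free Weierstra\ss{} polynomial, hence square-free in $\bc\{x\}[y]$. It is therefore coprime to $\partial_y p$ over the field $\bc\{\{x\}\}$, so $\disc_y p\neq0$. Any nonzero element of $\bc\{x\}$ has the form $x^dv(x)$ with $v(0)\neq0$.

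For (iii), which I expect to be the main obstacle, I would combine (iv) with the covering picture. Take $\epsilon$ small enough that $\disc_y p$ has no zeros on $0<|x|\le\epsilon$, and take $\delta$ small enough that all roots over $|x|\le\epsilon$ lie in $|y|<\delta$. Then $C\setminus\{0\}$, restricted to $0<|x|<\epsilon$, is an $n$-sheeted unramified covering of a punctured disk.
\begin{itemize}
\item \emph{Distinct factors give distinct components.} The zero sets of distinct irreducible factors meet only at $0$: two coprime factors have a nonzero resultant, so near $0$ they share no root with $x\neq0$. Each of these zero sets is a union of components of the covering.
\item \emph{An irreducible factor gives a connected set.} Suppose $p$ is irreducible. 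If its covering were disconnected, take a connected component of $d$ sheets. The elementary symmetric functions of its roots are bounded holomorphic functions on the punctured disk, so they extend to $\bc\{x\}$. This produces a monic factor of $p$ of degree $d<n$, a contradiction.
\end{itemize}
The delicate points are the single-valuedness of these symmetric functions and the Riemann extension argument. A further issue is making the notion of germ precise, so that connectedness is independent of the representative.
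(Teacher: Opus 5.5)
Your proposal is correct and follows the standard route the paper intends: the paper gives no separate proof, presenting the corollary as a direct consequence of the Preparation and Division Theorems with references to the textbooks, and your covering argument for (iii) (extending the symmetric functions of the roots on a connected component across $x=0$) is the same reasoning the paper uses in Lemma~\ref{lem:puiseux} and in the construction of the irreducible factors $p_j$ after it. Two small repairs in (iii): first replace $f$ by its square-free part (same zero locus, same distinct irreducible factors), since otherwise $\disc_y p\equiv 0$; and reverse the order of your radius choices, i.e.\ fix the $y$-radius first, small enough that $f=pu$ with $u$ nonvanishing, and then shrink the $x$-radius until all roots lie inside, as in Proposition-Definition~\ref{propdef:ed}, rather than taking the $y$-radius ``small enough''.
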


These results provide a way to get \emph{good representatives} of a germ. We have
chosen a way which is very dependent on the coordinates (using polydisks). The topological
properties of the result will imply that an analytic change of coordinates will lead to similar results.

\begin{propdef}\label{propdef:ed}
Let $f\in\bc\{x,y\}$ be a reduced element and let $C$ be the germ of its zero locus.
A \emph{good representative} of $f$ is a holomorphic function 
defined in a neighbourhood of $\mathbb{D}^2_\delta\times\mathbb{D}^2_\varepsilon$,
$0<\delta\ll\varepsilon$,
(which by abuse of notation will still be denoted by $f$
if no ambiguity is likely to arise), satisfying:
\begin{enumerate}[label=\rm(\alph{enumi})]
\item $f(0,y)\neq 0$ if $0<\abs{y}\leq\varepsilon$;

\item $f(x,y)\neq 0$ if $\abs{y}=\varepsilon$ and $\abs{x}\leq \delta$;

\item these properties also hold for the Weierstra\ss{} polynomial $p$ associated to $f$
(which defines the same germ~$p$).

\item $\disc_y p$ does not vanish if $0<\abs{x}\leq \delta$; in particular the point $(0,0)$
is the zero locus of $\{p(x,y),\frac{\partial p}{\partial y}(x,y)\}$.
\end{enumerate}

Any such $f$ admits good representatives. The topological type of the pair 
$(\mathbb{D}^2_\delta\times\mathbb{D}^2_\varepsilon, C)$ does not depend
on the particular good representative.
\end{propdef}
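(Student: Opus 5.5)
To prove the existence of good representatives, I would first make $f$ $y$-regular. Since $f$ is reduced and non-constant, $f\not\equiv 0$. If $f(0,y)\equiv 0$, then $x$ divides $f$. In that case I apply a generic linear change of coordinates $(x,y)\mapsto(x+ay,y)$, which makes $f(0,y)=y^nu(y)$ with $u(0)\neq0$. I will assume this is done, noting that the statement is implicitly about coordinates in which $f$ is $y$-regular, since condition (a) forces $y$-regularity.

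By the Weierstra\ss{} Preparation Theorem we have $f=pu$, with $p\in\bc\{x\}[y]$ of degree~$n$ and $u$ a unit. The polynomial $p$ is square-free because $f$ is reduced. By item (iv) of the Corollary, $\disc_y p=x^dv(x)$ with $v(0)\neq0$.

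Now choose the radii in order. First pick $\varepsilon>0$ such that:
\begin{itemize}
\item $f$, $p$ and $u$ converge on a neighbourhood of a closed polydisk of radius $\varepsilon$;
\item $u$ does not vanish there;
\item $y^nu(y)$ and $p(0,y)=y^n$ have no zeros with $0<\abs{y}\leq\varepsilon$.
\end{itemize}
This is possible because $u(0)\neq0$ and zeros of a nonzero one-variable series are isolated. It gives (a) for both $f$ and $p$.

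For (b), note that $\abs{y}=\varepsilon$ is compact and $f(0,y)\neq0$ there. By continuity there is $\delta_1>0$ such that $f(x,y)\neq0$ and $p(x,y)\neq0$ for $\abs{x}\leq\delta_1$ and $\abs{y}=\varepsilon$. Shrinking further, take $\delta\leq\delta_1$ such that $v$ does not vanish on $\abs{x}\leq\delta$. Then $\disc_y p(x)\neq0$ for $0<\abs{x}\leq\delta$, which is (d). Since $u$ is a unit, the zero loci of $f$ and $p$ in the polydisk coincide, so (c) holds.

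For the consequence stated in (d), suppose $p=\partial_yp=0$ at some point. Then $\disc_y p(x)=0$, so $x=0$. Then $p(0,y)=y^n=0$ gives $y=0$. Hence $(0,0)$ is the only common zero.

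The remaining assertion, independence of the topological type from the choice of good representative, is the main obstacle. I would argue as follows. For each fixed $x$ with $\abs{x}\leq\delta$, by (b) and (c) the $n$ roots of $p(x,\cdot)$, counted with multiplicity, lie in the open disk $\abs{y}<\varepsilon$. The pair $(\mathbb{D}_\delta\times\mathbb{D}_\varepsilon,C)$ is then a family over $\mathbb{D}_\delta$ of disks containing $n$ points. By (d) this family is an $n$-point braid over $\mathbb{D}_\delta\setminus\{0\}$, degenerating to the single point $0$ over $x=0$.

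Now compare two good representatives with $(\delta',\varepsilon')\leq(\delta,\varepsilon)$, and use the isotopy lemma. The first step is radial in $y$: I shrink $\varepsilon$ to $\varepsilon'$ with $\delta'$ fixed. The radial vector field in $y$ can be modified away from $C$ to be tangent to nothing relevant, because no roots lie in the annulus $\varepsilon'\leq\abs{y}\leq\varepsilon$ when $\abs{x}\leq\delta'$. The second step is to shrink $\delta$ to $\delta'$. Here I trivialize the locally trivial fibration over the annulus $\delta'\leq\abs{x}\leq\delta$, which is locally trivial because the roots are simple there. This trivialization gives a cone-like retraction, producing a homeomorphism of pairs.

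The case of good representatives coming from different coordinates is the delicate part, and I would only indicate it. I would connect the coordinate changes by a path in the group of analytic automorphisms and apply the same fibration argument along the path.
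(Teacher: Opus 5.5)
The paper states this Proposition–Definition without proof, treating it as a standard consequence of Weierstra\ss{} theory, so there is no argument of the paper to compare with. Your argument is the standard one and is correct in fixed coordinates.

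You are right that (a) forces $y$-regularity, so the existence claim presupposes it: for $f=xy$ no good representative exists. With that hypothesis your existence argument is complete.

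In the independence part, three steps should be made explicit.
\begin{enumerate}[label=\rm(\roman{enumi})]
\item The order of the two moves matters. The pair $(\delta,\varepsilon')$ need not be good, since (b) for the smaller $\varepsilon'$ can fail over the larger disk. For example, take $f=y-x$ with $\delta'<\varepsilon'<\delta<\varepsilon$. So the chain must be $(\delta,\varepsilon)\to(\delta',\varepsilon)\to(\delta',\varepsilon')$. Two non-comparable good pairs are compared through $(\min\delta,\min\varepsilon)$, which is again good.
\item For an arbitrary good representative, (c) only identifies germs, yet you use $f^{-1}(0)=p^{-1}(0)$ on the whole polydisk. This does hold. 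By (a), (b) and the argument principle, $f(x,\cdot)$ has exactly $n$ zeros in $\abs{y}<\varepsilon$ for every $\abs{x}\leq\delta$. The monic polynomial with these roots has holomorphic coefficients. It equals $p$ near $x=0$ by uniqueness in the Preparation Theorem, hence everywhere.
\item Local triviality of the pair over the annulus uses (b) as well as (d). The homeomorphisms are best written explicitly. By compactness, the roots over $\abs{x}\leq\delta'$ lie in $\abs{y}\leq\varepsilon'-\eta$ for some $\eta>0$. A fibrewise radial homeomorphism $\mathbb{D}_\varepsilon\to\mathbb{D}_{\varepsilon'}$ that is the identity on $\mathbb{D}_{\varepsilon'-\eta}$ then gives the $\varepsilon$-step. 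For the $\delta$-step, trivialize the pair along rays over $\delta''\leq\abs{x}\leq\delta$ with $0<\delta''<\delta'$. Then reparametrize $[\delta'',\delta]\to[\delta'',\delta']$ and extend by the identity over $\abs{x}\leq\delta''$.
\end{enumerate}

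Your final paragraph is not needed. A good representative is defined relative to the given coordinates, and the paper deals with coordinate changes separately, in the sentences just before the statement. As sketched, the path argument would also fail in general. Along a path of coordinate changes, the order $n$ of $y$-regularity (the intersection multiplicity of $C$ with $\{x=0\}$) can jump. For instance, $y-x^2$ has $n=1$, while exchanging the variables gives $x-y^2$ with $n=2$. Near such a jump the admissible radii tend to $0$, so no fibration with a fixed number of strands can be carried across it. Coordinate independence of the pair is usually obtained instead from its conic structure, by comparison with a Milnor ball.
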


Let us relate this discourse with braids. Let us fix a complex number $\hat{\delta}\in\mathbb{S}^1_\delta$ (it will be usually the positive real number~$\delta$)
and consider $\pi_1(\mathbb{D}^2_\delta\setminus\{0\};\hat\delta)$, isomorphic to $\mathbb{Z}$ admitting as canonical generator
a loop travelling the boundary counterclockwise. Then, the map
\begin{equation}\label{eq:localbraid}
\begin{tikzcd}[row sep=0pt]
\mathbb{D}^2_\delta\setminus\{0\}\rar["\hat{p}"]&T_n\\
x_0\rar[mapsto]&p(x_0,t)
\end{tikzcd}
\end{equation}
induces a morphism $\hat{p}_*:\pi_1(\mathbb{D}^2_\delta\setminus\{0\};\hat\delta)\to\pi_1(T_n;\hat{p}(\hat\delta))$; the target
is a braid group. We are going to relate this map with actual braids using Puiseux expansions. Under canonical identifications
this map will be independent of $\varepsilon,\delta$.

Let us introduce some notations. Let $C_{\varepsilon,\delta}$ be the representative of $C$ 
in ${\mathbb{D}}^2_\delta\times\mathbb{D}^2_\varepsilon$
and let
$C_{\varepsilon,\delta}^*:=C_{\varepsilon,\delta}\setminus\{(0,0)\}$.

\begin{lema}[Puiseux expansion]\label{lem:puiseux}
The space $C_{\varepsilon,\delta}^*$ is the restriction of a smooth analytic subvariety of 
an open neighbourhood of 
$\left(\mathbb{D}^2_\delta\times \mathring{\mathbb{D}}^2_\varepsilon\right)\setminus\{(0,0)\}$
and the projection  $C_{\varepsilon,\delta}^*\to\mathbb{D}^2_\delta$
is the restriction of an unramified holomorphic cover. 

In particular, if $C_1^*\cup\dots\cup C_r^*$ is the decomposition in connected components 
of $C_{\varepsilon,\delta}^*$, then for each $j\in\{1,\dots,r\}$ there is an isomorphism
\[
\begin{tikzcd}[row sep=0,/tikz/column 1/.append style={anchor=base east},/tikz/column 2/.append style={anchor=base west}]
\mathbb{D}^2_{\delta_j}\setminus\{0\}\rar&C_j^*\\
t\rar[mapsto]&(t^{m_j},h_j(t))
\end{tikzcd}
\]
where $\delta_j^{m_j}=\delta$.
\end{lema}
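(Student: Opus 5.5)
We work with the Weierstra\ss{} polynomial $p$, which by Proposition-Definition~\ref{propdef:ed}(c) defines the same zero locus in the polydisk as $f$. We then use the implicit function theorem together with covering space theory.

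\emph{Smoothness and covering.} By (d), $\disc_y p(x)\neq0$ for $0<\abs{x}\leq\delta$. So for each such $x$ the polynomial $p(x,\cdot)$ has $n$ simple roots. By (b) none of them lies on $\abs{y}=\varepsilon$, and since $p$ is monic their number inside $\mathring{\mathbb{D}}_\varepsilon$ is constant: by the argument principle it is $n$, as at $x=0$ all roots equal $0$ by (a). At every point $(x_0,y_0)$ of the zero set with $x_0\neq0$ we have $\partial p/\partial y(x_0,y_0)\neq0$. The implicit function theorem then shows that the zero set of $p$ is a smooth curve near $(x_0,y_0)$, locally a graph $y=g(x)$. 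Since $p$ is defined on a neighbourhood of the closed polydisk, this holds on an open neighbourhood of $(\mathbb{D}_\delta\times\mathring{\mathbb{D}}_\varepsilon)\setminus\{(0,0)\}$. Points with $x=0$ and $y\neq0$ do not lie on the curve by (a), so away from the origin only the case $x\neq0$ occurs. The local graph description says the projection $C^*_{\varepsilon,\delta}\to\mathbb{D}_\delta\setminus\{0\}$ is a local biholomorphism. It is also proper: the fibres are finite, and roots cannot escape to $\abs{y}=\varepsilon$ by (b). Hence it is an unramified $n$-sheeted holomorphic covering, and it extends to the same kind of covering over a slightly larger punctured disk.

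\emph{Components.} Each connected component $C_j^*$ is a connected finite covering of the punctured disk, say of degree $m_j$. Since $\pi_1(\mathbb{D}_\delta\setminus\{0\})\cong\mathbb{Z}$, the connected coverings of degree $m_j$ are classified by the subgroup $m_j\mathbb{Z}$. So $C_j^*$ is isomorphic as a covering to $t\mapsto t^{m_j}$ from $\mathbb{D}_{\delta_j}\setminus\{0\}$ with $\delta_j^{m_j}=\delta$. The covering isomorphism is a homeomorphism $\psi:\mathbb{D}_{\delta_j}\setminus\{0\}\to C_j^*$ with $\operatorname{pr}_x\circ\psi(t)=t^{m_j}$. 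Because both projections are local biholomorphisms, $\psi$ is holomorphic. Writing $\psi(t)=(t^{m_j},h_j(t))$ gives a holomorphic $h_j$ on the punctured disk with $\abs{h_j}<\varepsilon$.

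\emph{Main point.} The statement as written needs only holomorphy on the punctured disk. To get a genuine Puiseux series, with $h_j$ extending over $0$ and $h_j(0)=0$, one uses Riemann's removable singularity theorem, since $h_j$ is bounded by $\varepsilon$. Then $h_j(0)=0$ follows from continuity: as $x\to0$ all roots tend to $0$ by (a) and Rouch\'e. The step needing most care is the properness/no-escape argument giving a covering, as opposed to a mere local homeomorphism. It rests on (b) and on continuity of the roots of the monic polynomial $p$, and I would handle it with Rouch\'e's theorem on the circle $\abs{y}=\varepsilon$.
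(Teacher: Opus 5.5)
Your proof is correct and follows the paper's route: the implicit function theorem (via the non-vanishing discriminant) makes the projection a local biholomorphism, which is then a finite covering of the punctured disk. Each connected component is identified with $t\mapsto t^{m_j}$, and the bounded $h_j$ extends over $0$ with $h_j(0)=0$. The only cosmetic difference is that you get the covering property from properness via (b), whereas the paper cites the constant fibre cardinality $n$, which rests on the same root count you carry out.
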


\begin{proof}
Let $(x_0,y_0)\in C_{\varepsilon,\delta}^*$. We have that $p(x_0, y_0)=0$ and $\disc_y p(x_0)\neq 0$, hence
$\frac{\partial p}{\partial y}(x_0,y_0)\neq 0$, and from the Implicit Function Theorem 
$C_{\varepsilon,\delta}^*$ is the restriction of a smooth subvariety of a neighbourhood of $\left(\mathbb{D}^2_\delta\times\mathring{\mathbb{D}}^2_\varepsilon\right)\setminus\{(0,0)\}$
and the map
\[
\begin{tikzcd}[row sep=0,/tikz/column 1/.append style={anchor=base east},/tikz/column 2/.append style={anchor=base west}]
\mathring{C}_{\varepsilon,\delta}^*\rar["\pi"]&\mathbb{D}^2_\delta\setminus\{0\}\\
(x,y)\rar[mapsto]&x
\end{tikzcd}
\]
is the restriction of a local analytic diffeomorphism. Since all the fibers have the same cardinality~$n$, it is an unramified cover, maybe non connected.
Let $C_i^*$ be a connected component of $C_{\varepsilon,\delta}^*$; the restriction of $\pi$ is still a covering,
and it must be a cyclic cover of $\mathbb{D}^2_\delta\setminus\{0\}$. Let $m_j$ be the degree of this covering
and let $\delta_j$ be the $m_j^{\text{th}}$-root of $\delta$. Any two such covers are isomorphic, so the following
diagram holds:
\[
\begin{tikzcd}
C_j^*
\ar[rd, "\pi"]
&&
\ar[ll, dashrightarrow, "\Phi_j" above]
\ar[ld, "\varphi_{m_j}" above=3pt]
\mathbb{D}^2_{\delta_j}\setminus\{0\}
\\
&
\mathbb{D}^2_\delta\setminus\{0\}
&
\end{tikzcd}
\]
where $\varphi_{m_j}(t)=t^{m_j}$. Hence,
there is a holomorphic function whose restriction is $h_j:\mathbb{D}^2_{\delta_j}\setminus\{0\}\to\mathring{\mathbb{D}}^2_\varepsilon$
such that $\Phi_j(t)=(t^{m_j},h_j(t))$. Since $h_j$ is locally bounded it can be holomorphically extended to $0$
with $h_j(0)=0$.
\end{proof}

\begin{remark}
The extended maps $\Phi_j:\mathbb{D}^2_\delta\to\mathbb{C}^2$ 
are the \emph{Puiseux expansions} of~$C_{\varepsilon,\delta}$.
\end{remark}

\begin{lema}\label{lem:puiseux_braid}
For $j=1,\dots,r$ and $k=1,\dots,m_j$, let us consider the paths
\[
\begin{tikzcd}[row sep=0,/tikz/column 1/.append style={anchor=base east},/tikz/column 2/.append style={anchor=base west}]
{[0,1]}\rar["\gamma_{j,k}"]&\bc\\
s\rar[mapsto]&h_j\left(\hat\delta^{\frac{1}{m_j}}\exp\left(2\sqrt{-1}\pi\frac{s+k-1}{m_j}\right)\right).
\end{tikzcd}
\]
The origins and ends of these paths are the roots of $p(\hat{\delta},t)$ and they form the braid 
image by $\hat{p}_*$ of the canonical generator of $\pi_1(\mathbb{D}^2_\delta\setminus\{0\};\hat\delta)$.
\end{lema}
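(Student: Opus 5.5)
The approach is to lift the canonical loop through the covering $\pi\colon C^*_{\varepsilon,\delta}\to\mathbb{D}^2_\delta\setminus\{0\}$ of Lemma~\ref{lem:puiseux}, and then read off the resulting motion of the roots in the $y$-line as a braid.

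First, represent the canonical generator of $\pi_1(\mathbb{D}^2_\delta\setminus\{0\};\hat\delta)$ by the loop $\omega(s):=\hat\delta\exp(2\sqrt{-1}\pi s)$, $s\in[0,1]$. By definition, $\hat p_*[\omega]$ is the class of the loop $s\mapsto p(\omega(s),t)$ in $T_n$. Under the identification of $T_n$ with $Y_n=X_n/\mathfrak{S}_n$ through $\Phi$, a loop in $T_n$ is the same as a continuous family of $n$-point sets $\{y\mid p(\omega(s),y)=0\}\subset\mathbb{C}$. By condition (b) of Proposition-Definition~\ref{propdef:ed}, these sets lie in $\mathring{\mathbb{D}}_\varepsilon$. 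The braid is therefore the union of the lifts of $\omega$ to $C^*_{\varepsilon,\delta}$, projected to the $y$-coordinate.

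Second, compute these lifts using the parametrizations $\Phi_j(t)=(t^{m_j},h_j(t))$. Fix a choice of $\hat\delta^{1/m_j}$. For each $j$ and $k$, the path $\tilde\omega_{j,k}(s):=\hat\delta^{1/m_j}\exp\bigl(2\sqrt{-1}\pi\frac{s+k-1}{m_j}\bigr)$ in $\mathbb{D}^2_{\delta_j}\setminus\{0\}$ satisfies $\tilde\omega_{j,k}(s)^{m_j}=\omega(s)$. Hence $\Phi_j\circ\tilde\omega_{j,k}$ is a lift of $\omega$ to $C_j^*$ whose $y$-coordinate is exactly $\gamma_{j,k}$. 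The starting points $\hat\delta^{1/m_j}\zeta^{k-1}$, with $\zeta=\exp(2\sqrt{-1}\pi/m_j)$ and $k=1,\dots,m_j$, are the $m_j$ distinct points of the fibre of $t\mapsto t^{m_j}$ over $\hat\delta$. Since $\Phi_j$ is an isomorphism onto $C_j^*$, these lifts start at the $m_j$ distinct points of $C_j^*\cap\pi^{-1}(\hat\delta)$.

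Third, count and collect. The components $C_j^*$ are disjoint and $\sum_j m_j=n$, because the fibre of $\pi$ has cardinality $n$. So the $n$ paths $\gamma_{j,k}$ start at the $n$ distinct roots of $p(\hat\delta,t)$. They also end at roots of $p(\hat\delta,t)$, since $\gamma_{j,k}(1)=\gamma_{j,k+1}(0)$ (indices taken cyclically mod $m_j$). For each $s$, the values $\gamma_{j,k}(s)$ are pairwise distinct: they are the $y$-coordinates of the $n$ distinct points of the fibre $\pi^{-1}(\omega(s))$, and two points of $C^*_{\varepsilon,\delta}$ in the same vertical line are distinct exactly when their $y$-coordinates differ. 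So $(\gamma_{j,k})$ is a path in $X_n$ whose image in $Y_n\cong T_n$ is $s\mapsto p(\omega(s),t)$, which is precisely the braid $\hat p_*[\omega]$.

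The only delicate point is the bookkeeping in this third step. One must check that the chosen lifts exhaust the fibre, so that no root is missed or repeated, and that the branch of $\hat\delta^{1/m_j}$ is used consistently, so that the cyclic shift $k\mapsto k+1$ correctly encodes the monodromy within each component $C_j^*$. Independence of the choice of root only permutes the labels $k$ and does not change the braid in $Y_n$.
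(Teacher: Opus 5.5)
Your proposal is correct and is the argument the paper has in mind when it calls the proof straightforward: lift the loop $s\mapsto\hat\delta\exp(2\sqrt{-1}\pi s)$ through the covering $C^*_{\varepsilon,\delta}\to\mathbb{D}^2_\delta\setminus\{0\}$ using the parametrizations $\Phi_j$ of Lemma~\ref{lem:puiseux}, then read off the $y$-coordinates. Your bookkeeping supplies the details the paper omits: the $n$ lifts exhaust each fibre, the $y$-values stay pairwise distinct, and $\gamma_{j,k}(1)=\gamma_{j,k+1}(0)$ holds cyclically.
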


The proof of this lemma is straightforward. Let us see some examples.

\begin{ejm}\label{ex:onepair}
Let us consider $f=p=y^n-x^m$. Since it is quasihomogeneous, we may take $\varepsilon=2$ and $\delta=1$. We take $\hat\delta=1$.
Let $d:=\gcd(n,m)$ and let $n_1=\frac{n}{d}$, $m_1=\frac{m}{d}$. The Puiseux expansions of 
Lemma~\ref{lem:puiseux} are 
\[
\begin{tikzcd}
t\rar[mapsto]&(t^{n_1},\exp\frac{2\sqrt{-1}\pi\ell}{d}\,t^{m_1}),\quad
\ell\in\{0,1,\dots,d-1\}.
\end{tikzcd}
\]
Note that the roots of $p(1,y)$ are the $n$-roots of unity. In order to identify the braid group with $\mathbb{B}_n$
we choose the path as in the right-hand side of Figure~\ref{fig:path}, i.e, an arc in the circle of radius~$1$ centered at
the origin. It is left to the reader to check that with this identification with $\mathbb{B}_n$ the braid obtained is
$(\sigma_{n-1}\cdot\ldots\cdot\sigma_1)^m$. In particular, for $n=2$, we obtain $\sigma_1^m$
and for $m=1$ we obtain $\sigma_{n-1}\cdot\ldots\cdot\sigma_1$.
\end{ejm}

It is customary to express this previous series as $h_j\left(x^{\frac{1}{m_j}}\right)$, i.e., a power series with rational exponents,
with bounded denominators. Note that this series is not uniquely defined, since 
$h_j\left(\zeta_j x^{\frac{1}{m_j}}\right)$, for any $m_j$-root of unity, plays the same role. In the language of Lemma~\ref{lem:puiseux_braid},
there is a cyclic permutation of $\gamma_{j,1},\dots,\gamma_{j,m_j}$.
Let us consider 
\[
p_j(x,y):=\prod_{\zeta_j^{m_j}=1} \left(y-h_j\left(\zeta_j x^{\frac{1}{m_j}}\right)\right).
\]
It is not hard to prove that $p_j\in\bc\{x\}[y]$ (irreducible), $p=p_1\cdot\ldots\cdot p_r$, and that $C_j:=C_j^*\cup\{0\}$,
is a representative of $p_j^{-1}(0)$. Following the same ideas of the previous lemma, we determine 
a braid in $\mathbb{B}_{m_j}$ whose closure is a knot.

Given $h_j$, we can write it down as
\[
h_j\left(x^{\frac{1}{m_j}}\right)=
h_{j,0}(x)+
\sum_{\ell=1}^g
x^{\frac{p_{j,\ell}}{q_{j, 1}\cdot\dots\cdot q_{j, \ell}}}
h_{j,g}\left(x^{\frac{1}{q_{j, 1}\cdot\dots\cdot q_{j, \ell}}}\right)
\]
where $q_1,\dots,q_g>1$ and
\[
\frac{p_{j,1}}{q_{j, 1}}<\dots<\frac{p_{j,\ell}}{q_{j, 1}\cdot\dots\cdot q_{j, \ell}}<\dots<
\frac{p_{j,g}}{q_{j, 1}\cdot\dots\cdot q_{j, g}}.
\]
If we assume that $h_{j,0}$ contains all the terms till 
the term with exponent $\frac{p_{j,\ell}}{q_{j, 1}}$
and a similar condition for the next terms, then
this decomposition is unique.

Let us set 
\begin{equation}\label{eq:puiseux_exp}
\frac{p_{j,\ell}}{q_{j, 1}\cdot\dots\cdot q_{j, \ell}}=:\frac{c_{j,\ell}}{m_j}
\end{equation}
\begin{prop}\label{prop:isotopy1}
The braids determined by $h_j$ and by 
\[
\tilde{h}_j(x)=\sum_{\ell=1}^g x^{\frac{c_{j,\ell}}{m_j}}
\]
are conjugate. In particular, the knots determined by $p_j$ and 
\[
\tilde{p}_j(x,y):=\prod_{\zeta_j^{m_j}=1} \left(y-\tilde{h}_j\left(\zeta_j x^{\frac{1}{m_j}}\right)\right)
\]
are isotopic.
\end{prop}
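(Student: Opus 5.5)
The plan is to connect $h_j$ and $\tilde h_j$ by a one-parameter family of Puiseux-type parametrizations, check that every member of the family gives a genuine braid on a fixed small circle, and then read off conjugacy from homotopy invariance. Concretely, write $h_j(t)=\sum_{k\geq 1} a_k t^k$ in the variable $t=x^{1/m_j}$. Since $\gcd$ considerations give $m_j=q_{j,1}\cdots q_{j,g}$, the \emph{characteristic exponents} $c_{j,1}<\dots<c_{j,g}$ are exactly the indices $k$ at which $\gcd(m_j,k_1,\dots,k)$ drops. For $s\in[0,1]$ define
\[
H_s(t):=\sum_{\ell=1}^g \bigl((1-s)a_{c_{j,\ell}}+s\bigr)t^{c_{j,\ell}}+(1-s)\sum_{k\notin\{c_{j,\ell}\}}a_k t^k .
\]
Here $a_{c_{j,\ell}}\neq 0$. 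If some coefficient path passes through $0$, I replace the straight segment by a path in $\bc^*$ from $a_{c_{j,\ell}}$ to $1$. For each $s$ I consider the $m_j$ paths
\[
u\mapsto H_s\bigl(\hat\delta^{1/m_j}\exp(2\sqrt{-1}\pi (u+k-1)/m_j)\bigr),
\]
as in Lemma~\ref{lem:puiseux_braid}.

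The key step, and the main obstacle, is to show that for $\delta$ small, uniformly in $s$, these $m_j$ points are pairwise distinct for every $x$ with $\abs{x}=\delta$. The argument goes as follows. For two conjugates $\zeta,\zeta'$ with $\zeta^{m_j}=\zeta'^{m_j}=1$, $\zeta\neq\zeta'$, let $\ell$ be the first index with $\zeta^{c_{j,\ell}}\neq\zeta'^{c_{j,\ell}}$; such an index exists because the $c_{j,\ell}$ together with $m_j$ have $\gcd 1$. All exponents $k<c_{j,\ell}$ have $\zeta^k=\zeta'^k$, because each such $k$ with nonzero coefficient is divisible by $\gcd(m_j,c_{j,1},\dots,c_{j,\ell-1})$, and for $\tilde h$ the non-characteristic terms are absent. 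Hence
\[
H_s(\zeta t)-H_s(\zeta' t)=b_\ell(s)\bigl(\zeta^{c_{j,\ell}}-\zeta'^{c_{j,\ell}}\bigr)t^{c_{j,\ell}}\bigl(1+O(t)\bigr),
\]
where $b_\ell(s)\neq 0$ and the $O(t)$ is uniform on the compact $s$-interval, since the coefficients depend continuously on $s$ and the series converge on a common disk. Shrinking $\delta$ makes this nonzero. One must also make sure the values stay inside a disk of radius $\varepsilon$, which is immediate for small $\delta$.

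This produces a continuous family of loops in $T_{m_j}$, that is, a free homotopy between the braid of $h_j$ (at $s=0$) and that of $\tilde h_j$ (at $s=1$). The base points move along the path $s\mapsto\{H_s(\zeta\hat\delta^{1/m_j})\}$, so after identifying both with $\mathbb{B}_{m_j}$ as in \S\ref{sec:braid} the two braids differ by conjugation by the braid traced out by the base points. Freely homotopic loops being conjugate gives the first claim. The closures of conjugate braids are isotopic links in the solid torus, hence in $\mathbb{S}^3$. Finally, the knot of $p_j$ is the closure of its braid: the link of $C_j$ on $\partial(\mathbb{D}_\delta\times\mathbb{D}_\varepsilon)$ lies in $\mathbb{S}^1_\delta\times\mathbb{D}_\varepsilon$ after smoothing corners, by property (b) of a good representative. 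The same holds for $\tilde p_j$, which is a Weierstra\ss{} polynomial whose factors are the $\tilde h_j(\zeta x^{1/m_j})$. This yields the isotopy of the knots.
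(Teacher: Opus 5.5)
Your proof is correct, but it takes a different route from the paper.

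\textbf{The paper's route.} The paper's sketch is geometric in $\bc\times[0,1]$. After the normalization $y\mapsto (y-h_{j,0})/h_{j,1}$, it notes that both braids lie in $q_{j,1}$ thin cylinders around the strands of the torus braid of $y^{q_{j,1}}-x^{p_{j,1}}$. It isotopes one braid into the other inside these cylinders, then iterates with narrower cylinders for the remaining $g-1$ characteristic pairs. In effect, it exhibits the braid as an iterated cable of torus braids (the Brieskorn--Kn\"orrer and L\^e carrousel picture).

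\textbf{Your route.} You deform the Puiseux series in coefficient space through a family with the same characteristic exponents, and reduce everything to one uniform non-collision estimate. For $\zeta\neq\zeta'$, the difference $H_s(\zeta t)-H_s(\zeta' t)$ starts at the first characteristic exponent $c$ with $\zeta^{c}\neq\zeta'^{c}$, and its coefficient is bounded away from $0$ on $[0,1]$. This holds because the support of $H_s$ is contained in that of $h_j$, and every earlier exponent in that support is divisible by the relevant gcd. The result is a free homotopy of loops in $Y_{m_j}$, with the conjugating braid given explicitly by the motion of the base points. This is exactly the mechanism described in the remark following the proposition.

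\textbf{Comparison.} Your argument needs no cylinder bookkeeping, so it is shorter and more complete than the iterative sketch. The paper's approach buys structural information instead. It identifies the knot as an iterated torus knot whose cabling data are the characteristic pairs. That structure is what gets reused for the several-branch version with coincidence exponents, and for choosing $\Gamma$ so that the local braid is positive (Proposition~\ref{prop:positive}).

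Your deformation method also adapts to several branches, with one caveat. There you must keep the cross-branch leading coefficients nonzero along the path. That is why one keeps the original coefficients at the essential exponents (as in $\hat h_j$) rather than moving them all to $1$.
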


\begin{proof}[Idea of the proof]
If $m_j=1$, there is nothing to prove since the braids have only one strand. If $m_j>1$, then 
$h_{j,0}(x)$ is a polynomial and the map
\[
\begin{tikzcd}[row sep=0,/tikz/column 1/.append style={anchor=base east},/tikz/column 2/.append style={anchor=base west}]
\mathbb{C}\times{[0,1]}\rar&\mathbb{C}\times{[0,1]}\\
(y, t)\rar[mapsto]& 
\left(\dfrac{y - h_{j,0}\left(\hat{\delta}^{\frac{1}{m_j}}\frac{2\sqrt{-1}\pi t}{m_j}\right)}{h_{j,1}\left(\hat{\delta}\exp\left(\hat{\delta}^{\frac{1}{m_j}}\frac{2\sqrt{-1}\pi t}{m_j}\right)\right)}, t\right)
\end{tikzcd}
\]
sends the braid $\tau_{\tilde{h}_j}$ to a braid whose end points are close 
to the ones of $\tau_{h_j}$, and this isotopy induces a conjugation of the braids. 
The end points for both braids
are distributed in $q_{j,1}$ disks
centered at the $q_{j,1}$-roots of $\hat{\delta}^{p_{j,1}}$.
Actually both braids are inside $q_{j,1}$ cylinders 
which are neighbourhoods of the strands of the braid $\tau_1$ associated to
$y^{q_{j,1}}-x^{p_{j,1}}$. If $g=1$, actually $\tau_1=\tau_{\tilde{h}_j}$
and since each strand of $\tau_{h_j}$ is in one of the cylinders,
then the two braids are isotopic.

For general $g>1$, we iterate this process, but now the isotopies are performed
inside these cylinders for the first step, and inside narrower (and longer) cylinders
for the next $g-1$ steps.
\end{proof}

These ideas are borrowed from~\cite[Theorem~3]{Brieskorn1986}, which contains
more details. A more detail explanation can be found in \cite{pham}
and \cite{Michel-Weber}.

\begin{remark}
Why do those isotopies induce conjugation and not equality of braids? If we have
two braid representatives with the same end points and we have an isotopy
which fixes the base points then the two braids are indeed equal. If we allow the isotopy
to move the end points (the same motion on both sides), 
actually the motion of these base points defines a conjugating braid
between the two braids. 
In Proposition~\ref{prop:isotopy1}, the two braids have two distinct systems
of end points. In \S\ref{sec:braid}, we identified these braids with elements
in $\mathbb{B}_n$, but the identification was made using a specific motion
from the end points of a braid to $\{1,\dots,n\}$. The motions defined by
an actual isotopy can be different, and this is why in general one cannot assume
equality in Proposition~\ref{prop:isotopy1}.
\end{remark}

For series $h$ in $x$ with either integer or rational exponents, we define 
the $x$-order $\nu_x(h)$ of $h$ as $\infty$, if $h=0$, or as the exponent of the first 
non-zero term, otherwise.
Note for example that for 
\[
\left\{\nu_x\left(h_j\left(\zeta_j x^{\frac{1}{m_j}}\right)-h_j\left(x^{\frac{1}{m_j}}\right)\right)
\,\middle|\,\zeta_j^{m_j}=1, \zeta_j=1
\right\}
\]
is exactly the set exponents in \eqref{eq:puiseux_exp},
the \emph{characteristic Puiseux exponents}.

Something similar can be done when we have several branches.
Given two different branches $h_j, h_k$, let
\[
\mu_{j, k}:=\max
\left\{\nu_x\left(h_j\left(\zeta_j x^{\frac{1}{m_j}}\right)-h_k\left(\zeta_k x^{\frac{1}{m_k}}\right)\right)
\,\middle|\,\zeta_j^{m_j}=\zeta_k^{m_k}=1
\right\}.
\]
Let us denote 
\[
h_j(x) = \sum_{\ell=1}^\infty a_{\ell,j} x^{\frac{\ell}{m_j}}.
\]
Let $\mathcal{S}_j\subset\mathbb{N}$ be the subset formed by the exponents $\frac{c_{j,\ell}}{m_j}$
and the exponents $\mu_{j,k}$ for $k\neq j$ (known as \emph{coincidence exponents}).
Let 
\[
\hat{h}_j(x):=\sum_{\frac{\ell}{m_j}\in\mathcal{S}_j} a_{\ell,j} x^{\frac{\ell}{m_j}}.
\]

This proposition follows the same ideas as Proposition~\ref{prop:isotopy1},
see \cite[Theorem~15]{Brieskorn1986} and also \cite{pham, Michel-Weber}. 
Lê's \emph{carrousel}~\cite{Le:78,Le:79} is guided by the same ideas.

\begin{prop}
The braids determined by $\{h_j\}_{j=1}^r$ and by $\{\hat{h}_j\}_{j=1}^r$ 
are conjugate. In particular, the links determined by $p$ and 
\[
\prod_{j=1}^r\left(\prod_{\zeta_j^{m_j}=1} \left(y-\hat{h}_j\left(\zeta_j x^{\frac{1}{m_j}}\right)\right)\right)
\]
are isotopic.
\end{prop}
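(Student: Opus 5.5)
We will follow the strategy of Proposition~\ref{prop:isotopy1}, now applied to all branches together. Over the loop $s\mapsto\hat\delta e^{2\sqrt{-1}\pi s}$ in $\mathbb{D}^2_\delta\setminus\{0\}$, Lemma~\ref{lem:puiseux_braid} presents the braid of $p$ as the union of the strands $\gamma_{j,k}$. The same construction applied to $\{\hat h_j\}$ gives a second braid $\hat\tau$. We will connect the two families of strands by the straight-line homotopy
\[
H_u(x)=\hat h_j\bigl(\zeta x^{1/m_j}\bigr)+u\bigl(h_j(\zeta x^{1/m_j})-\hat h_j(\zeta x^{1/m_j})\bigr),\quad u\in[0,1].
\]
This is taken simultaneously for all $j$ and all roots $\zeta$, and it is restricted to $\abs{x}=\delta'$ for some small $\delta'\le\delta$; shrinking $\delta$ to $\delta'$ does not change the braid, by the independence of the good representative in Proposition-Definition~\ref{propdef:ed}. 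If, for every $u$, the $n$ resulting points stay pairwise distinct along the whole loop, then $u\mapsto H_u$ is an isotopy of closed braids in the solid torus. Its end points move by the same motion at $s=0$ and $s=1$, so, as in the Remark following Proposition~\ref{prop:isotopy1}, the braids are conjugate and their closures, the links, are isotopic.

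The key step, and the main obstacle, is proving that the points remain distinct. For this we compare each pair of strands via the $x$-order $\nu_x$ of their difference. Take two distinct strands, given by the series $\phi_1=h_j(\zeta_1x^{1/m_j})$ and $\phi_2=h_k(\zeta_2x^{1/m_k})$, possibly with $j=k$. Let $e=\nu_x(\phi_1-\phi_2)$.

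We claim that $e\in\mathcal{S}_j\cap\mathcal{S}_k$, and that the truncated series $\hat\phi_1,\hat\phi_2$ agree in every exponent below $e$ and differ in exponent $e$ with the same leading coefficient as $\phi_1-\phi_2$.
\begin{itemize}
\item When $j=k$, $e$ is a characteristic exponent by the description preceding the definition of $\mu_{j,k}$.
\item When $j\neq k$, $e\le\mu_{j,k}$. Two series can first differ only at an exponent that either lies in $\mathcal{S}_j$ (a characteristic exponent of $h_j$, where the choice of conjugate matters) or equals the coincidence exponent $\mu_{j,k}$. The standard argument is to replace $\zeta_1$ by a conjugate that maximizes agreement and use the ultrametric inequality for $\nu_x$.
\end{itemize}
We expect this combinatorial verification to be the delicate part. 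We would carry it out by induction on exponents, following \cite[Theorem~15]{Brieskorn1986}.

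Given the claim, write $\phi_1-\phi_2=a x^{e}+(\text{higher})$ with $a\neq0$. Then for every $u\in[0,1]$ the $H_u$-difference has the same form $a x^{e}+R_u(x)$, where $\nu_x(R_u)>e$. The reason is that every term of $\phi_i-\hat\phi_i$ with exponent below $e$ appears identically in both strands and cancels. By convergence, $\abs{R_u(x)}\le C\abs{x}^{e+1/N}$ uniformly in $u$, where $N$ is the lcm of the $m_j$. Hence, for $\abs{x}=\delta'$ small enough, the difference is nonzero for every pair and every $u$.

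There are finitely many pairs, so a single $\delta'$ works for all of them. This gives the isotopy and completes the argument.
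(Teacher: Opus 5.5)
Your argument is correct, and it takes a different route from the one the paper indicates. The paper gives no proof of its own. It only says the result follows the ideas of Proposition~\ref{prop:isotopy1}: one rescales by $y\mapsto (y-h_{j,0})/h_{j,1}$ and locates the strands level by level inside thinner and thinner tubes around the strands of the torus braids of the successive Puiseux pairs and coincidence levels. This is the carrousel, or iterated-cabling, picture of the cited references.

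You replace this with a single straight-line homotopy $g_{j,u}=(1-u)\hat h_j+u\,h_j$ on a small circle $\abs{x}=\delta'$, together with a uniform $\nu_x$-estimate. Each $g_{j,u}$ is a function of $t=x^{1/m_j}$, so the set of strands is well defined for every $u$. You therefore get a free homotopy of loops in $T_n$, hence conjugacy, for all branches at once. Your route is shorter and makes transparent why exactly the exponents in $\mathcal{S}_j$ must be kept. The paper's route costs more, but it exhibits the structure of the braid (iterated torus link, positive representatives as in Proposition~\ref{prop:positive}), which a bare homotopy does not.

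The combinatorial claim you postpone is true and needs no induction. Note, however, that your sentence ``every term of $\phi_i-\hat\phi_i$ below $e$ appears identically in both strands'' uses more than $\nu_x(\phi_1-\phi_2)=e$. You need $\mathcal{S}_j$ and $\mathcal{S}_k$ to agree on the common support below $e$, so this should be stated and proved. Both points follow from the ultrametric inequality:
\begin{itemize}
\item In the definition of $\mu_{j,k}$ one may fix $\zeta_1$, because substituting $x^{1/N}\mapsto\omega x^{1/N}$ preserves $\nu_x$.
\item Suppose $j\neq k$ and $e<\mu_{j,k}$. Take a conjugate $\phi_2'$ of $h_k$ with $\nu_x(\phi_1-\phi_2')=\mu_{j,k}$. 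Then $\nu_x(\phi_2-\phi_2')=e$, so $e$ is a characteristic exponent of $h_k$, and symmetrically of $h_j$.
\item Below $e$, the characteristic exponents of $h_j$ and $h_k$ are read off from the common support of $\phi_1,\phi_2$, so they coincide there.
\item If $\mu_{j,l}<e\le\mu_{j,k}$ for some $l$, a two-sided ultrametric argument gives $\mu_{k,l}=\mu_{j,l}$, and in particular $l\neq k$. Hence $\mu_{j,l}\in\mathcal{S}_k$, and symmetrically with $j$ and $k$ exchanged.
\end{itemize}
With this in hand, your estimate $\abs{a}\abs{x}^e-C\abs{x}^{e+1/N}>0$, uniform in $u$, finishes the proof.
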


One can say more about these braids. Note that for the choice of $\Gamma$ in Example~\ref{ex:onepair}, the braid is \emph{positive},
i.e., it can be represented by a positive word in the Artin generators. Whenever there is more than one Puiseux exponent (or coincidence
exponent) an application of the \emph{L{\^e} carrousel} gives the hint to find the right $\Gamma$
such that the corresponding local braid is positive, connecting wisely the 
different levels of the carrousel.

\begin{prop}\label{prop:positive}
There exists a choice of $\Gamma$ such that the local braid associated to a germ of curve singularity is positive.
\end{prop}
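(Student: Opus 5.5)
We argue by induction on the depth of the Puiseux/coincidence tree, using the truncated model $\{\hat h_j\}$ of the last proposition. That proposition and Proposition~\ref{prop:isotopy1} show the local braid is conjugate to the braid of the model. Changing $\Gamma$ conjugates the braid: a different path identifies $\pi_1(Y_n;\mathbf{x})$ with $\mathbb{B}_n$ via a different standard system. Hence it suffices to produce, for the model, a path $\Gamma$ through the roots of $\hat p(\hat\delta,t)$ for which the monodromy braid is a positive word.

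The exponents in $\bigcup_j\mathcal S_j$, ordered increasingly as $e_1<e_2<\dots<e_s$, give a tree structure (Lê's carrousel). At level $e_1$, the roots of $\hat p(\hat\delta,\cdot)$ cluster into finitely many groups. Each group lies in a small disk around a point $\zeta\hat\delta^{e_1}a$. Some groups are single points, namely branches already separated at a lower exponent, which sit near $0$. The centers of the groups are permuted as in the model $y^{q}-x^{p}$.

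Inside each disk, the points of a cluster, seen relative to its center and rescaled, form the configuration of a subsequent level with higher exponents. As $x$ runs once around $\mathbb{S}^1_\delta$, the outer level performs a rotation of the clusters, which is the braid of Example~\ref{ex:onepair} applied to the clusters treated as fat strands. Simultaneously, each cluster performs the braid of its own inner level. Because the inner exponents are larger, the inner motion is a rotation by the angle $2\pi e$, with $e$ the corresponding exponent.

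The path is then built recursively. At the outer level, take an arc of a circle through the cluster centers, as in the right-hand side of Figure~\ref{fig:path}. If the cluster at $0$ is nonempty, join it to the circle by a radial segment. Inside each disk, replace the visit of a center by a path adapted to that cluster, which exists by induction. Concatenate the pieces so that the path enters each disk where the previous piece leaves, keeping the result simple. In the associated basis, the outer rotation becomes a cabling of $(\sigma_{k-1}\cdots\sigma_1)^{m}$: each $\sigma_i$ is replaced by the positive braid that transports one block of strands over an adjacent block. This is a positive word, since it is a product of Artin generators crossing whole blocks.

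The inner braids are positive by the inductive hypothesis. They are juxtaposed on disjoint blocks of strands and commute with the cabling up to reindexing. Their product is therefore positive.

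The main obstacle is the compatibility of conventions. One must check that the full twist of a cluster carried along the outer rotation appears with positive sign. One must also check that the path chosen at the outer level, especially the segment to the central cluster with integer-exponent branches, induces on each disk the same basis used inductively. Otherwise the two levels differ by a non-positive conjugating braid. I would settle this first for two levels, $y^{q}-x^{p}$ with a second characteristic exponent and with two tangent branches, by computing explicitly with the action on $\mathbb{F}_n$ from \S\ref{sec:free}. This fixes on which side of the circle the disks must be connected. The general case is then an iteration of the same gluing.
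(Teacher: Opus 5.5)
Your route (truncated model, Lê's carrousel read level by level, cabling of the outer rotation with the inner braids inserted) is the one the paper points to. The paper offers nothing beyond the hint of ``connecting wisely the different levels of the carrousel''. As written, though, your inductive step is unjustified, and the ``compatibility of conventions'' you postpone is the actual content of the proposition.

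If the first exponent is $e_1=p/q$ with $q>1$, one turn of $x$ permutes the clusters cyclically. During one turn a cluster does not perform a closed braid of its own; it is carried onto another cluster by a fractional rotation. The inductive hypothesis says nothing about such paths, so ``the inner braids are positive by the inductive hypothesis, juxtaposed on disjoint blocks'' has no support. For $y=x^{3/2}+x^{7/4}$, each of the two $2$-strand clusters contributes ``$3.5$ crossings'' per turn. Depending on the inner paths chosen, this becomes $3+4$, $0+7$ or $-1+8$ crossings, and the last is not positive.

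The outer step is also too narrow:
\begin{itemize}
\item The moving clusters may lie on several circles (take $y^2=x^3$ together with $y^2=4x^3$), so there is no single arc through them.
\item The cluster at $0$ consists of the branches whose coefficient at $x^{e_1}$ vanishes. It need not consist of single points (add $y^2=x^5$ to $y^2=x^3$), and the other clusters wind around it.
\item Hence the outer braid is in general a periodic braid with a fixed central fat strand, not a cabling of $(\sigma_{k-1}\cdots\sigma_1)^m$.
\end{itemize}

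The missing idea is to run the induction up to conjugacy. This costs nothing: $\mathbb{B}_n$ acts transitively on isotopy classes of paths $\Gamma$, so every conjugate of the local braid is realized by some $\Gamma$.

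\begin{enumerate}
\item Measure inner motions in the frame translated with the cluster centres, and put $u=x^{1/q}$. A cluster returns to itself after $q$ turns.
\item Its return braid $\beta$ is the local braid, in the variable $u$, of the germ formed by the higher-order parts of its branches. That germ has exponents $qe_j$ with $e_j>e_1$, hence fewer levels, and this is where the induction applies.
\item Choose the inner path of each cluster of a cycle as the transport of the previous one along the fractional motion. Then all fractional pieces but one become trivial, and the monodromy is conjugate to $\mathrm{cab}(T)\cdot\iota_1(\beta)$. Here $\mathrm{cab}$ is your block cabling, and $T$ is the periodic braid of the centres. $T$ is conjugate to a power of $\sigma_{k-1}\cdots\sigma_1$ or, with a central cluster, of $\sigma_1^2\sigma_2\cdots\sigma_k$, so it is positive for a suitable path.
\item If $\beta=\alpha^{-1}P\alpha$ with $P$ positive, conjugate by the diagonal braid $\prod_k\iota_k(\alpha)$. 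It commutes with $\mathrm{cab}(T)$ and turns $\iota_1(\beta)$ into $\iota_1(P)$. This disposes of your worry about a non-positive conjugating braid between levels.
\item Repeat for each cycle of clusters. The central cluster closes up after one turn and is handled by induction on its own block.
\end{enumerate}

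In the example this gives $\mathrm{cab}(\sigma_1^3)\cdot\iota_1(\sigma_1^7)$, of degree $12+7=19=\mu+(C\cdot\{x=0\})-1$ with $\mu=16$, consistent with the paper's remark.
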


\begin{remark}
It is an easy exercise to show that the degree of this braid is related to the geometry of the curve. Namely, this degree
equals the sum of its Milnor number and the local intersection with $\{x=0\}$ minus one. 
\end{remark}

 \section{Semilocal braid monodromy}\label{sec:semilocal}

Let us consider $f(x,y)\in\bc\{x\}[y]$ a reduced monic polynomial of degree~$n>0$, but we do not assume it is a Weiestra{\ss} polynomial. Following Hensel's Lemma, if
$f(0,y)=\prod_{j=1}^r (y+y_j)^{m_j}$ is the factorization, then 
\[
f(x,y)=\prod_{j=1}^r f_j(x,y+y_j)
\]
where $f_j(x,y)$ is a Weiestra{\ss} polynomial of degree~$m_j$. Following the ideas in \S\ref{sec:local}, we can find $\varepsilon_1,\dots,\varepsilon_r>0$ and $\delta>0$
such that the disks $\mathbb{D}^2_{\varepsilon_j}(y_j)$ are pairwise disjoint and the pairs $(\varepsilon_j,\delta)$ satisfy the conditions in Proposition-Definition~\ref{propdef:ed}.

Fix $\hat{\delta}$ as in \S\ref{sec:local}. The set of (pairwise distinct) roots of $f(\hat\delta,y)$ is the disjoint union of the sets 
of roots of $f_j(\hat\delta,y+y_j)$. As in Lemma~\ref{lem:puiseux_braid}, the choice of a path $\Gamma^j$ allows to construct
a braid $\tau_j\in\mathbb{B}_{m_j}$. Since the roots of $f_j(x,y+y_j)$ are in pairwise disjoint disks, we can construct a global 
path $\Gamma$ joining $\Gamma^j$ with $\Gamma^{j+1}$. This new path defines
a homomorphism
\begin{equation}\label{eq:prod_conm}
\prod_{j=1}^r \mathbb{B}_{m_j}\to\mathbb{B}_n.
\end{equation}
The images of each factor pairwise commute. As in \eqref{eq:localbraid} we can consider
\begin{equation}\label{eq:semilocalbraid}
\begin{tikzcd}[row sep=0pt]
\mathbb{D}^2_\delta\setminus\{0\}\rar["\hat{f}"]&T_n\\
x_0\rar[mapsto]&f(x_0,t)
\end{tikzcd}
\end{equation}
which induces a map $\hat{f}_*:\pi_1(\mathbb{D}^2_\delta\setminus\{0\};\hat{\delta})\to\pi_1(T_n;\hat{p}(\hat{\delta}))\equiv\mathbb{B}_n$. The image of 
a counterclockwise generator is obtained using the morphisms associated to each Weiestra{\ss} polynomials from the morphisms in  \eqref{eq:localbraid}
and the morphism~\eqref{eq:prod_conm}.
 \section{Global braid monodromy}\label{sec:global}

We are going to connect the previous results to recall the definition of braid monodromy. 
Let us fix $f(x,y)\in\bc[x,y]$ a polynomial of degree~$n>0$ in $y$ which is also monic in~$y$, i.e.,
\[
f(x,y)=y^n + \sum_{j=1}^{n} f_j(x) y^{n-j};
\]
since we are interested only in the zero locus $C:=f^{-1}(0)$, we assume that $f$ is reduced.
In particular $\disc_y(f)(x)\in\bc[x]$ is a non-zero polynomial and 
\[
\Delta_f:=\{t\in\bc\mid\disc_y(f)(t)=0\}
\]
is a finite set with cardinality $r\geq 0$. Moreover, we have a continuous map
\[
\begin{tikzcd}[row sep=0,/tikz/column 1/.append style={anchor=base east},/tikz/column 2/.append style={anchor=base west}]
\bc\setminus\Delta_f\rar["\tilde{f}"]&T_n\equiv Y_n\\
t\rar[mapsto]&f(t,y).
\end{tikzcd}
\]
We fix an element $x_0\in\bc\setminus\Delta_f$.
\begin{dfn}
A \emph{braid monodromy} of $f$ is a map
\[
\tilde{f}_*:\pi_1(\bc\setminus\Delta_f;x_0)\to\pi_1(T_n;\tilde{f}(x_0)).
\]
\end{dfn}

\begin{remark}
The condition of $f$ being monic is not necessary.
Braid monodromies can be defined even when the leading coefficient in~$y$ has positive degree in $x$, i.e.,
there are vertical asymptotes. How to use braid monodromy to obtain the fundamental group of the complement
is much more involved in the presence of vertical asymptotes.
\end{remark}

A first version of the Zariski-van Kampen method is the following theorem. We postpone its proof until we have more 
features of braid monodromy.

\begin{thm}[First version of Zariski-van Kampen]
Let $y_0\in\bc$ such that the point $(x_0,y_0)\notin C$.
Then $\pi_1(\bc^2\setminus C;(x_0,y_0))$ is isomorphic to the quotient of
$\pi_1(\bc\setminus\tilde{f}(x_0);y_0)$ (where $\tilde{f}(x_0)$ is seen as an element of $Y_n$)
by the subgroup normally generated by 
\[
\{\mu^\tau\cdot\mu^{-1}\mid\mu\in\pi_1(\bc\setminus\tilde{f}(x_0);y_0), \tau\in\pi_1(Y_n;\tilde{f}(x_0))\}.
\]
\end{thm}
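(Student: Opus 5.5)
The plan is to read the statement with $\tau$ ranging over the image $\tilde{f}_*(\pi_1(\bc\setminus\Delta_f;x_0))$ of the braid monodromy; this is the intended meaning, since an arbitrary braid would kill far too much. I will compute $\pi_1(\bc^2\setminus C)$ in two stages: first remove the vertical lines over $\Delta_f$ and use the resulting fibration, then add those lines back. Let $E:=(\bc\setminus\Delta_f)\times\bc\setminus C$ and let $\pi:E\to\bc\setminus\Delta_f$ be the first projection.

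\emph{Step 1: the fibration and its section.} Since $f$ is monic in $y$, the roots of $f(t,y)$ move continuously, stay pairwise distinct for $t\notin\Delta_f$, and have no vertical asymptotes. Hence $\pi$ is a locally trivial fibration with fibre $\bc$ minus $n$ points; local triviality can be built from isotopies of $\bc$ with compact support following the roots. Because the roots are bounded on compact sets, there is a continuous function $R(t)$ larger than all roots of $f(t,y)$. After moving base points I may take $y_0=R(x_0)$, and then $s(t):=(t,R(t))$ is a section of $\pi$. The base is a punctured plane, so it is aspherical and $\pi_2$ of the base vanishes. The homotopy exact sequence then gives
\[
1\to\pi_1(\bc\setminus\tilde{f}(x_0);y_0)\to\pi_1(E)\to\pi_1(\bc\setminus\Delta_f;x_0)\to1,
\]
and this sequence is split by $s_*$.

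\emph{Step 2: identify the conjugation action with the Artin action.} Let $g$ be a loop in the base and $\tau=\tilde{f}_*(g)$. I claim that $s_*(g)^{-1}\mu\, s_*(g)=\mu^\tau$, possibly up to the inverse convention matching the anti-homomorphism of Theorem~\ref{thm:artin_action}. To see this, pull the fibration back along $g$. This gives exactly the space $X_\tau=(\bc\times[0,1])\setminus\tau$ used to define $\Phi_\tau$, and the section plays the role of the path $t\mapsto(x_0,t)$, which avoids the braid because $R$ exceeds all roots. Since $\pi_1(\bc\setminus\Delta_f;x_0)$ is free on loops $g_1,\dots,g_r$, with $\tau_i=\tilde{f}_*(g_i)$ I get the presentation
\[
\pi_1(E)=\langle \mu_1,\dots,\mu_n,g_1,\dots,g_r\mid g_i^{-1}\mu_j g_i=\mu_j^{\tau_i}\rangle.
\]

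\emph{Step 3: fill in the vertical lines.} $\bc^2\setminus C$ is obtained from $E$ by adding the sets $L_t\setminus C$, where $L_t=\{x=t\}$ for $t\in\Delta_f$. These are smooth closed complex curves in $\bc^2\setminus C$, of real codimension $2$. By the standard codimension-two argument (van Kampen applied to a tubular neighbourhood), $\pi_1(\bc^2\setminus C)$ is the quotient of $\pi_1(E)$ by the normal closure of meridians of the $L_t$. The key point is that $s_*(g_i)$ is such a meridian, provided $g_i$ is a meridian of $t_i$ in the base. Near $y=R$ the line $L_{t_i}$ does not meet $C$, so a small disk transverse to $L_{t_i}$ inside the section region gives a meridian. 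Sliding along the section, which stays away from $C$, conjugates this meridian to $s_*(g_i)$.

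Killing all $g_i$ in the presentation of Step~2 leaves $\mathbb{F}_n$ modulo the relations $\mu=\mu^{\tau_i}$. Since the $\tau_i$ generate the image of $\tilde{f}_*$, and the relations for products of the $\tau_i$ follow from those for the $\tau_i$, this is exactly the stated quotient.

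I expect Step~3 to be the main obstacle. It requires justifying that removing the real codimension-$2$ set $\bigcup_t (L_t\setminus C)$ changes $\pi_1$ only by meridians; here $L_t\setminus C$ is non-compact and passes near the singular points of $C$ in that fibre. It also requires checking carefully that $s_*(g_i)$ really is a meridian and not some other element. A smaller technical point is the orientation convention in Step~2, which has to match the anti-monomorphism of Theorem~\ref{thm:artin_action}.
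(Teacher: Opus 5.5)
Your proposal is correct and follows essentially the paper's route (the paper proves the second version and deduces this one from it): the locally trivial fibration over $\bc\setminus\Delta_f$, a splitting by a section at large $\abs{y}$ (your $t\mapsto(t,R(t))$ is a global version of the paper's line $y=y_0$ with $\abs{y_0}\gg 1$), identification of the conjugation action with the braid monodromy action, and killing the meridians $s_*(g_i)$ of the vertical lines; you also rightly read $\tau$ as ranging over the image of $\tilde{f}_*$. The obstacle you fear in Step~3 does not arise: $L_t\setminus C$ is a closed, smooth, connected submanifold of $\bc^2\setminus C$, so the tubular-neighbourhood and van Kampen argument applies directly, and this is exactly the content of the lemma of Fujita that the paper cites.
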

Since all the above groups are finitely presented, it is clear that 
$\pi_1(\bc^2\setminus C;(x_0,y_0))$ has a finite presentation.

\begin{dfn}
A \emph{braid monodromy factorization} of  a polynomial $f$ is given by a 
tuple $(\tilde{f}_*(\gamma_1),\dots,\tilde{f}_*(\gamma_r))\in(\mathbb{B}_n)^r$  where $(\gamma_1,\dots,\gamma_r)$ is a pseudogeometric basis of $\pi_1(\bc\setminus\Delta_f;x_0)$.
\end{dfn}
It is clear that a braid monodromy factorization determines also $\pi_1(\bc\setminus\tilde{f}(x_0);y_0)$. 
Let us see how many braid monodromy factorizations $f$ has.

\begin{dfn}
The \emph{Hurwitz action} of $\mathbb{B}_n\times\mathbb{B}_r$ on $(\mathbb{B}_n)^r$ is defined as follows. 
If $\tau\in\mathbb{B}_n$, then
\[
(\tau_1,\dots,\tau_r)^\tau:=(\tau_1^\tau,\dots,\tau_r^\tau).
\]
Let $\eta_i$ be an Artin generator of $\mathbb{B}_r$. Then
\[
(\tau_1,\dots,\tau_r)^{\eta_i}:=(\tau_1,\dots,\tau_{i-1},\tau_i\cdot\tau_{i+1}\cdot\tau_i^{-1},\tau_{i},\tau_{i+2},\dots,\tau_r).
\]
\end{dfn}

Given an element of $(\mathbb{B}_n)^r$ there are a couple of linked invariants which are simpler.

\begin{dfn}
Let $\boldsymbol{\tau}:=(\tau_1,\dots,\tau_r)\in(\mathbb{B}_n)^r$. The \emph{monodromy group}
$G(\boldsymbol{\tau})$ is the subgroup of $\mathbb{B}_n$ generated by the coordinates of~$\boldsymbol{\tau}$.
The \emph{pseudo-Coxeter} element of~$\boldsymbol{\tau}$ is $\mathbf{e}(\boldsymbol{\tau}):=\tau_1\cdot\ldots\cdot\tau_r$. 
\end{dfn}

These two elements are \emph{almost} fixed by Hurwitz moves. The following result is straightforward.

\begin{lema}
Let $\boldsymbol{\tau}\in(\mathbb{B}_n)^r$. The $\mathbb{B}_n$-conjugacy class of the pair $(G(\boldsymbol{\tau}),\mathbf{e}(\boldsymbol{\tau}))$ is an invariant of the orbit of $\boldsymbol{\tau}$.
\end{lema}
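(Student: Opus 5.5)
It suffices to check the statement on generators of $\mathbb{B}_n\times\mathbb{B}_r$. The action is a group action, and the $\mathbb{B}_n$-conjugacy class of the pair is a class function on tuples. So if each generator sends $\boldsymbol{\tau}$ to a tuple whose pair is conjugate to $(G(\boldsymbol{\tau}),\mathbf{e}(\boldsymbol{\tau}))$, then by induction on word length the same holds for every element of the orbit. Inverses of generators are covered automatically, because the conjugacy relation is symmetric and the inverse move undoes the move. We therefore treat two kinds of generators: elements $\tau\in\mathbb{B}_n$ acting by simultaneous conjugation, and Artin generators $\eta_i$ of $\mathbb{B}_r$.

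For $\tau\in\mathbb{B}_n$, the new tuple is $(\tau_1^\tau,\dots,\tau_r^\tau)$. Conjugation by $\tau$ is an automorphism of $\mathbb{B}_n$. It therefore sends the subgroup generated by the $\tau_j$ onto the subgroup generated by the $\tau_j^\tau$, so $G(\boldsymbol{\tau}^\tau)=G(\boldsymbol{\tau})^\tau$. It also respects products, so $\mathbf{e}(\boldsymbol{\tau}^\tau)=\tau_1^\tau\cdots\tau_r^\tau=\mathbf{e}(\boldsymbol{\tau})^\tau$. Hence the pair is conjugated by the single element $\tau$, which is exactly the allowed ambiguity.

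For $\eta_i$, the $i$-th and $(i+1)$-th entries $(\tau_i,\tau_{i+1})$ are replaced by $(\tau_i\tau_{i+1}\tau_i^{-1},\tau_i)$, and all other entries are unchanged. The product of the two changed entries is $\tau_i\tau_{i+1}\tau_i^{-1}\cdot\tau_i=\tau_i\tau_{i+1}$, so $\mathbf{e}$ is literally unchanged. For the monodromy group, the new generators lie in $\langle\tau_i,\tau_{i+1}\rangle$. Conversely, $\tau_i$ appears directly among the new entries, and $\tau_{i+1}=\tau_i^{-1}(\tau_i\tau_{i+1}\tau_i^{-1})\tau_i$ lies in the subgroup generated by the new entries. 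So $G$ is equal, not merely conjugate. In this case the pair is fixed, and in particular its conjugacy class is preserved.

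There is no real obstacle in this argument. The only point needing care is the convention for the conjugation exponent ($\tau_i^\tau=\tau^{-1}\tau_i\tau$ versus $\tau\tau_i\tau^{-1}$), and the argument works for either choice. One should also note that the $\mathbb{B}_n$-factor and the $\mathbb{B}_r$-factor actions commute, so that the action of $\mathbb{B}_n\times\mathbb{B}_r$ is well defined. This holds because conjugating every entry commutes with the Hurwitz substitution. This well-definedness is used implicitly when we reduce to generators.
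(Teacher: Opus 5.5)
Your proof is correct and is the direct verification the paper has in mind when it calls the lemma ``straightforward''; the paper gives no written proof. Simultaneous conjugation by $\tau\in\mathbb{B}_n$ conjugates both $G(\boldsymbol{\tau})$ and $\mathbf{e}(\boldsymbol{\tau})$ by the same element, while each Hurwitz move $\eta_i$ leaves both $G(\boldsymbol{\tau})$ and $\mathbf{e}(\boldsymbol{\tau})$ literally unchanged, since $\tau_i\tau_{i+1}\tau_i^{-1}\cdot\tau_i=\tau_i\tau_{i+1}$.
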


A direct consequence of Theorem~\ref{thm:artin_action} is the following description of braid monodromy
factorizations.

\begin{lema}
The braid monodromy factorizations of $f$ form an orbit of the action  of 
$\mathbb{B}_n\times\mathbb{B}_r$ on $(\mathbb{B}_n)^r$. If 
$\boldsymbol{\tau}\in(\mathbb{B}_n)^r$ is a braid monodromy factorization of~$f$, then 
the isomorphism type of $\pi_1(\bc^2\setminus C;(x_0,y_0))$ is determined by the
conjugacy class of $G(\boldsymbol{\tau})$.
\end{lema}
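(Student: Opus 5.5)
The lemma has two parts, and I will treat them separately. For the orbit statement, I will first track what happens to the factorization under each choice made in its definition. There are three choices: the identification of $\pi_1(T_n;\tilde{f}(x_0))$ with $\mathbb{B}_n$, the pseudogeometric basis of $\pi_1(\bc\setminus\Delta_f;x_0)$, and the base point $x_0$.

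Changing the identification with $\mathbb{B}_n$, say the path $\Gamma$ or the motion of end points to $\{1,\dots,n\}$, changes it by an inner automorphism. So every coordinate is conjugated by the same $\tau\in\mathbb{B}_n$, which is the $\mathbb{B}_n$-factor of the action. Moving $x_0$ along a path gives compatible isomorphisms on both fundamental groups, and it again amounts to such a simultaneous conjugation.

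The basis is the substantial part. The disc case is covered by Theorem~\ref{thm:artin_action}, applied to $\pi_1(\bc\setminus\Delta_f;x_0)\cong\mathbb{F}_r$: any two geometric bases differ by the action of a unique element of $\mathbb{B}_r$. The formula $\mu_i^{\sigma_j}$ shows that an Artin generator $\eta_i$ sends $(\gamma_1,\dots,\gamma_r)$ to $(\dots,\gamma_i\gamma_{i+1}\gamma_i^{-1},\gamma_i,\dots)$. Since $\tilde{f}_*$ is a homomorphism, the image tuples are related by exactly the Hurwitz move defined above. The inverse generators are handled the same way.

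Pseudogeometric bases not attached to a disc need one more step. I would place $x_0$ on a large circle, or move it there by a path, which is already accounted for by conjugation. Two pseudogeometric bases then differ by conjugation by an element $\beta$ of the fundamental group together with a braid action. Conjugating every $\gamma_i$ by $\beta$ produces simultaneous conjugation by $\tilde{f}_*(\beta)\in\mathbb{B}_n$.

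Conversely, every element of the orbit is realized, because $\mathbb{B}_r$ acts transitively on geometric bases, again by Theorem~\ref{thm:artin_action}, and any conjugation of the identification is allowed. I expect this reduction of arbitrary pseudogeometric bases to geometric ones up to conjugation to be the main obstacle. I would handle it by noting that $(\gamma_1\cdots\gamma_r)^{-1}$ is a meridian of $\infty$, so it is conjugate to the boundary loop of a large disc.

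For the second assertion I use the first version of Zariski--van Kampen. By that theorem, $\pi_1(\bc^2\setminus C)$ is the quotient of $\mathbb{F}_n$ by the relations $\mu^\tau=\mu$ for $\tau$ in the image of $\tilde{f}_*$, and this image is $G(\boldsymbol{\tau})$ since $\gamma_1,\dots,\gamma_r$ generate. It suffices to impose the relations for $\tau$ in a generating set, because $\mu^{\tau\tau'}\mu^{-1}$ lies in the normal closure of the relations for $\tau$ and $\tau'$.

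If $G$ is replaced by $\beta^{-1}G\beta$, then applying the automorphism $\phi_\beta$ of $\mathbb{F}_n$ maps one normal subgroup onto the other, so the quotients are isomorphic. Hence the isomorphism type depends only on the conjugacy class of $G(\boldsymbol{\tau})$.
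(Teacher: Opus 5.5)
Your proposal is correct and follows the paper's route: the paper records the orbit statement as a direct consequence of Theorem~\ref{thm:artin_action} and the group statement as a consequence of the Zariski--van Kampen description. You fill in the details the paper omits: simultaneous conjugation coming from the choice of identification and of base point, the reduction of pseudogeometric bases to geometric ones by conjugating with $\beta$, and the automorphism $\phi_\beta$ carrying one normal closure onto the other. One point needs more care: the Hurwitz formula for a generator holds relative to the reference basis defining the action, so to iterate it you should note that Hurwitz moves commute with automorphisms applied entrywise, which is what makes the Hurwitz orbit of a geometric basis coincide with its $\mathbb{B}_r$-orbit from Theorem~\ref{thm:artin_action}.
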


The first version of Zariski-van Kampen Theorem is a direct consequence of this second version.

\begin{thm}[Second version of Zariski-van Kampen]\label{thm:zvk2}
Let $(\tau_1,\dots,\tau_r)\in(\mathbb{B}_n)^r$ be a braid monodromy factorization and 
let $\mu_1,\dots,\mu_n$ be a basis of the free group. Then
$\pi_1(\bc^2\setminus C;(x_0,y_0))$ is generated by $\mu_1,\dots,\mu_n$ 
with the relations 
\[
\{\mu_i^{\tau_j}\cdot\mu_i^{-1}\mid 1\leq j\leq r, 1\leq i<n\}.
\]
\end{thm}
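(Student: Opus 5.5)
The plan is to realize $\bc^2\setminus C$ as the total space of a locally trivial fibration over $\bc\setminus\Delta_f$, and then add back the fibers over the points of $\Delta_f$ one at a time using Seifert--van Kampen.

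\textbf{Step 1: the fibration and its monodromy.} Let $E:=(\bc\setminus\Delta_f)\times\bc\setminus C$. Because $f$ is monic in $y$, there are no vertical asymptotes. The projection $(x,y)\mapsto x$ therefore restricts to a locally trivial fibration $E\to\bc\setminus\Delta_f$, whose fiber over $x_0$ is $\bc\setminus\tilde f(x_0)$. To see local triviality, note that the roots of $f(x,\cdot)$ move continuously and stay in a bounded region over compact sets. An isotopy of $\bc$ carrying the roots along and fixed outside a large disk then trivializes the fibration. We can also choose a section $s$ near infinity in $y$, say $y=R$ with $R$ huge over a disk containing all the relevant loops. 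Since the base is a free group, the long exact sequence splits:
\[
\pi_1(E)\cong\mathbb{F}_n\rtimes\pi_1(\bc\setminus\Delta_f;x_0).
\]
Here $\gamma_j$ acts on $\mathbb{F}_n$ via the geometric action of $\tau_j=\tilde f_*(\gamma_j)$. This follows from the definition of $\Phi_\tau$ through $X_\tau$, since the restriction of $E$ over a loop is exactly $X_\tau$, glued at the ends. So $\pi_1(E)$ is generated by $\mu_1,\dots,\mu_n,\gamma_1,\dots,\gamma_r$ with relations $\gamma_j^{-1}\mu_i\gamma_j=\mu_i^{\tau_j}$, up to the convention for the side of the action.

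\textbf{Step 2: adding the singular fibers.} We have
\[
\bc^2\setminus C=E\cup\bigcup_j\bigl(\{t_j\}\times\bc\setminus C\bigr).
\]
Take small disks $D_j$ around the points $t_j\in\Delta_f$. The key claim is that adding the preimage of $D_j$ kills exactly the lift $s_*(\gamma_j)$. The lift is possible because the section $y=R$ extends over $D_j$ and is contractible there. Concretely, the preimage of $D_j$ minus $C$ retracts onto the fiber over a boundary point, modulo the monodromy. Its $\pi_1$ is the quotient of $\mathbb{F}_n$ by $\mu^{\tau}=\mu$, where $\tau$ is the local braid. This uses the good representatives of Proposition-Definition~\ref{propdef:ed} and the semilocal description of \S\ref{sec:semilocal}: each local piece $\mathbb{D}_\delta\times\mathbb{D}_\varepsilon\setminus C$ is a cone-like neighborhood, and the conic structure of the Milnor fibration gives the retraction. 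Seifert--van Kampen then shows that the effect of the gluing is to impose $\gamma_j=1$. Doing this for all $j$, and using that $\gamma_1,\dots,\gamma_r$ generate the base group, leaves generators $\mu_i$ and relations $\mu_i^{\tau_j}=\mu_i$.

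\textbf{Step 3: trimming the relations.} It remains to discard $i=n$. By \ref{a2}, $\tau_j$ fixes the product $\mu_1\cdots\mu_n$, so the relation for $\mu_n$ follows from the other $n-1$. Finally, the result is independent of the choice of basis and of the factorization. Changing the geometric basis or applying Hurwitz moves changes the relation set only by consequences within the same normal closure, by Theorem~\ref{thm:artin_action} and the orbit lemma.

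The main obstacle is Step 2: proving rigorously that the neighbourhood of a singular fiber contributes only the relation $\gamma_j=1$ and nothing else. I would handle it with the local conic structure, showing that the preimage of $D_j$ minus $C$ deformation retracts onto the preimage of the boundary circle with the singular-fiber part coned off. The alternative is to argue directly that its fundamental group is the quotient of the mapping torus group by the section loop, and then apply van Kampen.
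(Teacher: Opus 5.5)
Your proposal is correct, and its skeleton matches the paper's. You remove the vertical lines over $\Delta_f$ and use that $\bc^2\setminus C_\varphi\to\bc\setminus\Delta_f$ is a locally trivial fibration; the paper obtains this by blowing up $[0:1:0]$ and a properness/covering argument for the pair, you by an explicit isotopy. You then split the homotopy sequence with a horizontal line $y=y_0$, $\abs{y_0}\gg 1$. The real difference is the step you call the main obstacle, and the paper handles it with no local analysis at all. The lift $s_*(\gamma_j)$ is a meridian of the vertical line $\{x=t_j\}$, because the section meets that line transversally at a point far from $C$. The paper then applies the standard fact it quotes as Fujita's Lemma~4.18: for a closed hypersurface $S$ in a connected complex manifold $X$, the map $\pi_1(X\setminus S)\to\pi_1(X)$ is onto, with kernel normally generated by meridians of the irreducible components of $S$. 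Here $S_j=(\{t_j\}\times\bc)\setminus C$ is a smooth connected curve, closed in $\bc^2\setminus C$, so all its meridians are conjugate. Hence the kernel of $\pi_1(E)\to\pi_1(\bc^2\setminus C)$ is $\ns{s_*(\gamma_1),\dots,s_*(\gamma_r)}$. This is only a tubular neighbourhood of $S_j$ plus Seifert--van Kampen, and the singularities of $C$ on that fibre never enter. Your route also works: good representatives, the conic structure at each point of $C\cap\{x=t_j\}$, and the semilocal gluing of \S\ref{sec:semilocal}. But it forces you to treat several singular and smooth points on one fibre and glue the local retractions, which is a lot of work for a statement about $\pi_1$. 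What it buys is finer, local information. It identifies $\pi_1\bigl((D_j\times\bc)\setminus C\bigr)$ with the quotient of the fibre group by $\mu^{\tau}=\mu$ for the local braid $\tau$, and, pushed further, gives the homotopy type of the tube. That is the kind of input behind the Puiseux refinement (Theorem~\ref{thm:zvk3}) and Libgober's homotopy-type statement (Theorem~\ref{thm:lib}); the meridian-killing lemma only controls the fundamental group.
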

Since $\mu_1\cdot\ldots\cdot\mu_n$ is fixed by the braid action, we do not need the relations
for $\mu_n$.

\begin{proof}[Idea of the proof]
Let $C_\varphi$ be the union of $C$ and the vertical lines $x=x'$ with $x'\in\Delta_f$.
The vertical projection $\pi:\bc^2\setminus C_\varphi\to\bc\setminus\Delta_f$ is a locally trivial fibration
with fiber homeomorphic to $\bc\setminus\{n\text{ points}\}$. 

To prove this, we consider $\bc^2=\bp^2\setminus L_\infty$, and let $P$ be the 
point in the line at infinity where all the vertical lines meet.
Let $\Sigma$ be the blow-up of this point, and let $\pi:\Sigma\to\bp^1\equiv\bc\cup\{\infty\}$ the completion of the vertical projection.
Let $E$ be the exceptional component and let $V:=\pi^{-1}(\bc\setminus\Delta_f)$.

Let us consider the pair $(V,(C\cup E)\cap V)$. The restriction of $\pi$ to each element of the pair is proper. The restriction $\pi_{|V}$
is a trivial fibration and the restriction to $(C\cup E)\cap V$ is a local homeomorphism where the fibers
have finite and constant cardinality.
Hence, it is a covering, i.e., a locally trivial fibration with discrete fibers.
We have seen that $\pi:(V,(C\cup E)\cap V)\to\bc\setminus\Delta_f$
is  a locally trivial fibration of pairs. As a consequence, for the complement we also have 
that $\pi:\bc^2\setminus C_\varphi\to\bc\setminus\Delta_f$ is a locally trivial fibration
with fiber a $d$-punctured affine plane.

Choose a base point $(x_0,y_0)$ with $\abs{x_0},\abs{y_0}\gg 1$.
Since the base has the homotopy type of a graph, the last terms of the long exact sequence of homotopy are:
\[
\begin{tikzcd}[column sep=.7cm]
1\rar[]&\pi_1(\bc\setminus\tilde{f}(x_0);y_0)\rar[]&\pi_1(\bc^2\setminus C_\varphi;(x_0,y_0))\rar[]&\pi_1(\bc\setminus\Delta_f;x_0)\rar[]&1.
\end{tikzcd}
\]
Since the last group is free, this sequence splits and actually this is done using the inclusion in the line $y=y_0$ (using compacity, this works
as long as $\abs{y_0}$ is big enough). The lifting of the meridians in $\pi_1(\bc\setminus\Delta_f;x_0)$ are meridians of the vertical lines.
We can compute
its fundamental group and then kill the meridians of the vertical lines, see~\cite[Lemma~4.18]{Fujita1982}.
\end{proof}

In order to be closer to Zariski's ideas, let us define a new concept.
Let us fix a braid monodromy factorization $(\tau_1,\dots,\tau_r)$.
Given $\tau_i$, as an application of Proposition~\ref{prop:positive},
we can assume that $\tau_i=\alpha_i^{-1}\cdot\beta_i\cdot\alpha_i$, where 
$\beta_i$ is a positive braid. Since $\tau_i$ is the image of a meridian
this decomposition can be obtained geometrically from the decomposition of the
meridian, see Definition~\ref{def:meridian}; the identification
between braids with distinct end points and elements of $\mathbb{B}_n$ of 
\S\ref{sec:braid} provides the 
conjugation decomposition of~$\tau_i$. Then, using the ideas in \S\ref{sec:semilocal}, if there are
several singular points of the projection, then $\beta_i$ decomposes
as a product $\beta_{i,1}\cdot\ldots\cdot\beta_{i,s_i}$, $s_i\geq 1$; 
there are $n_{i, j}$ consecutive strands involved in $\beta_{i, j}$,
starting at the strand $t_{i,j}$,
with the condition that $n_{i,1}+\dots+n_{i,s_i}\leq n$.
The tuples of strands are pairwise disjoint and hence the braid factors 
pairwise commute.

\begin{remark}
Although we can give a particular decomposition of the braid~$\tau$ using the decomposition of a meridian
we can choose any decomposition, as it can be deduced from \cite{kt:00,Carmona2003}.
\end{remark}

\begin{dfn}\label{def:puiseux1}
The tuple $((\alpha_1,(\beta_{1,1},\dots,\beta_{1,s_1})),\dots,(\alpha_r,(\beta_{r,1},\dots,\beta_{r,s_r})))$
is a \emph{Puiseux factorization} of $f$.
\end{dfn}

The third version of the Zariski-van Kampen Theorem is the closest one to the original ideas.

\begin{thm}[Third version of Zariski-van Kampen]\label{thm:zvk3}
Consider a Puiseux factorization of~$f$ given by
$((\alpha_1,(\beta_{1,1},\dots,\beta_{1,s_1})),\dots,(\alpha_r,(\beta_{r,1},\dots,\beta_{r,s_r})))$.
Let $\mu_1\!,\!\dots\!,\!\mu_n$ be a basis of the free group. 
Then
$\pi_1(\bc^2\setminus C;(x_0,y_0))$ is generated by $\mu_1,\dots,\mu_n$ 
with the relations 
\begin{equation}\label{eq:puiseux}
\left\{\left(\mu_k^{\beta_{i,j}}\cdot\mu_k^{-1}\right)^{\alpha_i}\middle|\, 1\leq i\leq r,\ 1\leq j\leq s_i,\ 
t_{i,j}\leq k< t_{i,j} + n_{i,j} \right\}.
\end{equation}
\end{thm}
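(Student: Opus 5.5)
We reduce Theorem~\ref{thm:zvk3} to Theorem~\ref{thm:zvk2} by showing that, for each $i$, the normal subgroup generated by the relations $\{\mu_k^{\tau_i}\cdot\mu_k^{-1}\mid 1\leq k\leq n\}$ coincides with the normal subgroup generated by the relations in \eqref{eq:puiseux} attached to $i$. Since \eqref{eq:action} is an anti-action, $\tau_i=\alpha_i^{-1}\cdot\beta_i\cdot\alpha_i$ gives $\mu^{\tau_i}=((\mu^{\alpha_i^{-1}})^{\beta_i})^{\alpha_i}$, and $\phi_{\alpha_i}$ is an automorphism of $\mathbb{F}_n$. Setting $\nu:=\mu^{\alpha_i^{-1}}$, which runs over all of $\mathbb{F}_n$ as $\mu$ does, the relations $\mu^{\tau_i}=\mu$ for all $\mu$ become $\phi_{\alpha_i}(\nu^{\beta_i}\cdot\nu^{-1})=1$ for all $\nu$. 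Hence it suffices to prove the following: the normal closure $N_i$ of $\{\nu^{\beta_i}\nu^{-1}\mid\nu\in\mathbb{F}_n\}$ equals the normal closure of $\{\mu_k^{\beta_{i,j}}\mu_k^{-1}\}$ with $k$ in the ranges of \eqref{eq:puiseux}. Applying $\phi_{\alpha_i}$, which carries normal closures to normal closures, then gives the statement.

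The first reduction is from all $\nu$ to the basis elements. The map $\nu\mapsto\nu^{\beta}$ is a homomorphism, and $\nu^\beta\equiv\nu$ modulo a normal subgroup $N$ for every basis element implies the same for all products and inverses. Indeed, $(ab)^\beta(ab)^{-1}=a^\beta\,(b^\beta b^{-1})\,(a^{\beta})^{-1}\cdot a^\beta a^{-1}$. So $N_i$ is the normal closure of $\{\mu_k^{\beta_i}\mu_k^{-1}\}_{k=1}^n$; this is also the argument behind the first version implying the second.

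Next we split $\beta_i=\beta_{i,1}\cdot\ldots\cdot\beta_{i,s_i}$. Each $\beta_{i,j}$ lies in the image of $\mathbb{B}_{n_{i,j}}$ under \eqref{eq:prod_conm}, acting on the consecutive strands $t_{i,j},\dots,t_{i,j}+n_{i,j}-1$. With the path $\Gamma$ from \S\ref{sec:semilocal} and the explicit formula for $\mu_i^{\sigma_j}$, it fixes every $\mu_k$ outside that range and preserves the subgroup $F_{i,j}:=\langle\mu_k\mid t_{i,j}\leq k<t_{i,j}+n_{i,j}\rangle$. Because the strand blocks are disjoint, for $k$ in block $j$ we get $\mu_k^{\beta_i}=\mu_k^{\beta_{i,j}}$, and for $k$ in no block $\mu_k^{\beta_i}=\mu_k$. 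The generators of $N_i$ are therefore exactly the elements $\mu_k^{\beta_{i,j}}\mu_k^{-1}$ with $t_{i,j}\leq k<t_{i,j}+n_{i,j}$, together with trivial elements, which is \eqref{eq:puiseux} before the outer conjugation by $\alpha_i$. Since $\{\nu\mapsto\nu^{\alpha_i}\}$ is the automorphism $\phi_{\alpha_i}$, the relations $\phi_{\alpha_i}(\mu_k^{\beta_{i,j}}\mu_k^{-1})=(\mu_k^{\beta_{i,j}}\mu_k^{-1})^{\alpha_i}$ normally generate $\phi_{\alpha_i}(N_i)$, which is the normal closure of the relations of Theorem~\ref{thm:zvk2} for $\tau_i$. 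Taking the union over $i$ completes the argument.

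The main obstacle is the bookkeeping in the splitting step. One must check that the embedding \eqref{eq:prod_conm}, defined with the concatenated path $\Gamma$, really sends $\mathbb{B}_{m_j}$ into the subgroup generated by $\sigma_{t},\dots,\sigma_{t+n-2}$ for a block of consecutive indices. This follows because $\Gamma$ visits each disk's roots consecutively. One must also check that the indexing of the $\mu_k$ matches the indexing of the strands; for this we take the basis $\mu_1,\dots,\mu_n$ to be the geometric basis adapted to the same $\Gamma$, as in Figure~\ref{fig:base}. The dropped relation for the last index in each block is harmless, since within a block the product of the $\mu_k$ is fixed by $\beta_{i,j}$ by~\ref{a2}. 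The remaining steps are formal manipulations with the anti-action.
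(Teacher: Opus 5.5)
Your proposal is correct and follows essentially the same route as the paper: it substitutes $\mu\mapsto\mu^{\alpha_j}$ in the normal closure of $\{\mu^{\tau_j}\mu^{-1}\}$, uses $\alpha_j\tau_j=\beta_{j,1}\cdots\beta_{j,s_j}\,\alpha_j$ to reach the normal closure of $\{(\mu_i^{\beta_{j,1}\cdots\beta_{j,s_j}}\mu_i^{-1})^{\alpha_j}\}$, and then discards trivial relations, which is the block-by-block bookkeeping you spell out in more detail. One minor point: the range $t_{i,j}\leq k<t_{i,j}+n_{i,j}$ already contains all $n_{i,j}$ indices of the block, so no relation is actually dropped, and your closing remark about the last index is unnecessary, though harmless.
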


\begin{proof}
Let us denote by $\tau_j$ the $j$-factor of the braid monodromy factorization. Let us denote 
by $\ns{\bullet}$ the smallest normal subgroup generated by $\bullet$. Note that 
\begin{gather*}
\ns{\underset{1\leq i<n}{\mu_i^{\tau_j}\cdot\mu_i^{-1}}}=
\ns{\underset{\mu\in\mathbb{F}_n}{\mu^{\tau_j}\cdot\mu^{-1}}}
=
\ns{\underset{\mu\in\mathbb{F}_n}{\mu^{\alpha_j\cdot\tau_j}\cdot(\mu^{-1})^{\alpha_j}}}
\\
=
\ns{\underset{\mu\in\mathbb{F}_n}{\mu^{\left(\prod_{k=1}^{s_j}\beta_{j,k}\right)\cdot\alpha_j}\cdot(\mu^{-1})^{\alpha_j}}}=
\ns{\underset{1\leq i<n}{\left(\mu_i^{\prod_{k=1}^{s_j}\beta_{j,k}}\cdot\mu_i^{-1}\right)^{\alpha_j}}}.
\end{gather*}
We obtain the statement by the elimination of the trivial elements.
\end{proof}
This is not only more economic, but it is also part of a stronger theorem on the homotopy type due to Libgober.
Libgober's statement has some extra hypotheses about genericity but actually his proof
provides this more general statement.

\begin{thm}[\cite{Libgober1986}]\label{thm:lib}
Let $X$ be the $CW$-complex associated to the presentation of $\pi_1(\bc^2\setminus C)$ in Theorem{\rm~\ref{thm:zvk3}}.
Then, $X$ is homotopy equivalent to $\bc^2\setminus C$.
\end{thm}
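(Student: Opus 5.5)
The plan is to build $\bc^2\setminus C$ up to homotopy in the same way the proof of Theorem~\ref{thm:zvk2} builds its fundamental group, but keeping track of the whole homotopy type rather than only $\pi_1$.

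\textbf{Complement of the vertical lines.} Choose a disk $\mathbb{D}\subset\bc$ containing $\Delta_f=\{x_1,\dots,x_r\}$ in its interior, with $x_0\in\partial\mathbb{D}$. The complement $\bc^2\setminus C$ deformation retracts onto $\pi^{-1}(\mathbb{D})\setminus C$. The base $\mathbb{D}\setminus\Delta_f$ deformation retracts onto a wedge of $r$ circles realizing the pseudogeometric basis $\gamma_1,\dots,\gamma_r$. By the local triviality established in the proof of Theorem~\ref{thm:zvk2}, the part of $\bc^2\setminus C_\varphi$ lying over this wedge is homotopy equivalent to a union of $r$ mapping tori of the fiber $F=\bc\setminus\{n\text{ points}\}$, glued along the fiber over $x_0$. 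The monodromy of the $i$-th mapping torus is a homeomorphism representing $\tau_i$. Replacing $F$ by a wedge of $n$ circles $\bigvee S^1_{\mu_k}$ and the monodromy by a cellular map inducing $\mu\mapsto\mu^{\tau_i}$, each mapping torus gets a CW structure: the $0$-cell, the $1$-cells $\mu_1,\dots,\mu_n$ and $\gamma_i$, and the $2$-cells $\gamma_i\mu_k\gamma_i^{-1}=(\mu_k)^{\tau_i}$.

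\textbf{Filling the vertical lines back in.} Now add back the punctured neighbourhoods of the fibres over each $x_i$. Choose a small disk $\mathbb{D}_i$ around $x_i$. By the semilocal analysis of \S\ref{sec:semilocal} together with Proposition-Definition~\ref{propdef:ed}, the piece $\pi^{-1}(\mathbb{D}_i)\setminus C$ decomposes as follows: a product part away from the $s_i$ singular points of the fibre, and, near each singular point, a Milnor-ball-type piece $B_{i,j}\setminus C$ that is a cone-like neighbourhood. Gluing $\pi^{-1}(\mathbb{D}_i)\setminus C$ to the rest is what kills $\gamma_i$ and imposes the local relations.

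The key point is that $B_{i,j}\setminus C$ is homotopy equivalent to the complement of the local link. The plan is to show it is modelled, relative to its boundary piece over $\partial\mathbb{D}_i$, by attaching one $2$-cell killing $\gamma_i$ and then cells realizing exactly the relations $(\mu_k^{\beta_{i,j}}\mu_k^{-1})^{\alpha_i}$ for $t_{i,j}\le k<t_{i,j}+n_{i,j}$. In other words, the local complement should be the CW complex of the Wirtinger-type presentation coming from the positive braid $\beta_{i,j}$ of Proposition~\ref{prop:positive}, and this is where positivity is used. For this I would use that the local complement is a $K(\pi,1)$: it fibres over $S^1$ with Milnor-fibre fibre when the projection is generic, and in general it is a Seifert/graph manifold complement of an iterated torus link, by Proposition~\ref{prop:isotopy1}. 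The presentation complex for a positive braid closure has the correct Euler characteristic and is aspherical, so these two spaces are homotopy equivalent.

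\textbf{Gluing and cancelling cells.} The spaces are assembled by van Kampen–Whitehead gluing along aspherical pieces, so the glued space is homotopy equivalent to the glued CW complexes. Two cancellations then take place. Each $1$-cell $\gamma_i$ cancels against the $2$-cell that kills it. The extra $2$-cells indexed by $k=t_{i,j}+n_{i,j}$, together with the $2$-cells coming from strands not involved in any $\beta_{i,j}$, cancel against $3$-cells, or else collapse, because they are consequences of the others: the product of the meridians is fixed by the braid action. These elementary collapses are exactly the eliminations performed in the proof of Theorem~\ref{thm:zvk3}. After them, what remains is the $2$-complex $X$, giving the homotopy equivalence.

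\textbf{Main obstacle.} The hardest step is the local one: showing that the relative homotopy type of the semilocal piece is exactly that of the presentation complex, and not merely that the fundamental groups agree. I would first try to handle it by an induction on the Puiseux and carrousel levels, using the cylinders in the proof of Proposition~\ref{prop:isotopy1}, gluing one torus-knot stratum at a time, with the case $y^n-x^m$ (Example~\ref{ex:onepair}) as the base case, where the comparison can be checked directly.
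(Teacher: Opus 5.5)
The paper proves nothing here itself; it defers to Libgober. Your outer frame is the natural one: retract onto the preimage of small disks $\mathbb{D}_i$ around $\Delta_f$ joined to $x_0$, then glue along the fibre over $x_0$.

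\textbf{The local step is not proved.} It is the whole content of the theorem, and your justification is circular. That the presentation complex of a positive braid closure is aspherical is not a fact available independently of the theorem. For a non-split closure it \emph{is} the local case of the theorem. Positivity plays no role: any conjugation form with $\beta_{i,j}$ supported on its block works. The carrousel induction is not needed either. Your Euler characteristic remark can be made rigorous, but only with real input. If the local complement is aspherical and your $2$-complex maps to it by a $\pi_1$-isomorphism with equal $\chi$, Schanuel's lemma makes $\pi_2$ of the complex a stably free $\mathbb{Z}[\pi]$-module of rank $0$, and Kaplansky's theorem then kills it.

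\textbf{The cell bookkeeping is wrong.} Relative to $\pi^{-1}(\partial\mathbb{D}_i)\setminus C$, the piece over $\mathbb{D}_i$ adds one $2$-cell (killing $\gamma_i$) and some $3$-cells. It does not add relator $2$-cells; those are already the $2$-cells of the mapping torus. Your model never produces $3$-cells. A relator that is merely a consequence of the others does not collapse: it contributes a wedge summand $S^2$. For $f=y^2-x$, keeping both $\mu_1^{\sigma_1}\mu_1^{-1}$ and $\mu_2^{\sigma_1}\mu_2^{-1}$ gives $S^1\vee S^2$, whereas $\bc^2\setminus C\simeq S^1$. So the homotopy statement needs exactly $n_{i,j}-1$ relators per block (cf.\ (R1) in the remark after the theorem), not the full range $t_{i,j}\le k<t_{i,j}+n_{i,j}$ read literally. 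The last relator of each block must be cancelled by a geometric $3$-cell.

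\textbf{Where the $3$-cells come from.}
\begin{enumerate}
\item By Proposition-Definition~\ref{propdef:ed}, for a Milnor bidisk $B=\mathbb{D}_\delta\times\mathbb{D}_\varepsilon$ the inclusion $\partial B\setminus C\subset B\setminus C$ is a homotopy equivalence.
\item $\partial B\setminus C$ is the mapping torus $T_\beta$ of the $m$-punctured fibre $F$, union the solid torus $\mathbb{D}_\delta\times\partial\mathbb{D}_\varepsilon$.
\item Realize the monodromy by a homeomorphism equal to the identity near $\partial\mathbb{D}_\varepsilon$. Retract $F$ onto a wedge with free basis $\mu_1,\dots,\mu_{m-1},\partial$, where $\partial=\mu_1\cdots\mu_m$ is the outer circle.
\item Then $T_\beta$ has one relator cell for each $\mu_k$, $k<m$, plus the torus cell $[\gamma,\partial]$.
\item The solid torus adds a disk along $\gamma$ and one $3$-cell of which the torus cell is a free face.
\item Collapse that pair and contract the disk. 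What remains, relative to $F$, is exactly $F$ with $m-1$ relator cells.
\end{enumerate}

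\textbf{Several singular points on one vertical line.} Do the same with $\mathbb{D}_i\times G$, where $G$ is the part of the fibre outside the Milnor disks; the monodromy is the identity on $G$. The $3$-cells $\mathbb{D}_i\times e$, for $e$ an edge of a spine of $G$, cancel the mapping-torus cells of the transversal strands, of the circles $\partial\mathbb{D}_{\varepsilon_j}(y_j)$ and of the connecting arcs. The disks $\mathbb{D}_i\times\{v\}$ are contracted as before.

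\textbf{Assembly.} This is just the gluing lemma for homotopy equivalences relative to the common fibre over $x_0$, which is a cofibration. No asphericity is needed, and $\bc^2\setminus C$ is usually not aspherical anyway.
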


\begin{remark}
Let us recall the interpretation of the relations in \eqref{eq:puiseux}. Let us express them first in terms of 
the positive braids $\beta_{i, j}$ (i.e. before the action of $\alpha_i$).
\begin{enumerate}[label=\rm(R\arabic{enumi})]
\item $\beta_{i,j}=\sigma_1^k$. The only relation is $\mu_1=\mu_2$ (for $k=1$), $[\mu_1,\mu_2]=1$ (for $k=2$), the braid
relation $\mu_1\cdot\mu_2\cdot\mu_1=\mu_2\cdot\mu_1\cdot\mu_2$, \dots
\item $\beta_{i, j}=\sigma_{m-1}\cdot\ldots\cdot\sigma_1$. The relations are $\mu_1=\dots=\mu_m$.
\item $\beta_{i, j}=(\sigma_{m-1}\cdot\ldots\cdot\sigma_1)^m$. The relations are the commutations of $\mu_1\cdot\ldots\cdot\mu_m$
with $\mu_1,\dots,\mu_{m-1}$.
\end{enumerate}
The action of $\alpha_i^{-1}$ implies that these relations may apply to some other geometric basis of meridians instead of $\mu_1,\dots,\mu_n$.
\end{remark}

\begin{remark}\label{rem:non_gen1}
In the next section we are going to impose some genericity conditions which are enough for most results. The reason to state them in more generality
comes from the fact that these non-generic braid monodromy factorizations
can be computed usually in an easier way, sometimes it may make the difference between effective computation and failure. 
Even more, if the failure of $f(x,y)$ to be monic in~$y$ is because $f(x,y)=f_1(x,y)\cdot g(x)$, where $f_1$ is monic in~$y$, then, with the braid monodromy of $f_1$, the fundamental group of $\bc^2\setminus\{f(x,y)=0\}$ can be computed,
using the ideas in the proof of Theorem~\ref{thm:zvk2}.
\end{remark}

 \section{Topology of plane projective curves}\label{sec:proj}

Zariski's original goal was to obtain properties of the topology of projective plane curves but in \S\ref{sec:global} only the affine case was considered. In that section the coordinate system and the equation of the affine curve was fixed. Let $\bar{C}\subset\bp^2$ be a projective plane
curve of degree~$n$. Let $L\subset\bp^2$ be a line transversal to $\bar{C}$, i.e., $\#(L\cap \bar{C})=n$; note that there is a dense open Zariski set 
in the space of lines satisfying this property. Choose a point $P\in L$ in a generic way. We ask for the following genericity properties:
\begin{enumerate}[label=\rm(G\arabic{enumi})]
\item $P\notin\bar{C}$.
\item Let $M\neq L$ be a line through $P$ which is tangent to $\bar{C}$ at a point $Q$, then this point is unique and $(M\cdot\bar{C})_Q=2$, i.e., it is not a flex point, and 
$M\cap\sing\bar{C}=\emptyset$.
\item Let $M\neq L$ be a line through $P$  such that $M\cap\sing\bar{C}\neq\emptyset$; then,  this set contains only a point $Q$, and $(M\cdot\bar{C})_Q=m_Q$, 
where $m_Q$ is the multiplicity of $\bar{C}$ at $Q$.
\end{enumerate}

It is a classical result that in the incidence variety $\{(L,P)\in\check{\bp}^2\times\bp^2\mid P\in L\}$, the space of 
pairs satisfying the above properties is a connected dense open Zariski subset. We need to eliminate the lines joining
two singular points, the tangent lines to flexes, the bitangent lines (or worse), the lines non-transversal to the singular points, and
the tangent lines through the singular points. This is a finite number of lines, take $P$ in the complement of $\bar{C}$ and these lines. Now, take $L$
through $P$ and transversal to $\bar{C}$. 

The following result is well known though I do not have a concrete reference (maybe~\cite{Randell1989} for line arrangements). Let $\bp_{(n)}$ be the projective space (of dimension~$\frac{n(n+3)}{2}$)
of the projective plane curves of degree~$n$.
Let $\Sigma\subset\bp_{(n)}$ be the space of curves with the same \emph{combinatorics}. For an irreducible curve, its combinatorics is the degree and the 
topological types of the singular points. For a reducible curve, the combinatorics is determined by the combinatorics of the irreducible components
and the topological types and intersection numbers at the intersection of components.
These spaces $\Sigma$ are quasi-projective varieties.

\begin{prop}\label{prop:isotopy}
Let $\bar{C}_0,\bar{C}_1$ be two curves with the same combinatorics in the same connected component of $\Sigma$.

Let $(L_0,P_0), (L_1,P_1)$ with the above genericity properties for $\bar{C}_0,\bar{C}_1$ respectively. Then, there is a \emph{continuous path} $(\bar{C}_t,L_t,P_t)$, $t\in[0,1]$
such that
\begin{enumerate}[label=\rm(\alph{enumi})]
\item $\bar{C}_t\in\Sigma$;
\item $(L_t,P_t)\in\check{\bp}^2\times\bp^2$ satisfying the genericity properties for $\bar{C}_t$,
\item and there exists an isotopy $H:\bp^2\times[0,1]\to\bp^2$, $h(Q,t):=h_t(Q)$, such that 
\begin{enumerate}[label=\rm(\roman{enumii})]
\item $h_0$ is the identity, 
\item $h_t(\bar{C}_0)=\bar{C}_t$, 
\item $h_t(L_0)=L_t$, and 
\item $h_t(P_0)=P_t$.
\end{enumerate}
\end{enumerate}

\end{prop}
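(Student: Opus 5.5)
The plan is to work in the incidence space
\[
\mathcal{I}:=\{(\bar{C},L,P)\in\Sigma\times\check{\bp}^2\times\bp^2\mid P\in L,\ (L,P)\text{ generic for }\bar{C}\}
\]
and prove two things. First, the fibers of the natural map $\mathcal{I}\to\Sigma$ over a connected component $\Sigma_0$ are connected, so that $\mathcal{I}$ restricted to $\Sigma_0$ is connected. Second, along any path in this part of $\mathcal{I}$ there is an ambient isotopy of $\bp^2$ realizing the family. Combining them gives the statement.

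For connectedness, note that for fixed $\bar{C}$ the genericity conditions (G1)--(G3) and transversality of $L$ remove a proper Zariski closed subset from the irreducible incidence variety $\{(L,P)\mid P\in L\}$, as explained before the statement. Its complement is a connected dense Zariski open set, since complex codimension is at least one. The bad locus varies algebraically with $\bar{C}$, so $\mathcal{I}$ is a Zariski open subset of $\Sigma\times\{(L,P)\mid P\in L\}$, and it has nonempty connected fibers over $\Sigma$. Since $\Sigma_0$ is connected and the projection $\mathcal{I}\to\Sigma$ is open with connected fibers, $\mathcal{I}|_{\Sigma_0}$ is connected. If one wants more, one may restrict to a connected component of $\Sigma_0$, or use that removing real codimension two does not disconnect. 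Being quasi-projective, $\mathcal{I}|_{\Sigma_0}$ is then path-connected, and one can pick a continuous (even piecewise real-analytic) path $(\bar{C}_t,L_t,P_t)$ joining $(\bar{C}_0,L_0,P_0)$ to $(\bar{C}_1,L_1,P_1)$. This gives (a) and (b).

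For (c), the key point is that along the path the configuration $\bar{C}_t\cup L_t\cup\{P_t\}$, together with the pencil of lines through $P_t$, is \emph{equisingular}. The combinatorics is constant, meaning the same topological types of singular points, the same intersection numbers between branches, $L_t$ stays transversal, and the number of special lines of the pencil (tangent lines and lines through singular points) is constant by (G2)--(G3). First move $P_t$ and $L_t$ back to fixed positions with a path in $PGL_3(\bc)$, which is an isotopy of $\bp^2$; so we may assume $L_t=L$ and $P_t=P$. The family $\{\bar{C}_t\}$ then has constant combinatorics, so its singular points move continuously, and by the equisingularity of the germs (constant Milnor number and the same topological type) one gets local isotopies near each singular point. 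These come from the local structure in \S\ref{sec:local}, since the Puiseux characteristic and coincidence exponents are constant. Away from the singular points $\bar{C}_t$ is a smooth family, and the Thom--Mather isotopy lemma, or Ehresmann's theorem applied to the pair $(\bp^2\times[0,1],\bigcup_t\bar{C}_t\times\{t\})$ with the Whitney stratification given by the singular point curves, smooth part and complement, yields a stratified trivialization. We keep $L$ and $P$ as additional strata, which is possible because $L$ is transversal and $P\notin\bar{C}_t$. Integrating a controlled vector field gives $h_t$ with $h_0=\mathrm{id}$ and the required properties.

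The main obstacle is this last step: showing that constant combinatorics implies Whitney regularity of the total family along the singular strata, so that Thom's lemma applies. The plan is to invoke the Zariski/Lê--Ramanujam--Teissier result that $\mu$-constant (equivalently equisingular) families of plane curve germs are Whitney equisingular. This needs the topological types of the individual branches and their mutual intersection numbers, which is exactly what the combinatorics records. The semilocal gluing of \S\ref{sec:semilocal} handles several singular points in a vertical line, although by (G3) there is only one.
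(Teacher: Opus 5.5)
The paper gives no proof of this proposition. It is quoted as well known, with a pointer to Randell for line arrangements. The only supporting argument is the paragraph before it, which shows that for a \emph{fixed} curve the admissible pairs $(L,P)$ form a connected dense Zariski-open subset of the incidence variety.

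Your proposal supplies a proof along the standard lines, and it is correct in outline. You extend the fixed-curve connectivity over a component $\Sigma_0$ by an open surjection with connected fibres, which gives (a) and (b). You get (c) from a lift to $PGL_3(\bc)$ followed by Thom's first isotopy lemma. The Whitney regularity along the singular-point sections comes from constancy of the topological type (Zariski, Teissier). In the arrangement case the paper alludes to, all singular points are ordinary multiple points and an isotopy can be written down by hand. Your route handles arbitrary combinatorics, at the cost of heavy equisingularity theory.

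Three steps should be made explicit.

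\emph{Openness of $\mathcal{I}$.} Openness in $\Sigma\times\{(L,P)\mid P\in L\}$ does not follow from ``the bad locus varies algebraically''. You need that genericity survives small deformations \emph{inside} $\Sigma$, and this uses that $\mu_Q$ and $m_Q$ are constant along $\Sigma$.
\begin{itemize}
\item With $L$ transversal, $\disc_y f$ has degree exactly $n(n-1)$.
\item For a generic projection, the vertical line through a singular point $Q$ contributes exactly $\mu_Q+m_Q-1$.
\item For a nearby triple, the line through $P'$ and the nearby singular point $Q'$ contributes at least $\mu_{Q'}+(M'\cdot\bar C')_{Q'}-1\geq\mu_Q+m_Q-1$.
\end{itemize}
By continuity of the discriminant roots, equality is forced. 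This gives (G2)--(G3) for the nearby triple; simple discriminant roots stay simple and correspond to ordinary tangencies.

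\emph{Continuity of the singular points.} This uses that their number is fixed in $\Sigma$, together with upper semicontinuity of $\mu$. The same argument gives the non-splitting you use implicitly.

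\emph{Complex-analytic input.} Whitney equisingularity of $\mu$-constant families is a statement about complex-analytic families. So take the path semialgebraic, hence piecewise real-analytic, and choose the lift $g_t$ to $PGL_3(\bc)$ analytic on each piece. Then complexify each piece. By the identity principle the complexified family lies in the Zariski closure of $\Sigma$, hence in $\Sigma$ near the real segment, since $\Sigma$ is locally closed. The real family inherits Whitney regularity because every stratum submerses onto the parameter disk.
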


Given the curve $\bar{C}$ consider a system of coordinates $[X:Y:Z]$ in $\bp^2$ such that $L=\{Z=0\}$ and $P=[0:1:0]$. 
The affine chart $\bp^2\setminus L$ is identified with $\bc^2$ via the map $(x, y)\mapsto[x:y:1]$. Let $F(X,Y,Z)=0$ be an equation
of $\bar{C}$, where $F$ is a homogeneous polynomial of degree~$d$. Since $P\notin\bar{C}$, the coefficient of $Y^n$ does not vanish
and we may assume it equals~$1$. Let $f(x,y):=F(x,y,1)$, which is a monic polynomial in~$y$ (as demanded in \S\ref{sec:global}). It is
the equation of $C:=\bar{C}\cap\bc^2$.

\begin{dfn}
A \emph{braid monodromy factorization} of $\bar{C}$ is a braid monodromy factorization for~$f$ with the above choices.
\end{dfn}

\begin{prop}
Let $\boldsymbol{\tau}:=(\tau_1,\dots,\tau_r)\in(\mathbb{B}_n)^r$ be a braid monodromy factorization of~$\bar{C}$.

\begin{enumerate}[label=\rm(\arabic{enumi})]
\item\label{r1} $r=\deg\check{\bar{C}}+\#\sing(\bar{C})$ where $\check{\bar{C}}$ is the dual curve of $\bar{C}$.
\item\label{r2} The embedded topology of $(\bc^2,C)$ does not depend on the
particular choice of $L,P$ and the coordinate system.
\item\label{r3} $\mathbf{e}(\boldsymbol{\tau})=\Delta_n^2$ is the \emph{full-twist} or positive generator of the center of $\mathbb{B}_n$.
\item The set of braid monodromy factorizations of $\bar{C}$ is the orbit of $\boldsymbol{\tau}$ by the Hurwitz action.
\item The $\mathbb{B}_n$-conjugacy class of $G(\boldsymbol{\tau})$ is an invariant of $\bar{C}$.
\end{enumerate}
\end{prop}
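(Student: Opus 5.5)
I will treat the five items separately, using the genericity conditions (G1)--(G3) and the fibration picture from the proof of Theorem~\ref{thm:zvk2}.

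For \ref{r1}, I identify $\Delta_f$ with the set of lines $M\neq L$ through $P$ that are not transversal to $\bar{C}$. By (G2) and (G3) these lines are of two disjoint kinds: those through exactly one singular point, and simple tangents at smooth non-flex points. Lines through $P$ form a pencil, that is, a line in $\check{\bp}^2$. The number of tangent lines of $\bar{C}$ through a generic $P$ equals the number of intersection points of this line in $\check{\bp}^2$ with $\check{\bar{C}}$. Genericity of $P$ makes these intersections transversal and keeps them away from the duals of the singular points and of the lines of the excluded finite set. Hence there are $\deg\check{\bar{C}}$ simple tangents, and by (G3) there are $\#\sing(\bar{C})$ lines through singular points, all distinct. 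This gives $r=\deg\check{\bar{C}}+\#\sing(\bar{C})$.

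For \ref{r2}, I apply Proposition~\ref{prop:isotopy} with $\bar{C}_0=\bar{C}_1=\bar{C}$ and the constant path. The incidence set of admissible pairs $(L,P)$ is connected, so any two admissible choices are joined by a path, and the isotopy $h_t$ carries $(\bp^2,\bar{C}\cup L)$ to itself with $L_0$ sent to $L_1$. This yields a homeomorphism of pairs $(\bp^2\setminus L_0,C_0)\cong(\bp^2\setminus L_1,C_1)$. Changing coordinates by a projective transformation fixing $L$ and $P$ is isotopic to the identity because $PGL$ is connected.

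For \ref{r3}, I choose a geometric basis $\gamma_1,\dots,\gamma_r$ of $\pi_1(\bc\setminus\Delta_f;x_0)$ with $x_0$ on the boundary of a large disk. The product $\gamma_1\cdot\ldots\cdot\gamma_r$ is then the loop $\abs{x}=R$, so $\mathbf{e}(\boldsymbol{\tau})$ is the braid traced by the roots of $f(Re^{2\pi i s},y)$. Since $L$ is transversal, $F(X,Y,0)$ has $n$ distinct roots. For $\abs{x}$ large, writing $y=xw$ gives $f(x,xw)=x^n(F(1,w,0)+O(1/x))$, so the roots are $y\approx x\,w_k$ with $w_k$ distinct. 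As $x$ goes once around, the $n$ roots rotate rigidly by one full turn, which is the full twist $\Delta_n^2$; a straight-line homotopy removes the lower-order terms. The main obstacle is making this scaling argument precise, meaning uniform control of the $O(1/x)$ term so that the roots stay separated, together with checking that the orientation convention gives $\Delta_n^2$ and not its inverse.

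For items (4) and (5), braid monodromy factorizations are only defined up to the choice of pseudogeometric basis and of the identification of $\pi_1(T_n;\tilde f(x_0))$ with $\mathbb{B}_n$. The lemma preceding Theorem~\ref{thm:zvk2} then gives that the factorizations form a single orbit under the Hurwitz action of $\mathbb{B}_n\times\mathbb{B}_r$. The conjugacy class of $G(\boldsymbol{\tau})$ is invariant under this orbit by the lemma on $(G,\mathbf{e})$. It is independent of the choices of $(L,P)$ because Proposition~\ref{prop:isotopy} deforms one admissible configuration to another. Along this deformation $\Delta_f$ moves continuously without collisions, so the braid monodromy is transported to an equivalent one.
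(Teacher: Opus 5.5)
Your proposal is correct and follows the same route as the paper, which attributes (1) to the genericity conditions and the definition of the dual curve, (2) to Proposition~\ref{prop:isotopy}, (3) to the transversality of $L$ at infinity, and (4)--(5) to the Hurwitz-orbit lemmas; you are filling in the details of that outline. Two small clean-ups. In (3), an arbitrary pseudogeometric basis only gives a product that is conjugate to your big-circle loop, which is harmless because $\Delta_n^2$ is central. In (2), Proposition~\ref{prop:isotopy} does not let you prescribe a constant path of curves, but you only need the end homeomorphism $h_1$, so this does not matter.
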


\begin{proof}
\ref{r1} is a direct consequence of the genericity properties and the definition of the dual curve.
\ref{r2} follows from Proposition~\ref{prop:isotopy}. \ref{r3} is a consequence of the genericity of $L$ and the other results
are straightforward.
\end{proof}

We can also define Puiseux factorizations in the same way, but now it becomes easier, since in the notation before 
Definition~\ref{def:puiseux1}, $s_{i}$ is always one; we simplify the notations, where $n_i$ is the number of strands
involved in the central positive braid, and $t_i$ is the index of the first strand. Let us rephrase the definitions and results.

\begin{dfn}
A \emph{Puiseux factorization} of $\bar{C}$ is a tuple $\{(\alpha_i,\beta_i)\}_{i=1}^r$ such that the tuple
$\{(\alpha_i\cdot\beta_i\cdot\alpha_i^{-1})\}_{i=1}^r$ is a braid monodromy factorization of $\bar{C}$ 
and the braids $\beta_i$ are positive.
\end{dfn}

The projective version is obtained using again \cite{Fujita1982} and killing the meridian of $L$
which turns out to be $(\mu_1\cdot\ldots\cdot\mu_n)^{-1}$.

\begin{thm}[Projective versions of Zariski-van Kampen]
Let $\bar{C}$ be a projective curve.
\begin{enumerate}[label=\rm(ZvK\arabic{enumi})]
\item Let $\boldsymbol{\tau}:=(\tau_1,\dots,\tau_r)\in(\mathbb{B}_n)^r$ be a braid monodromy factorization
of $\bar{C}$. Then,
\begin{enumerate}[label=\rm(\alph{enumii})]
\item $\pi_1(\bc^2\setminus C)=
\left\langle
\mu_1,\dots,\mu_n
\middle|
\mu_i^{\tau_j}\cdot\mu_i^{-1},\ 1\leq j\leq r,\ 1\leq i<n
\right\rangle$, or equivalently, 
the quotient of $\mathbb{F}_n$ by the normal subgroup generated by 
$\mu^\tau\cdot\mu^{-1}$, for $\mu\in\mathbb{F}_n$ and $\tau\in G(\boldsymbol{\tau})$.

\item $\mu_1\cdot\ldots\cdot\mu_n$ is a central element of $\pi_1(\bc^2\setminus C)$.

\item Actually, we can take the relations only for $1\leq j<r$ and add the centrality
of $\mu_1\cdot\ldots\cdot\mu_n$ as $n-1$ relations.

\item $\pi_1(\bp^2\setminus\bar{C})$ is the quotient of 
$\pi_1(\bc^2\setminus C)$ by the subgroup
 generated by $\mu_1\cdot\ldots\cdot\mu_n$.
\end{enumerate}
\item Let $\{(\alpha_i,\beta_i)\}_{i=1}^r$ be a Puiseux factorization of $\bar{C}$.
Then,
\begin{enumerate}[label=\rm(\alph{enumii})]
\item $\pi_1(\bc^2\setminus C)=
\left\langle
\mu_1,\dots,\mu_n
\middle|
(\mu_i^{\beta_j}\cdot\mu_i^{-1})^{\alpha_i},\ 1\leq j\leq r,\ 1\leq i<n
\right\rangle$.

\item The $CW$-complex of this presentation has the homotopy type
of $\bc^2\setminus C$.

\item $\pi_1(\bp^2\setminus\bar{C})=
\left\langle
\mu_1,\dots,\mu_n
\middle|
\underset{1\leq j< r}{\overset{1\leq i<n}{(\mu_i^{\ \beta_j}\cdot\mu_i^{-1})^{\alpha_i}\ }},\
\mu_1\cdot\ldots\cdot\mu_n
\right\rangle$.
\end{enumerate}

\end{enumerate}

\end{thm}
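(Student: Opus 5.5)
The plan is to derive each item from the affine results of \S\ref{sec:global} together with the two projective features of the setup: the product identity $\mathbf{e}(\boldsymbol{\tau})=\Delta_n^2$ and the genericity of $L$.

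For (ZvK1)(a), Theorem~\ref{thm:zvk2} applied to $f$ gives the stated presentation directly. The equivalent description follows from two observations. First, the set of $\tau$ with $\mu^\tau\mu^{-1}$ in the normal closure $N$ of the relations is a subgroup. Second, if $\mu^{\tau}\equiv\mu$ for all $\mu$, then $\mu^{\tau^{-1}}\equiv\mu$ as well, since $\Phi_\tau$ is an automorphism of $\mathbb{F}_n$ that descends to the quotient. Hence imposing the relations for the generators $\tau_j$ is the same as imposing them for all of $G(\boldsymbol{\tau})$.

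For (b), apply this to $\Delta_n^2=\tau_1\cdots\tau_r\in G(\boldsymbol{\tau})$. A direct computation with the formulas for $\mu_i^{\sigma_j}$ shows that $\Delta_n^2$ acts on $\mathbb{F}_n$ as conjugation by $\mu_1\cdots\mu_n$. It is enough to check this for $\Delta_n^2=(\sigma_1\cdots\sigma_{n-1})^n$: the element $\sigma_1\cdots\sigma_{n-1}$ acts as a cyclic shift twisted by conjugation, and its $n$-th power is an inner automorphism. The relations $\mu^{\Delta_n^2}=\mu$ then say that $\mu_1\cdots\mu_n$ is central.

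For (c), write $\tau_r=(\tau_1\cdots\tau_{r-1})^{-1}\Delta_n^2$. Modulo the relations for $\tau_1,\dots,\tau_{r-1}$ and centrality, the action of $\tau_r$ is trivial, again by the subgroup argument. Centrality only needs to be imposed against $\mu_1,\dots,\mu_{n-1}$, because commutation with $\mu_n$ then follows.

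For (d), compactify and use \cite{Fujita1982}. Since $L$ is transversal to $\bar C$, $\pi_1(\bp^2\setminus\bar C)$ is the quotient of $\pi_1(\bc^2\setminus C)$ by the normal closure of a meridian of $L$. Taking $y_0$ large, the loop in the fiber $x=x_0$ running around a large circle is a meridian of $L$, since in that fiber $L$ corresponds to $y=\infty$. This loop represents $(\mu_1\cdots\mu_n)^{-1}$, which is central, so its normal closure is the subgroup it generates.

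For (ZvK2), (a) is Theorem~\ref{thm:zvk3} with $s_i=1$, and (b) is Theorem~\ref{thm:lib}. For (c), combine the Puiseux relations for $j<r$ with the relation $\mu_1\cdots\mu_n=1$. The centrality relations from (ZvK1)(c) become trivial once $\mu_1\cdots\mu_n=1$, so the dropped $r$-th relations are recovered as in (c).

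The main obstacle is the identification of the meridian of $L$ with $(\mu_1\cdots\mu_n)^{-1}$, including its orientation and the need for $\abs{x_0},\abs{y_0}\gg1$. Closely tied to this is verifying the action of $\Delta_n^2$ as conjugation, with the correct side convention for the anti-homomorphism.
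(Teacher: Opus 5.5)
Your proposal is correct and follows the paper's own argument, with more detail than the paper gives. The affine Theorems~\ref{thm:zvk2}, \ref{thm:zvk3} and~\ref{thm:lib}, together with $\mathbf{e}(\boldsymbol{\tau})=\Delta_n^2$ acting as conjugation by $\mu_1\cdots\mu_n$, give the affine statements, and \cite{Fujita1982} plus the identification of the meridian of $L$ with $(\mu_1\cdots\mu_n)^{-1}$ gives the projective ones.

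Two small points. In (ZvK1)(a), the subgroup argument first needs $\mu_n^{\tau_j}\mu_n^{-1}\in N$, which follows from property~\ref{a2}. In (ZvK2)(b), Theorem~\ref{thm:lib} must be applied to the economical presentation, keeping only the non-redundant relators of the strands actually involved. Every superfluous relator adds a wedge summand $S^2$, so the presentation of (a) read literally, with all $1\leq i<n$, has in general too many $2$-cells.
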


\begin{cor}
The groups $\pi_1(\bp^2\setminus\bar{C})$
and $\pi_1(\bc^2\setminus C)$ are completely determined by (the conjugacy class of) $G(\boldsymbol{\tau})$,
where $\boldsymbol{\tau}$ is any braid monodromy factorization of~$\bar{C}$.
\end{cor}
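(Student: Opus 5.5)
The corollary follows from the projective Zariski--van Kampen theorem, part (ZvK1)(a), together with the description of the orbit of braid monodromy factorizations. By (ZvK1)(a), $\pi_1(\bc^2\setminus C)$ is the quotient of $\mathbb{F}_n$ by the normal subgroup $N(G)$ generated by $\{\mu^\tau\cdot\mu^{-1}\mid \mu\in\mathbb{F}_n,\ \tau\in G(\boldsymbol{\tau})\}$. The first step is to check that this subgroup really depends only on the group $G=G(\boldsymbol{\tau})$ and not on the chosen generators $\tau_1,\dots,\tau_r$. If $\mu^{\tau}\equiv\mu$ and $\mu^{\tau'}\equiv\mu$ modulo $N$ for all $\mu$, then
\[
\mu^{\tau\tau'}=(\mu^\tau)^{\tau'}\equiv\mu^{\tau}\equiv\mu .
\]
Here we use that $\phi_{\tau'}$ sends $N$ into itself, which holds because $N$ is generated by elements of the form $\nu^{\tau_j}\nu^{-1}$ and the action is by automorphisms. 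Inverses are handled the same way. Hence the normal subgroup generated by $\{\mu_i^{\tau_j}\mu_i^{-1}\}$ is the same as the one generated by $\mu^\tau\mu^{-1}$ for all $\tau\in G$.

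The second step is the projective group. By (ZvK1)(d), $\pi_1(\bp^2\setminus\bar C)$ is obtained by further killing $\mu_1\cdot\ldots\cdot\mu_n$. This element is fixed by every braid, by property~\ref{a2}, so it is intrinsic to the free group with its geometric basis and does not depend on $\boldsymbol{\tau}$. Thus both groups are functions of $G$ alone, once the geometric basis $(\mu_1,\dots,\mu_n)$ of $\mathbb{F}_n$ is fixed.

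The third step, which I expect to be the only real point, is independence of the conjugacy class. Replacing $G$ by $G^\sigma=\sigma^{-1}G\sigma$ for $\sigma\in\mathbb{B}_n$, we use the automorphism $\phi_\sigma$ of $\mathbb{F}_n$. One checks that
\[
\phi_\sigma\bigl(\mu^\tau\mu^{-1}\bigr)=\nu^{\sigma^{-1}\tau\sigma}\,\nu^{-1}, \qquad \nu=\phi_\sigma(\mu),
\]
up to the ordering convention of the anti-homomorphism, which may replace $\sigma^{-1}\tau\sigma$ by $\sigma\tau\sigma^{-1}$. In either case $\phi_\sigma(N(G))=N(G^\sigma)$, since $\phi_\sigma$ is bijective on $\mathbb{F}_n$. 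So $\phi_\sigma$ induces an isomorphism $\mathbb{F}_n/N(G)\cong\mathbb{F}_n/N(G^\sigma)$. By~\ref{a2} it fixes $\mu_1\cdot\ldots\cdot\mu_n$, so it also descends to the projective quotients.

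Finally, since the braid monodromy factorizations of $\bar C$ form one Hurwitz orbit, and by the earlier lemma the $\mathbb{B}_n$-conjugacy class of $G(\boldsymbol{\tau})$ is constant along that orbit, we conclude that any factorization gives the same conjugacy class. That class determines both fundamental groups up to isomorphism. The only care needed is with the left/right conventions of the action~\eqref{eq:action}; the argument itself is formal.
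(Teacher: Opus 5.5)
Your proposal is correct and follows the paper's route. The paper gives no separate proof: it reads the corollary off from (ZvK1)(a), which already presents $\pi_1(\bc^2\setminus C)$ as $\mathbb{F}_n$ modulo the normal closure of $\{\mu^\tau\mu^{-1}\mid \mu\in\mathbb{F}_n,\ \tau\in G(\boldsymbol{\tau})\}$, and from (ZvK1)(d), with invariance under conjugation and along the Hurwitz orbit as in the earlier lemma; your explicit checks, including $\phi_\sigma(N(G))=N(\sigma^{-1}G\sigma)$ and that $\phi_\sigma$ fixes $\mu_1\cdot\ldots\cdot\mu_n$ by (A2), are sound and simply make that implicit argument explicit.
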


This corollary justifies the following definition.

\begin{dfn}
Let $\boldsymbol{\tau}\in(\mathbb{B}_n)^r$. The \emph{affine group} of $\boldsymbol{\tau}$ is
\[
G^\aff(\boldsymbol{\tau}):=\left\langle
\mu\in\mathbb{F}_n
\middle|\mu^\tau\cdot\mu^{-1},\ \tau\in G(\boldsymbol{\tau})\right\rangle.
\]
The \emph{projective group} of $\boldsymbol{\tau}$ is
\[
G^\proy(\boldsymbol{\tau}):=\frac{G^\aff(\boldsymbol{\tau})}{\ns{\mu_1\cdot\ldots\cdot\mu_n}}.
\]
\end{dfn}

It is useful to make explicit the following straightforward result.

\begin{prop}\label{prop:grupos}
Let $\boldsymbol{\tau}_1, \boldsymbol{\tau}_2\in(\mathbb{B}_n)^r$. If
$\boldsymbol{\tau}_1$ and $\boldsymbol{\tau}_2$ are Hurwitz equivalent,
$G^\aff(\boldsymbol{\tau}_1)\cong G^\aff(\boldsymbol{\tau}_2)$
and $G^\proy(\boldsymbol{\tau}_1)\cong G^\proy(\boldsymbol{\tau}_2)$.
\end{prop}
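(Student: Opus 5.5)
Two tuples are Hurwitz equivalent when they lie in the same orbit of the action of $\mathbb{B}_n\times\mathbb{B}_r$ on $(\mathbb{B}_n)^r$. Since $G^\aff$ and $G^\proy$ depend only on the monodromy group, it is enough to check the statement for a single generator of each factor; the general case then follows by induction on word length. The idea is to track how $G(\boldsymbol{\tau})$ changes under each move, and show that the defining normal subgroups either coincide or are carried onto each other by an automorphism of $\mathbb{F}_n$ that fixes $\mu_1\cdot\ldots\cdot\mu_n$.

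\emph{Moves by $\eta_i\in\mathbb{B}_r$.} The new tuple replaces $(\tau_i,\tau_{i+1})$ with $(\tau_i\tau_{i+1}\tau_i^{-1},\tau_i)$. The subgroup generated by $\tau_i\tau_{i+1}\tau_i^{-1}$ and $\tau_i$ equals the one generated by $\tau_i$ and $\tau_{i+1}$, and the remaining entries are unchanged. Hence $G(\boldsymbol{\tau}^{\eta_i})=G(\boldsymbol{\tau})$, and the same holds for $\eta_i^{-1}$. So both $G^\aff$ and $G^\proy$ are literally the same quotient of $\mathbb{F}_n$.

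\emph{Conjugation by $\tau\in\mathbb{B}_n$.} Here $G(\boldsymbol{\tau}^\tau)=\tau^{-1}G(\boldsymbol{\tau})\tau$, with the conjugation convention matching the action~\eqref{eq:action}. Let $\phi:=\phi_\tau$ be the automorphism of $\mathbb{F}_n$ given by $\mu\mapsto\mu^\tau$; since \eqref{eq:action} is an (anti-)action, $\phi$ intertwines the actions of $g$ and of $\tau^{-1}g\tau$. Put $N:=\ns{\mu^g\mu^{-1}\mid \mu\in\mathbb{F}_n,\ g\in G(\boldsymbol{\tau})}$ and let $N'$ be the analogous normal subgroup for $G(\boldsymbol{\tau}^\tau)$. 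Then
\[
\phi\bigl(\mu^g\mu^{-1}\bigr)=\bigl(\phi(\mu)\bigr)^{\tau^{-1}g\tau}\,\phi(\mu)^{-1},
\]
possibly with $\tau$ and $\tau^{-1}$ swapped depending on the convention. Because $\phi$ is surjective, this gives $\phi(N)=N'$, so $\phi$ induces $G^\aff(\boldsymbol{\tau})\cong G^\aff(\boldsymbol{\tau}^\tau)$.

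For the projective group, property~\ref{a2} gives $\phi(\mu_1\cdots\mu_n)=\mu_1\cdots\mu_n$. Therefore $\phi$ maps $N\cdot\ns{\mu_1\cdots\mu_n}$ onto $N'\cdot\ns{\mu_1\cdots\mu_n}$, which yields the projective isomorphism.

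The only delicate point is getting the conjugation convention right: one has to check that the right action $\mu\mapsto\mu^\tau$ turns $g$ into $\tau^{-1}g\tau$ and not $\tau g\tau^{-1}$. Either choice gives a conjugate subgroup, so the argument goes through with the appropriate sign of the exponent of $\tau$.
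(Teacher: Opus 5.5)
Your proof is correct and is essentially the argument the paper has in mind when it calls this result straightforward (it gives no written proof). The $\mathbb{B}_r$-moves leave $G(\boldsymbol{\tau})$ unchanged, global conjugation by $\tau\in\mathbb{B}_n$ conjugates it, and the automorphism $\phi_\tau$ then carries the defining normal subgroups onto each other while fixing $\mu_1\cdot\ldots\cdot\mu_n$ by property~\ref{a2}; your check that $\phi_\tau(\mu^g\mu^{-1})=\phi_\tau(\mu)^{\tau^{-1}g\tau}\,\phi_\tau(\mu)^{-1}$ for the right action is exactly the detail the paper leaves implicit.
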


There is a close relationship between $\pi_1(\bc^2\setminus C)$ and $\pi_1(\bp^2\setminus\bar{C})$. Actually, they fit in 
this diagram:
\begin{equation}
\label{eq:pb}
\begin{tikzcd}
\mu_i\dar[mapsto]&[-30pt]\pi_1(\bc^2\setminus C)\rar["i_*"]\dar["\varepsilon"]&\pi_1(\bp^2\setminus\bar{C})\dar["\varepsilon_n"]&[-45pt]\mu_i\dar[mapsto]\\
1&\mathbb{Z}\rar["\pi"]&\mathbb{Z}/n&[-30pt]1\bmod{n}
\end{tikzcd}
\end{equation}

\begin{prop}
The diagram \eqref{eq:pb} is a pull-back diagram and $\ker i_*$ is central. If $\bar{C}$ is irreducible both
$\varepsilon$ and $\varepsilon_n$ are the abelianization maps and then $\pi_1(\bp^2\setminus\bar{C})$
determines $\pi_1(\bc^2\setminus C)$.
\end{prop}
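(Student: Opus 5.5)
The approach is to work directly with the presentations from the projective Zariski--van Kampen theorem and then translate the pull-back condition into group theory.

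\emph{Commutativity and centrality.} By (ZvK1)(d), $i_*$ is surjective with kernel normally generated by $\mu_1\cdot\ldots\cdot\mu_n$. By (ZvK1)(b) this element is central, so $\ker i_*=\langle\mu_1\cdot\ldots\cdot\mu_n\rangle$ is a central cyclic subgroup. Commutativity of \eqref{eq:pb} needs $\varepsilon$ and $\varepsilon_n$ to be well defined. The relations $\mu_i^{\tau}\cdot\mu_i^{-1}$ lie in the kernel of $\mathbb{F}_n\to\mathbb{Z}$, $\mu_i\mapsto 1$, because by \ref{a1} the element $\mu_i^\tau$ is a conjugate of some $\mu_k$ and so has exponent sum~$1$. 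Hence $\varepsilon$ exists. Since $\varepsilon(\mu_1\cdot\ldots\cdot\mu_n)=n$, it descends to $\varepsilon_n$, and $\pi\circ\varepsilon=\varepsilon_n\circ i_*$ holds on generators.

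\emph{Pull-back.} I would show that the induced map $\Phi=(i_*,\varepsilon)\colon\pi_1(\bc^2\setminus C)\to P:=\{(g,m)\in\pi_1(\bp^2\setminus\bar C)\times\mathbb{Z}\mid \varepsilon_n(g)=m\bmod n\}$ is an isomorphism.
\begin{itemize}
\item Surjectivity: given $(g,m)\in P$, lift $g$ to some $h$. Then $\varepsilon(h)\equiv m\bmod n$, so $\varepsilon(h)=m+kn$ for some $k\in\mathbb{Z}$. Replace $h$ by $h(\mu_1\cdot\ldots\cdot\mu_n)^{-k}$; this does not change $i_*(h)$ and gives $\varepsilon$-value exactly $m$.
\item Injectivity: if $\Phi(h)=1$, then $h\in\ker i_*$, so $h=(\mu_1\cdot\ldots\cdot\mu_n)^k$. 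Now $\varepsilon(h)=kn=0$ forces $k=0$.
\end{itemize}
The main point to get right is that $\ker i_*$ is exactly the cyclic subgroup and not something bigger. This holds because $\mu_1\cdot\ldots\cdot\mu_n$ is central, so its normal closure coincides with the cyclic subgroup it generates.

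\emph{Irreducible case.} If $\bar C$ is irreducible, $H_1(\bc^2\setminus C)\cong\mathbb{Z}$ and $H_1(\bp^2\setminus\bar C)\cong\mathbb{Z}/n$, generated by a meridian. Since all $\mu_i$ are meridians of the single component, they become equal in homology, so $\varepsilon$ and $\varepsilon_n$ are the abelianization maps. Purely group-theoretically, this follows from (ZvK1)(a): the orbit of $\mu_1$ under $G(\boldsymbol\tau)$ meets every $\mu_k$ up to conjugacy (transitivity of the permutation image), so the abelianization is generated by one class. I would either argue this, or simply cite the standard homology computation via Lefschetz duality/Alexander duality.

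Once the two vertical maps are canonical, the pull-back $P$ is determined by $\pi_1(\bp^2\setminus\bar C)$ together with its abelianization and $n$. Therefore $\pi_1(\bc^2\setminus C)\cong P$ is determined as well.
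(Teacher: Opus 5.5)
Your proof is correct, and it is the argument the paper leaves implicit: the proposition is stated without proof, as a direct consequence of (ZvK1)(b) and (d). Centrality of $\mu_1\cdot\ldots\cdot\mu_n$ makes $\ker i_*$ the cyclic subgroup it generates, and your surjectivity/injectivity check for $(i_*,\varepsilon)$ is exactly the pull-back property (it even shows $\ker i_*\cong\mathbb{Z}$).

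The one step to state more carefully is the last one. The pull-back needs the identification $H_1(\bp^2\setminus\bar{C})\cong\mathbb{Z}/n$ that sends the (topologically canonical) meridian class to $1$, not just the abstract abelianization. For example, for $G=\mathbb{Z}/11\rtimes_3\mathbb{Z}/5$, sending the generator of $\mathbb{Z}/5$ to $1$ or to $2$ gives the non-isomorphic pull-backs $\mathbb{Z}/11\rtimes_3\mathbb{Z}$ and $\mathbb{Z}/11\rtimes_5\mathbb{Z}$. So ``determines'' should be read as ``determines together with the meridian class'', which is also the sense in which the paper's statement holds.
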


\begin{remark}
If $\bar{C}$ is irreducible and $\varepsilon^{-1}(n\mathbb{Z})\cap\ctr(\pi_1(\bc^2\setminus C))$ is cyclic,
then $\pi_1(\bp^2\setminus\bar{C})$ can be recovered from $\pi_1(\bc^2\setminus C)$. 
\end{remark}

\begin{remark}
One last word about the genericity of braid monodromy factorizations. In general, it will be possible to 
obtain a generic braid monodromy from a non-generic one, which are usually computed in an easier way.
On the other side, the consideration of non-generic braid monodromies produces in some cases finer
invariants of the topology.
\end{remark}
 \section{Applications}\label{sec:app}

Braid monodromy factorizations (in both affine and generic cases) determine the topology of a plane curve, including its embedding in the plane.
It is useful in order to detect Zariski pairs, since it is a main tool for the computation of the fundamental group; this invariant (and related ones as the Alexander polynomial) has been widely used to find Zariski pairs, starting from the original example of Zariski (and many others).
But there are results showing that it is finer, see e.g.~\cite{acc:07,accm:05}, where fundamental groups are isomorphic but some special braid monodromies are not Hurwitz equivalent. The main result in~\cite{acc:03} basically implies that if two curves with the same combinatorics have non-Hurwitz equivalent
braid monodromies, then, by adding the non-transversal lines through the base point, we obtain Zariski pairs. Actually, we obtain that the 
space $\Sigma$ of the curves with this combinatorics is not connected (a consequence of Proposition~\ref{prop:isotopy}).

As it was pointed out in the Introduction, computing braid monodromies is usually a difficult task. Once they are computed, checking if they are 
Hurwitz equivalent is also hard. In \cite{acc:07,accm:05} it was achieved using finite representations of the braid group and replacing the Hurwitz
action on $(\mathbb{B}_n)^r$ by the action of $G\times\mathbb{B}_r$ on $G^r$, where $G$ is a finite group. Note that the target
of these representations
must be bigger than the corresponding symmetric groups, so, it becomes very hard when $n$ is big, e.g., greater than~$6$.

There are techniques to obtain braid monodromies of a curve if we know one braid monodromy factorization for a simple one. Let 
us illustrate it with Kummer covers, maps $\Psi_m:\bp^2\to\bp^2$ given by 
$\Psi_m([X:Y:Z])=[X^m:Y^m:Z^m]$. Given a projective curve $\bar{C}$ of degree~$d$ (the axes are not contained in the curve), $\bar{C}_m:=\Psi_m^{-1}(\bar{C})$ is a curve of degree~$n:=d\cdot m$. For the sake of simplicity, let us assume that $\bar{C}$ is transversal to the axes.
The following result will be useful.

\begin{prop}[\cite{o_s:78}]
Let $C_1,C_2\subset\bc^2$ be two affine curves of degrees~$n_1,n_2$, respectively, and such that $C_1\cap C_2$ contains exactly $n_1\cdot n_2$
points. Then
\[
\pi_1(\bc^2\setminus(C_1\cup C_2))\cong
\pi_1(\bc^2\setminus C_1)\times\pi_1(\bc^2\setminus C_2).
\]
\end{prop}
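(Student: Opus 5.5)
I would prove this Oka--Sakamoto statement with the Zariski--van Kampen machinery of \S\ref{sec:global}, choosing coordinates adapted to both curves at once. First fix the geometry. Take the closures $\bar C_1,\bar C_2\subset\bp^2$. Since $\#(C_1\cap C_2)=n_1n_2$, Bézout forces every intersection to be affine and transversal, so $\bar C_1\cap\bar C_2\cap L_\infty=\emptyset$. Choose a generic point $P\in L_\infty$ and project from it, so that $f=f_1f_2$ is monic in $y$ and the genericity conditions of \S\ref{sec:proj} hold for $\bar C_1\cup\bar C_2$ away from the nodes of $C_1\cap C_2$.

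\textbf{Key construction.} Deform, preserving topology, to coordinates in which the curves are \emph{separated in the fibre}. Because the intersection points are transverse, we may apply a linear change and an isotopy through curves of the same combinatorics. By Proposition~\ref{prop:isotopy}, this does not change the topology. After it, every special fibre of $C_1$ and of $C_2$ meets the other curve generically, with the $n_1$ points of $C_1$ and the $n_2$ points of $C_2$ kept in disjoint disks.

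The cleaner approach, as in the original argument, is to avoid this and use a presentation directly.
\begin{enumerate}[label=\rm(\arabic{enumi})]
\item Take a geometric basis $\mu_1,\dots,\mu_{n_1}$ for $C_1$ and $\nu_1,\dots,\nu_{n_2}$ for $C_2$ in a generic fibre, with $C_1$'s points in one disk and $C_2$'s in a disjoint one.
\item The braid monodromy factorization of $C_1\cup C_2$ splits into three kinds of factors.
\begin{itemize}
\item Factors coming from $C_1$ alone act only on the first disk, up to conjugation. They give exactly the relations of $\pi_1(\bc^2\setminus C_1)$, conjugated by elements involving the $\nu_j$.
\item Factors coming from $C_2$ alone behave symmetrically.
\item The $n_1n_2$ transversal nodes of type $\sigma^2$ give commutation relations $[\mu^{w},\nu^{w}]=1$.
\end{itemize}
\end{enumerate}

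\textbf{Main step.} Show that these node relations, together with the other relations, imply $[\mu_i,\nu_j]=1$ for all $i,j$. Once all $\mu$'s commute with all $\nu$'s, the conjugations by words in the other family disappear. The relations then reduce to the separate presentations, giving the direct product.

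For this main step I would use a degeneration argument of Oka--Sakamoto type. Take $C_2$ and consider the family $\{t\cdot C_2\}$, or equivalently translate $C_2$ far away by a generic linear automorphism. The condition of $n_1n_2$ transverse points is open and connected, so by a Zariski-openness argument it can be preserved along the family. This makes the topology of the union constant. In the limit, a generic line $\ell$ meets $C_1$ in a small region and $C_2$ in a far region. One then shows that the $\mu_i$ (lying near $C_1$) commute with the $\nu_j$ using the nodes reached by loops in the fibre.

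Concretely, via the Zariski theorem quoted in the Introduction, $\pi_1$ of the complement is a quotient of the free group on a generic line $\ell$. Consider the meridians $\mu_i,\nu_j$ on $\ell$. For each pair $(i,j)$, move $\ell$ inside the pencil through a point of $C_1\cap C_2$. The corresponding node relation says that the particular meridians $\mu_i$ and $\nu_j$ carried there commute. Transitivity of monodromy then covers all pairs, by irreducibility-free counting: each branch pair meets in exactly the right number of points.

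\textbf{Expected obstacle.} The main difficulty is controlling the conjugating words $\alpha$ in the node relations, so that the commutation holds for the standard $\mu_i,\nu_j$ and not just for conjugates. I would first try induction over the nodes ordered along a path, using the earlier commutations to strip the conjugations.
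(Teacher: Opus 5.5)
The paper gives no proof of this statement; it only quotes \cite{o_s:78}. So your plan has to stand on its own, and it does not: the step that carries the content of the theorem is left unproved.

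Your final reduction is fine. Once $[\mu_i,\nu_j]=1$ for all $i,j$, forgetting the $C_2$-strands turns a braid monodromy factorization of $C_1\cup C_2$ into one of $C_1$ (node factors and $C_2$-factors become trivial), and symmetrically. The relations then collapse to presentations of the two factors.

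The commutation itself, however, is never established. A node factor, i.e.\ a conjugate of some $\sigma_t^2$, only says that two \emph{particular} meridians commute. These are $w^{-1}\mu_iw$ and $w'^{-1}\nu_jw'$, where $w,w'$ are words in all the generators dictated by the path from the base fibre to the node, and in general $w\neq w'$. Unless you control $w$ and $w'$, such a relation does not give $[\mu_i,\nu_j]=1$. Neither the count ``one node per pair'' nor ``transitivity of monodromy'' removes them: transitivity holds only for irreducible curves, and it only ever produces further conjugates. The proposed induction over the nodes is not an argument either, since nothing in your setup guarantees that later conjugators involve only commutations already obtained.

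Notice also that you derive from B\'ezout that $\bar C_1\cap\bar C_2\cap L_\infty=\emptyset$ and never use it again. The hypothesis enters your sketch only through the number of nodes. Yet this separation at infinity, together with transversality, is exactly what a proof must exploit to control the conjugators.

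The preliminary ``key construction'' is not justified as written. Proposition~\ref{prop:isotopy} concerns projective curves together with a generic pair $(L,P)$, whereas here $L_\infty$ is fixed and need not be transversal to $\bar C_1\cup\bar C_2$. Its goal is nevertheless attainable, and doing so shows where the real work lies:
\begin{itemize}
\item A vertical translation $y\mapsto y+s$ of $C_2$ fixes $L_\infty$ pointwise. For all but finitely many $s$ it keeps $n_1n_2$ transverse intersection points, so the family is equisingular, also at infinity, and the topology does not change.
\item For $|s|\gg0$, the critical values of $C_1$ and of $C_2$ do not move and lie in a fixed disk. Over that disk the two groups of points stay in disjoint disks, so those factors give exactly the relations of the two groups.
\item All the nodes, however, escape to $x\approx s/(a_k-b_l)$, where $a_k\neq b_l$ are the slopes of the points at infinity of $C_1$ and $C_2$. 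Determining their conjugators there is precisely the step your proposal skips.
\end{itemize}

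If you want to avoid that braid computation, there is a Lefschetz-type route. The diagonal $\Lambda=\{(z,z)\}\subset\bc^2\times\bc^2$ meets $V=(C_1\times\bc^2)\cup(\bc^2\times C_2)$ in a copy of $C_1\cup C_2$, and $\bc^4\setminus V=(\bc^2\setminus C_1)\times(\bc^2\setminus C_2)$. Transversality of $C_1$ and $C_2$, together with $\bar C_1\cap\bar C_2\cap L_\infty=\emptyset$, is exactly what makes $\bar\Lambda\subset\bp^4$ transverse to a Whitney stratification of $\bar V\cup(\bp^4\setminus\bc^4)$. The Zariski theorem of Lefschetz type (Hamm--L\^e), in its version for $2$-planes transverse to a Whitney stratification, then gives the isomorphism.
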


\begin{cor}
Let $\bar{C}\subset\bp^2$ be a curve transversal to the union of the three axes, and let $G$ be the fundamental group of the complement
of $\bar{C}\cup\{Z=0\}$. Then, the fundamental group of the complement
of $\bar{C}\cup\{X\cdot Y\cdot Z=0\}$ is isomorphic to $G\times\mathbb{Z}^2$. There are two meridians of $X=0$ and $Y=0$
which are the generators of $\mathbb{Z}^2$.
\end{cor}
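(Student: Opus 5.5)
We will apply the Oka--Sakamoto result twice, in suitable affine charts, taking $\{Z=0\}$ as the line at infinity.

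Set $U:=\bp^2\setminus\{Z=0\}\cong\bc^2$ with affine coordinates $x=X/Z$, $y=Y/Z$, and let $C:=\bar{C}\cap U$, an affine curve of degree~$d$. The complement of $\bar{C}\cup\{X\cdot Y\cdot Z=0\}$ is $\bc^2\setminus(C\cup\{x=0\}\cup\{y=0\})$. Let $C_1:=C$, of degree $n_1=d$, and $C_2:=\{x\cdot y=0\}$, of degree $n_2=2$.

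The first step is to check that $\#(C_1\cap C_2)=2d$. Since $\bar{C}$ is transversal to the union of the three axes, it meets each of $X=0$ and $Y=0$ in $d$ distinct points. None of these points lies on $Z=0$, and none is the vertex $[0:0:1]$; transversality to the union of the axes excludes the vertices, because at a vertex the curve cannot be transversal to the two intersecting lines simultaneously in the required sense. So both sets of $d$ points lie in $U$ and are disjoint, giving $2d$ affine points. The Oka--Sakamoto proposition then yields
\[
\pi_1(\bc^2\setminus(C\cup\{xy=0\}))\cong\pi_1(\bc^2\setminus C)\times\pi_1(\bc^2\setminus\{xy=0\}).
\]
The first factor is $G$, because $\bc^2\setminus C=\bp^2\setminus(\bar{C}\cup\{Z=0\})$. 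The second factor is $\pi_1((\bc^*)^2)=\mathbb{Z}^2$, generated by the meridians of $x=0$ and $y=0$. One could also apply Oka--Sakamoto a second time, to the two lines $x=0$ and $y=0$ which meet in one point, to obtain $\mathbb{Z}\times\mathbb{Z}$.

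The main obstacle is to make sure that the isomorphism is induced by the inclusions, so that the $\mathbb{Z}^2$ factor is really generated by meridians of $X=0$ and $Y=0$ in the big complement. For this we use that the Oka--Sakamoto isomorphism is given by the map $\pi_1(\bc^2\setminus(C_1\cup C_2))\to\pi_1(\bc^2\setminus C_1)\times\pi_1(\bc^2\setminus C_2)$ induced by the two inclusions. A meridian of $\{x=0\}$ in the total complement maps to $(1,\text{meridian})$, since it bounds a small disk transversal to $x=0$ that misses $C$. The same holds for $\{y=0\}$. These meridians therefore map to the standard generators of $\{1\}\times\mathbb{Z}^2$, and the statement follows.
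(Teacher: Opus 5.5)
Your argument is correct and is the intended one: the paper gives no separate proof and presents this as an immediate consequence of the Oka--Sakamoto proposition, applied in $\bp^2\setminus\{Z=0\}$ to $C_1=\bar{C}\cap\bc^2$ and $C_2=\{xy=0\}$, together with $\pi_1(\bc^2\setminus\{xy=0\})\cong\mathbb{Z}^2$ generated by meridians; your remark that the isomorphism is induced by the inclusions is what justifies the claim about the two meridians. The one soft spot is your reason for excluding the vertices: a smooth branch through a vertex can be transversal to both axes there, so the exclusion should instead be read as part of the hypothesis, with ``transversal to the union'' meaning $\#(\bar{C}\cap\{XYZ=0\})=3d$, in line with how the paper uses transversality for lines.
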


Using covering theory, the following result is not difficult; it is based on the ideas appearing in \cite{Uludag:2001}.

\begin{prop}
With the above transversality hypotheses, the fundamental group of
the complement of $\bar{C}_m\cup\{X\cdot Y\cdot Z=0\}$ is isomorphic to a subgroup of the fundamental
group of the complement of $\bar{C}\cup\{X\cdot Y\cdot Z=0\}$. Under the above isomorphism it is $G\times (m\mathbb{Z})^2$
and there are two meridians of $X=0$ and $Y=0$
which are the generators of $(m\mathbb{Z})^2$.
\end{prop}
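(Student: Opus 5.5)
The plan is to realize $\Psi_m$ as an unramified covering over the complement of the axes and then use covering theory. Let $U:=\bp^2\setminus(\bar{C}\cup\{XYZ=0\})$ and $U_m:=\bp^2\setminus(\bar{C}_m\cup\{XYZ=0\})$. Since $\Psi_m^{-1}(\{XYZ=0\})=\{XYZ=0\}$ and $\Psi_m^{-1}(\bar{C})=\bar{C}_m$, the map $\Psi_m$ restricts to $U_m\to U$. On the torus $(\bc^*)^2=\bp^2\setminus\{XYZ=0\}$ (affine coordinates $x=X/Z$, $y=Y/Z$) it is $(x,y)\mapsto(x^m,y^m)$, an unramified Galois covering with group $(\mathbb{Z}/m)^2$. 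Restricting to the complement of $\bar C$, we get an unramified covering $U_m\to U$ of degree $m^2$. Hence $(\Psi_m)_*$ injects $\pi_1(U_m)$ onto a subgroup of index $m^2$ of $\pi_1(U)$. This subgroup is normal, since the covering is Galois, being the pull-back of a Galois one.

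The next step is to identify this subgroup. By the previous Corollary, $\pi_1(U)\cong G\times\mathbb{Z}^2$, where the $\mathbb{Z}^2$ is generated by meridians $\gamma_X,\gamma_Y$ of $X=0$ and $Y=0$. The subgroup $(\Psi_m)_*\pi_1(U_m)$ is the kernel of the monodromy morphism $\rho:\pi_1(U)\to(\mathbb{Z}/m)^2$ of the covering. This $\rho$ factors through the inclusion $U\hookrightarrow(\bc^*)^2$, i.e., through $\pi_1((\bc^*)^2)=\mathbb{Z}^2$ followed by reduction mod $m$.

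So I must compute the image of each factor in $H_1((\bc^*)^2)$. The elements $\gamma_X,\gamma_Y$ map to the standard basis, since a meridian of $X=0$ links it once and $Y=0$ zero times. For an element $g\in G$, I use that $G$ is the fundamental group of $\bp^2\setminus(\bar C\cup\{Z=0\})$, embedded via the splitting of the previous Corollary. The key claim is that the inclusion $U\to\bp^2\setminus(\bar C\cup\{Z=0\})$ composed with this splitting is compatible with the map to $(\bc^*)^2$ in the sense that $G$ maps trivially. To justify it, I would use the Oka--Sakamoto splitting: the $G$ factor can be represented by loops in a region where $x$ and $y$ stay in a small disk around a generic point, far from the axes, e.g. a tubular neighbourhood where the transversality lets us push loops away. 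Concretely, generators of $G$ are meridians of $\bar C$ and loops around $Z=0$. A meridian of $\bar C$ taken near a point of $\bar C$ off the axes has trivial linking with $X=0$ and $Y=0$.

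This is the main obstacle: the loop around $Z=0$ also contributes. In the affine chart its class in $H_1((\bc^*)^2)$ is $-(1,1)$-like, so the naive $G$ factor does not map trivially. I would handle this by adjusting the splitting, replacing each generator $g$ by $g\,\gamma_X^{-a(g)}\gamma_Y^{-b(g)}$, where $(a(g),b(g))$ is its image in $\mathbb{Z}^2$. This is still a homomorphic section because $\gamma_X,\gamma_Y$ are central. After this change $G$ lies in $\ker\rho$, and $\ker\rho=G\times(m\mathbb{Z})^2$, generated by $\gamma_X^m,\gamma_Y^m$.

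Finally, $\gamma_X^m$ is the image of a meridian of $X=0$ in $U_m$. Indeed, near a generic point of $X=0$ the covering is $x\mapsto x^m$, so a small loop around $X=0$ upstairs maps to the $m$-th power of a meridian downstairs. The same holds for $Y$, which gives the last assertion.
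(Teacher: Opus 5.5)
Your proof is correct and is the covering-theory argument the paper only alludes to, without details: $U_m\to U$ is the pull-back of the Galois $(\mathbb{Z}/m)^2$-covering $(x,y)\mapsto(x^m,y^m)$ of $(\bc^*)^2$. Hence $\pi_1(U_m)$ is the preimage of $(m\mathbb{Z})^2$ under $\pi_1(U)\to\pi_1((\bc^*)^2)$, and the meridians of the axes upstairs map to $m$-th powers of the meridians downstairs.

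The \emph{main obstacle} you describe does not actually arise. The Oka--Sakamoto isomorphism is the one induced by the inclusions $U\hookrightarrow\bc^2\setminus C$ and $U\hookrightarrow(\bc^*)^2$, so its $G$-factor is already $\ker(\pi_1(U)\to\pi_1((\bc^*)^2))$. The meridian of $Z=0$ is not an extra generator of $G$, since there it equals $(\mu_1\cdots\mu_d)^{-1}$. Your recentering by the central elements $\gamma_X,\gamma_Y$ therefore just recovers this splitting; it is harmless, and it also works for any abstract splitting whose $\mathbb{Z}^2$-factor is generated by the axis meridians.
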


The following result is a consequence of \eqref{eq:pb}.

\begin{prop}
With the above transversality hypotheses, there is a central short exact sequence
\[
0\to\mathbb{Z}/m\to\pi_1(\bp^2\setminus\bar{C}_m)\to\pi_1(\bp^2\setminus\bar{C})\to 1.
\]
\end{prop}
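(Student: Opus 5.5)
Write $L=\{Z=0\}$, and take $Z=0$ as the line at infinity, so that $C=\bar C\cap\bc^2$ and $C_m=\bar C_m\cap\bc^2$. The plan is to read $\pi_1(\bp^2\setminus\bar C_m)$ off the pull-back diagram \eqref{eq:pb} applied to both $\bar C$ and $\bar C_m$, and to compare the two affine groups via the previous two propositions.

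\emph{Affine groups.} Put $G=\pi_1(\bc^2\setminus C)$ and $G_m=\pi_1(\bc^2\setminus C_m)$. By the Corollary, $\pi_1(\bp^2\setminus(\bar C\cup\{XYZ=0\}))\cong G\times\mathbb{Z}^2$, with the $\mathbb{Z}^2$ factor generated by meridians $a,b$ of $X=0$ and $Y=0$. By the last Proposition, the corresponding group for $\bar C_m$ is the subgroup $G\times(m\mathbb{Z})^2$. Removing the axes $X=0$, $Y=0$ only kills their meridians, so $G_m$ is the quotient of $G\times(m\mathbb{Z})^2$ by the normal closure of meridians of the axes. Under $\Psi_m$, these meridians are $a^m$ and $b^m$, possibly up to conjugation by elements of $G$, which is harmless since $a,b$ are central. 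Hence $G_m\cong G$, and the identification is induced by $\Psi_m$.

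\emph{Passing to projective groups via \eqref{eq:pb}.} By \eqref{eq:pb}, $\pi_1(\bp^2\setminus\bar C)$ is the pull-back $G\times_{\mathbb{Z}}\mathbb{Z}/d$ along $\varepsilon:G\to\mathbb{Z}$, and $\pi_1(\bp^2\setminus\bar C_m)$ is $G_m\times_{\mathbb{Z}}\mathbb{Z}/(dm)$. The key check is that, under $G_m\cong G$, the map $\varepsilon_m:G_m\to\mathbb{Z}$ corresponds to $\varepsilon$. This holds because the total linking number with $L$ is preserved: a meridian of $C_m$ maps to a meridian of $C$, since $\Psi_m$ is unramified off the axes. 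The natural map $\mathbb{Z}/dm\to\mathbb{Z}/d$ then induces a surjection
\[
\pi_1(\bp^2\setminus\bar C_m)\cong G\times_{\mathbb{Z}}\mathbb{Z}/dm\longrightarrow G\times_{\mathbb{Z}}\mathbb{Z}/d\cong\pi_1(\bp^2\setminus\bar C).
\]
Its kernel consists of the pairs $(1,k)$ with $k\in d\mathbb{Z}/dm\mathbb{Z}\cong\mathbb{Z}/m$. It is central because it lies in the image of $\ker i_*$, which \eqref{eq:pb}'s proposition shows is central; equivalently, it is generated by the class of $(\mu_1\cdots\mu_n)^d$, a power of the central element from (ZvK1)(b). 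This gives the central short exact sequence.

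\emph{Main obstacle.} The delicate step is the identification $G_m\cong G$ compatible with $\varepsilon$. It requires checking that the meridians of the axes in the covering correspond exactly to $a^m$ and $b^m$, which is where transversality is used, and that no extra relations appear in passing from $\bar C_m\cup\{XYZ=0\}$ to $C_m$. I would verify this by explicit covering-space bookkeeping, treating $\Psi_m$ as the $(\mathbb{Z}/m)^2$-cover associated with $G\times\mathbb{Z}^2\to(\mathbb{Z}/m)^2$, $(g,u,v)\mapsto(u,v)\bmod m$. One also has to confirm that the fibre-product description is injective, i.e.\ that the kernel is exactly $\mathbb{Z}/m$ and not a proper quotient of it. This follows from the pull-back property, because the pair $(1,d)$ has order exactly $m$ in $\mathbb{Z}/dm$.
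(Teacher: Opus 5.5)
Your affine step is sound, modulo wording: you \emph{fill in} the axes, which kills the central elements $a^m,b^m$. It gives an isomorphism $\Psi_{m*}\colon\pi_1(\bc^2\setminus C_m)\to G$ with $\varepsilon\circ\Psi_{m*}=\varepsilon_m$.

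The gap is in the passage to projective groups. Diagram~\eqref{eq:pb} says that the \emph{affine} group is a fibre product, $G\cong\pi_1(\bp^2\setminus\bar C)\times_{\mathbb{Z}/d}\mathbb{Z}$. It does not present the projective group as ``$G\times_{\mathbb{Z}}\mathbb{Z}/d$'', and that expression has no content. The only homomorphism $\mathbb{Z}/d\to\mathbb{Z}$ is trivial, and if you read the expression as the pairs $(g,k)$ with $\varepsilon(g)\equiv k\bmod d$, you just get $G$ back. So your surjection ``induced by $\mathbb{Z}/dm\to\mathbb{Z}/d$'' and your kernel of ``pairs $(1,k)$'' are not computing anything.

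What is true is $\pi_1(\bp^2\setminus\bar C)=G/\langle c\rangle$, where $c=\mu_1\cdot\ldots\cdot\mu_d$ is the central inverse meridian of $L$. This quotient is not determined by $(G,\varepsilon)$ alone. For two lines, $G=\mathbb{Z}^2$ with $\varepsilon(u,v)=u+v$; killing the central elements $(1,1)$ or $(2,0)$, both with $\varepsilon=2$, gives $\mathbb{Z}$ or $\mathbb{Z}\oplus\mathbb{Z}/2$. Hence the $\varepsilon$-compatibility you single out as the key check cannot by itself produce the sequence.

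The missing idea is to follow the meridian of $L=\{Z=0\}$ through $\Psi_m$. Let $c_m$ be the product of the $dm$ meridians of $C_m$; it is central and generates the kernel of $\pi_1(\bc^2\setminus C_m)\to\pi_1(\bp^2\setminus\bar C_m)$. You need $\Psi_{m*}(c_m)=c^m$, and there are two ways to get it.
\begin{enumerate}[label=\rm(\roman{enumi})]
\item $\Psi_m^{-1}(L)=L$ with ramification index $m$, which gives $\Psi_{m*}(c_m)=c^m$ directly.
\item $\Psi_m$ also maps $\bp^2\setminus\bar C_m$ into $\bp^2\setminus\bar C$. Commutativity of the square of inclusions then forces $\Psi_{m*}(c_m)\in\ker\bigl(G\to\pi_1(\bp^2\setminus\bar C)\bigr)=\langle c\rangle$. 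Only at this point does your check $\varepsilon(\Psi_{m*}(c_m))=dm$ pin the element down as $c^m$.
\end{enumerate}
Then $\pi_1(\bp^2\setminus\bar C_m)\cong G/\langle c^m\rangle\to G/\langle c\rangle\cong\pi_1(\bp^2\setminus\bar C)$ is surjective with kernel $\langle c\rangle/\langle c^m\rangle$. This kernel is central because $c$ is, and cyclic of order exactly $m$ because $\varepsilon(c)=d\neq0$ makes $c$ of infinite order.

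Note also that the kernel is generated by the image of $c$, not of $(\mu_1\cdot\ldots\cdot\mu_n)^d$. With $n=dm$, the latter is already trivial in $\pi_1(\bp^2\setminus\bar C_m)$.
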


In general one can find a braid monodromy factorization (maybe non generic) of $\bar{C}_m$ in terms of a braid monodromy factorization of $\bar{C}$,
see~\cite{aco:2014}. In the above case of transversality, it is easier. One can assume that we can take $L=\{Z=0\}$ and $P=[0:1:0]$ (it is enough to deform the axes slightly). Each line through $P$ (different from $X=0$, $Z=0$) has as preimage $m$ lines through~$P$. Each not transversal
line through $P$ for $\bar{C}$ produces $m$ non-transversal lines for $\bar{C}_m$; moreover, if the braid monodromy for $\bar{C}$ is generic,
each one of these lines contains $m$ non-transversal points. Moreover, each one of the $m$ lines passing through $P$ and 
one point of $\bar{C}\cap \{Y=0\}$ is lifted to $m$ non-transversal lines, each one is tangent to $\bar{C}_m$ at a point with intersection
multiplicity~$m$. 

This new braid monodromy is not generic, but it is not hard to deform it into a generic braid monodromy factorization which can be
recovered only in terms of the braid monodromy factorization of~$\bar{C}$. 

Since the eighties the study of the theory of complex analytic and algebraic varieties has been enriched by the study of pseudo-holomorphic and complex symplectic
varieties and one important case is that of symplectic curves in complex surfaces,
namely in $\bp^2$. Symplectic curves 
in~$\bp^2$ also admit braid monodromy factorizations coming from 
pencils of pseudo-holomorphic lines. 

This gives a technique to construct 
symplectic curves. Namely, consider an algebraic plane curve in $\bp^2$
and perform a small deformation where the singular points may be~\emph{broken}
in several points having the topological type of algebraic singularities, e.g., deform 
an ordinary triple point into three ordinary double points. This deformation induces also a deformation of a generic algebraic
pencil of lines into a pseudo-holomorphic pencil of lines from which a 
braid monodromy factorization for the symplectic curve can be obtained. In the previous example,
the braid monodromy factorization of an ordinary triple point is locally given 
by $((\sigma_1\cdot\sigma_2)^3)$; the braid monodromy factorization of the deformed
ordinary double points is $(\sigma_1^2,\sigma^{-1}\cdot\sigma_2^2\cdot\sigma_1,\sigma_2^2)$.
The braid monodromy factorization of the deformed symplectic curve can be
seen as a deformation of the braid monodromy factorization of the algebraic curve. 
We can state the following question: Given 
a symplectic curve, is it isotopic to an algebraic one?

For example, in the case of smooth curves, it is known that any symplectic smooth curve up to degree~$18$
is isotopic to an algebraic one. Probably the first concrete example of an irreducible symplectic curve which cannot be isotopic
to an algebraic one is due to Orevkov.

The details of Orevkov's example can be found in~\cite{Golla-Starkston:22}, but we sketch the construction.
Start with the tricuspidal quartic and 
the tangent lines to the cusps. It is known that these lines are concurrent. Deform them in the symplectic category such that 
they are not concurrent any more, as we did in the example of deformation.
We perform a symplectic standard Cremona transformation on these lines. The result is 
a symplectic curve of degree~$8$, with three singular points with the same
topological type as $v(u^3 - v^5)=0$. Such curves do not exist in the algebraic category
(actually using this argument backwards gives a proof), and, in particular, the constructed curve cannot be isotopic to an algebraic one.
With the same ideas we may consider the symplectic non-Pappus arrangement (obtained by deforming the line joining the three points in Pappus Theorem into a \emph{generic} passing only through two of them, see Figure~\ref{fig:pappus}).

\begin{figure}[ht]
\centering
﻿\begin{tikzpicture}[scale=.625,vertice/.style={draw,circle,fill,minimum size=0.2cm,inner sep=0}]
\coordinate (A1) at (-.5,0);
\coordinate (A2) at (2,0);
\coordinate (A3) at ($-1.5*(A1)+2.5*(A2)$);
\coordinate (B1) at (-1,1.75);
\coordinate (B2) at (2,3.5);
\coordinate (B3) at ($-1*(B1)+2*(B2)$);

\draw[line width=1.5] ($1.75*(A1)-.75*(A3)$)--($-.25*(A1)+1.25*(A3)$);
\draw[line width=1.5] ($2.3*(B1)-1.3*(B3)$)--($-.25*(B1)+1.25*(B3)$);

\node[vertice] at (A1) {};
\node[below=5pt] at (A1) {$A_1$};
\node[vertice] at (A2) {};
\node[below=5pt] at (A2) {$A_2$};
\node[vertice] at (A3) {};
\node[below=5pt] at (A3) {$A_3$};
\node[vertice] at (B1) {};
\node[above=5pt] at (B1) {$B_1$};
\node[vertice] at (B2) {};
\node[above=5pt] at (B2) {$B_2$};
\node[vertice] at (B3) {};
\node[above=5pt] at (B3) {$B_3$};

\draw[name path=L12, line width=1.2] ($1.25*(A1)-.25*(B2)$)--($-.25*(A1)+1.25*(B2)$);
\draw[name path=M12, line width=1.2] ($1.5*(B1)-.5*(A2)$)--($-.25*(B1)+1.25*(A2)$);
\path [name intersections={of=L12 and M12,by=R}];

\node[vertice] at (R) {};

\draw[name path=L13, line width=1.2] ($1.25*(A1)-.25*(B3)$)--($-.25*(A1)+1.25*(B3)$);
\draw[name path=M13, line width=1.2] ($1.5*(B1)-.5*(A3)$)--($-.25*(B1)+1.25*(A3)$);
\path [name intersections={of=L13 and M13,by=Q}];

\node[vertice] at (Q) {};

\draw[name path=L23, line width=1.2] ($1.25*(A2)-.25*(B3)$)--($-.25*(A2)+1.25*(B3)$);
\draw[name path=M23, line width=1.2] ($1.5*(B2)-.5*(A3)$)--($-.25*(B2)+1.25*(A3)$);
\path [name intersections={of=L23 and M23,by=P}];

\node[vertice] at (P) {};

\draw[line width=1.2] ($1.25*(P)-.25*(R)$) to[out=90] ($.75*(P)+.25*(R)$) -- ($4*(R)-3*(P)$);
\node[vertice] at (P) {};
\node[vertice] at (Q) {};
\node[vertice] at (R) {};

\end{tikzpicture}
 \caption{Non-Pappus symplectic arrangement}
\label{fig:pappus}
\end{figure}
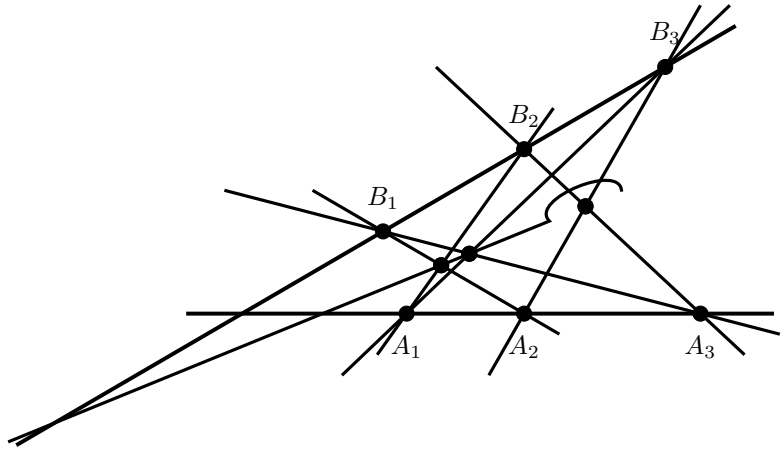

Braid monodromy factorizations are a key invariant to study symplectic curves. In the previous paragraph
Orevkov's example has been described, but there are infinitely many previous examples prior to this one. 
They are in the outstanding paper of B.~Moishezon~\cite{Moi:94}. 
Moishezon gave a general construction, starting from a braid monodromy factorization $\boldsymbol{\tau}$ of a curve~$\bar{C}$ of 
degree~$d$. 

For any $m\geq 3$, he constructs an infinite family of braid monodromy factorizations $\boldsymbol{\tau}_{\boldsymbol{\sigma}}$,
where $\boldsymbol{\sigma}$ are tuples of $\frac{(m-1)\cdot(m-2)}{2}$ braids in $\mathbb{B}_d$.

The main feature of $\boldsymbol{\tau}_{\boldsymbol{\sigma}}$ is that 
its factors have the conjugacy class of the factors of a braid monodromy factorization of $\bar{C}_m$. More precisely,
if $\boldsymbol{\sigma}$ contains only the identity, it can be proved that $G^\aff(\boldsymbol{\tau}_{\boldsymbol{\sigma}})$
is isomorphic to $\pi_1(\bp^2\setminus(\bar{C}_m\cup\{Z=0\}))$, so it is conceivable that they are isotopic (though I do not have 
a proof). Of course, it may happen that this infinite family lives in a Hurwitz orbit.

Besides this general construction, he finds some curves $\bar{C}_d$, starting from a sextic curve with~$9$ cusps, for which he finds
an infinite family of $\boldsymbol{\sigma}$'s such that the groups  $G^\aff(\boldsymbol{\tau}_{\boldsymbol{\sigma}})$
are pairwise non-isomorphic. As a consequence, their braid monodromy factorizations are not Hurwitz-equivalent.
Let $\Sigma$ be the space of algebraic curves with this combinatorics. Since it is a quasi-projective variety, it has at most
a finite number of irreducible components, and thus a finite number of connected components. At most a finite number of 
$\boldsymbol{\tau}_{\boldsymbol{\sigma}}$ can be braid monodromy factorizations of algebraic curves. Hence, he finds an infinite
number of symplectic curves which are non-isotopic to algebraic curves, even though no explicit example is shown.
 
%

\newcommand{\etalchar}[1]{$^{#1}$}
\providecommand{\noopsort}[1]{}
\providecommand{\bysame}{\leavevmode\hbox to3em{\hrulefill}\thinspace}
\providecommand{\MR}{\relax\ifhmode\unskip\space\fi MR }
\providecommand{\MRhref}[2]{%
	\href{http://www.ams.org/mathscinet-getitem?mr=#1}{#2}
}
\providecommand{\href}[2]{#2}

\end{document}